\definecolor{blue}{HTML}{1F77B4}
\definecolor{orange}{HTML}{FF7F0E}
\definecolor{green}{HTML}{2CA02C}
\pgfplotsset{compat=1.14}
\newtheorem{tm}{Theorem}[section]
\newtheorem{prop}[tm]{Proposition}
\newtheorem{lem}[tm]{Lemma}
\newtheorem{rk}[tm]{Remark}
\numberwithin{equation}{section}
\numberwithin{tm}{section}
\title{On the dynamics of an epidemic patch model with mass-action transmission mechanism and asymmetric dispersal patterns}
\author{  Rachidi B. Salako\footnote{rachidi.salako@unlv.edu; Department of Mathematical Sciences, University of Nevada Las Vegas, Las Vegas, USA} \quad and \quad Yixiang Wu\footnote{yixiang.wu@mtsu.edu; Department of Mathematical Sciences, Middle Tennessee State University,
 Murfreesboro, Tennessee 37132, USA} }
\date{}
\begin{document}

\maketitle

\begin{abstract} 
  This paper  examines an epidemic patch model with mass-action transmission mechanism and asymmetric connectivity matrix. Results on the global dynamics of  solutions  and the spatial structures of endemic solutions are obtained. In particular, we show that when the basic reproduction number $\mathcal{R}_0$ is less than one and the dispersal rate of the susceptible population $d_S$ is large, the population would eventually stabilize at the disease-free equilibrium. However,  the disease may persist if $d_S$ is small, even if $\mathcal{R}_0<1$.   In such a scenario, explicit conditions on the model parameters that lead to the existence of multiple endemic equilibria are identified. These results  provide new insights into the dynamics of infectious diseases in multi-patch environments. Moreover,   results in \cite{li2023sis}, which is for the same model but with symmetric connectivity matrix, are generalized and improved. 
\end{abstract}

\noindent{\bf Keywords}: Patch model; Epidemic model;
Asymptotic Behavior; Persistenc; Multiplicity of Equilibria.

\smallskip

{
\noindent{\bf 2020 Mathematics Subject Classification}: 34D05, 34D23, 92D25, 92D30, 37N25}

\section{Introduction} 
Differential equations have long been used to describe the transmission of infectious diseases \cite{1991book,  2008book, brauer2019mathematical, Diekmann2000, martcheva2015introduction}. To model the spatial spread of diseases, various patch (or metapopulation) models have been proposed and analyzed. For instance, Hethcote \cite{hethcote1976qualitative} analyzed the global stability of a two-patch susceptible-infected-susceptible (SIS)   epidemic patch model; Arino and van den Driessche \cite{arino2003multi} computed the basic reproduction number of a multi-city epidemic model; Wang and Zhao \cite{wang2004epidemic} used an SIS patch model to illustrate the importance of dispersal in the spread of disease; Guo, Li and Shuai \cite{guo2006global} developed a graph-theoretical method to construct Lyapunov functions and  show the stability of the endemic equilibrium of multigroup epidemic models. We refer the interested readers to the survey papers \cite{wang2007epidemic,arino2009diseases} for more  works on epidemic patch models. 

In the pioneering work \cite{allen2007asymptotic}, Allen \textit{et al}.  proposed the following SIS   epidemic patch model with standard-incidence transmission mechanism to study the impact of environmental heterogeneity and population movement on the spread of an infectious disease:
\begin{equation}\label{model-si}
\begin{cases}
\displaystyle\frac{d S_i}{dt}=d_S\sum_{j\in\Omega, j\neq i}(L_{ij} S_j-L_{ji} S_i)-\beta_{i} \frac{S_iI_i}{S_i+I_i}+\gamma_{i} I_{i},  & i\in\Omega,\ t>0, \cr 
\displaystyle\frac{d I_{i}}{dt}=d_I\sum_{j\in\Omega, j\neq i}(L_{ij} I_{j}-L_{ji} I_{i})+\beta_{i} \frac{S_iI_i}{S_i+I_i}-\gamma_{i} I_{i}, & i\in\Omega,\ t>0. 
\end{cases}
\end{equation}
Here, the population is supposed to   live in patches (e.g., cities, countries), and $\Omega=\{1, 2, \dots, n\}$ is the collection of patch labels, where $n\ge 2$; $S_i$ and $I_i$ are the number of susceptible and infected people in patch $i$, respectively; $\beta_i$ and $\gamma_i$ are disease transmission and recovery rates, respectively; $L_{ij}$, $i\neq j$, is the degree of movement of individuals from patch $j$ to patch $i$; $d_S$ and $d_I$ are the dispersal rates of susceptible and infected individuals, respectively. In \cite{allen2007asymptotic}, the authors defined a basic reproduction number $\mathcal{R}_0$ and showed that $\mathcal{R}_0$ serves as a threshold parameter: the disease free equilibrium (DFE) is globally asymptotically stable if $\mathcal{R}_0<1$, and the model has a unique endemic equilibrium (EE) if $\mathcal{R}_0>1$.
Most importantly, they showed that the disease component of the endemic equilibrium approaches zero if $d_S\to 0$ under the assumption that  $\{i\in\Omega: \beta_i-\gamma_i<0\}$ and $\{i\in\Omega: \beta_i-\gamma_i>0\}$ are non-empty. Biologically, this suggests that one may control the disease by limiting the movement of susceptible people. Later, Li and Peng \cite{li2019dynamics} studied the asymptotic profiles of the EE as $d_S$ and/or $d_I$ approach zero; Chen \textit{et al.} \cite{chen2020asymptotic} revisited model \eqref{model-si} without assuming that the connectivity matrix $L$ is symmetric; the monotonicity of $\mathcal{R}_0$ in $d_I$ has been studied in \cite{gao2020does,gao2020fast, chen2020asymptotic}; The impact of dispersal rates on the number of infected cases was investigated in \cite{gao2020does,gao2021impact}.

In this paper, we consider the following   epidemic patch model with mass-action transmission mechanism: 
\begin{equation*}
\begin{cases}
\displaystyle\frac{d S_i}{dt}=d_S\sum_{j\in\Omega, j\neq i}(L_{ij} S_j-L_{ji} S_i)-\beta_{i} {S_iI_i}+\gamma_{i} I_{i},  & i\in\Omega,\ t>0, \cr 
\displaystyle\frac{d I_{i}}{dt}=d_I\sum_{j\in\Omega, j\neq i}(L_{ij} I_{j}-L_{ji} I_{i})+\beta_{i} {S_iI_i}-\gamma_{i} I_{i}, & i\in\Omega,\ t>0.
\end{cases}
\end{equation*}
All the parameters and variables carry the same biological meaning with model \eqref{model-si} except that the standard-incidence transmission mechanism $\beta_i S_iI_i/(S_i+I_i)$ is replaced by the mass-action transmission mechanism $\beta_i S_i I_i$. 
For simplicity, let $L_{ii}=-\sum_{j\in\Omega, j\neq i}L_{ji}$, which is the total degree of movement out from patch $i$. Then the model can be written as follows:
\begin{equation}\label{model-mass-action}
\begin{cases}
\displaystyle\frac{d S_i}{dt}=d_S\sum_{j\in\Omega}L_{ij} S_j-\beta_{i} {S_iI_i}+\gamma_{i} I_{i},  & i\in\Omega,\ t>0, \cr 
\displaystyle\frac{d I_{i}}{dt}=d_I\sum_{j\in\Omega}L_{ij} I_{j}+\beta_{i} {S_iI_i}-\gamma_{i} I_{i}, & i\in\Omega,\ t>0.
\end{cases}
\end{equation}
This model has been studied in \cite{li2023sis} when the connectivity matrix $\mathcal{L}=(L_{ij})$ is symmetric. In particular, the authors defined a basic reproduction number $\mathcal{R}_0$ and studied the global dynamics of the model if $d_S=d_I$ or $(\beta_1, \dots, \beta_n)$ is a multiple of $(\gamma_1, \dots,\gamma_n)$. They proved the existence of  EE when $\mathcal{R}_0>1$ and showed that it was unique with some additional assumptions, and they  studied the asymptotic profiles of the EE as $d_S$ and/or $d_I$ approach zero.

If \eqref{model-mass-action} has only one patch, then it is the following classic ordinary differential equation epidemic model by Kermack and McKendrick \cite{kermack1927contribution}:
\begin{equation} \label{1-patch-model}
\begin{cases}
S'=-\beta SI+\gamma  I,\\
I'=\beta SI-\gamma  I,\\
S+I=N.
\end{cases}
\end{equation}
For this model, one can define a basic reproduction number $R_0=N\beta/\gamma$, which is the average number of cases one infected case generates after it is introduced to a population without the disease. The basic reproduction number of \eqref{1-patch-model} serves as a threshold value as it does for many epidemic models: if $R_0<1$, the  DFE $(N,0)$ of \eqref{1-patch-model} is globally asymptotically stable; if  $R_0>1$, the model \eqref{1-patch-model} has a unique globally stable  EE $(\gamma/\beta, N-\gamma/\beta)$. We may expect that such a threshold result is true for the patch model \eqref{model-mass-action}. However, we will show that it is possible for \eqref{model-mass-action} to have multiple EEs when $\mathcal{R}_0<1$ under certain conditions on the parameters. Our results indicate that the disease dynamics can be more complicated in a patchy environment.

The objective of the current paper is two fold. Firstly, we will obtain and improve the results in \cite{li2023sis}  without the assumption that the matrix $L$ is symmetric. If $L$ is not symmetric, many techniques (e.g., construction of  Lyapunov functions)  adopted in \cite{li2023sis} cannot be carried over. So we will develop novel techniques to overcome the asymmetry of $L$, which may be applied to study similar patch models. Some of our global stability results on the DFE seem to be new even in the symmetric case. Secondly, we will study the uniqueness/non-uniqueness of the EE solutions. As shown in Theorem \ref{theorem-ds}, using $d_S$ as a bifurcation parameter, it is possible that when $\mathcal{R}_0<1$ the model has multiple EE solutions if $d_S$ is small, while there is no EE solutions and the DFE is globally asymptotially stable if $d_S$ is large. Our result also suggests that models with mass-action mechanism may have very different behavior compared with models with  standard incidence mechanism (in model \ref{model-si}, the DFE is always globally asymptotically stable when $\mathcal{R}_0<1$). We refer the readers to the last section for  more discussions about the novelty and implications of our results. 

We remark that there are many recent works on reaction-diffusion epidemic models that are related to models \eqref{model-si} and \eqref{model-mass-action}. In particular, the reaction-diffusion counterpart of \eqref{model-si} has been analyzed in \cite{Allen,CuiLamLou,CuiLou,KuoPeng,Li2018, li2020dynamics,LouSalako2021,LSS2023,peng2021global,Peng2013, PengZhao,Tuncer2012}, while the reaction-diffusion version of \eqref{model-mass-action} has been studied in \cite{CS2023b, DengWu, salako2023, WuZou,tao2023analysis,wen2018asymptotic,castellano2022effect,peng2023novel}. Although the results for the patch and reaction-diffusion models share many similarities, the techniques for the patch models especially when the connectivity matrix is not symmetric can be very different from those for the reaction-diffusion models.  

The rest of our paper is organized as follows. In section 2, we introduce the notations and present some preliminary results. In section 3, we study the global stability of the model and the existence and uniqueness/non-uniqueness of the EE solutions. In section 4, we study the asymptotic profiles of the EE solutions as $d_S$ and/or $d_I$ approach zero. In section 5, we give some concluding remarks. 

\section{Preliminaries}\label{Sec2}

 \subsection{Notations, assumptions, and definitions} 
Throughout the paper, given a real number $x$, we use the conventional notations $x_+=\max\{x,0\}$ and $x_{-}:=\max\{0,-x\}$ so that $x=x_+-x_{-}$. A bold letter always represents  a column vector in $\mathbb{R}^n$, and its no-bold form with a numeric subscript will be a component of it. For example, for any $\bm X\in\mathbb{R}^n$, one has $\bm X=(X_1, \dots, X_n)^T$, where $X_j\in\mathbb{R}$ for  $j\in\Omega$. 
For convenience, we write $(0, \dots, 0)^T$ as ${\bm 0}$ and $(1, \dots, 1)^T$ as ${\bm 1}$. For $\bm X\in\mathbb{R}^n$, define 
 $$
{\bm X}_{m}:=\min_{j=1,\cdots, n} X_j,\quad  {\bm X}_{M}:=\max_{j=1\cdots,n} X_j,\quad 
 \|\bm X\|_1:=\sum_{j=1}^n|X_{i}|,\quad \text{and}\quad \|\bm X\|_{\infty}:=\max_{j\in\Omega}| X_j|.
$$
We denote by ${\rm diag}(\bm X)$ the diagonal matrix with diagonal entries $[{\rm diag}(\bm X)]_{ii}= X_i$ for all $i=1,\cdots,n$. We write $\bm X\ge  \bm 0$ if $X_i\ge  0$ for all $i\in\Omega$, $\bm X\gg  \bm 0$ if $X_i>  0$ for all $i\in\Omega$, and $\bm X>\bm 0$ if $\bm X\ge \bm 0$ and $\bm X\neq \bm 0$. 
 We denote by $<\cdot,\cdot>$ the inner product on $\mathbb{R}^n$, that is 
 $$ 
 \big<\bm X,\bm Y\big>=\sum_{i=1}^n X_i Y_i,\quad \forall\ \bm X,\ \bm Y\in\mathbb{R}^n.
 $$
 For  $\bm X, \bm Y\in \mathbb{R}^n$, define the Hadamard product $ \bm X\circ \bm Y :=(X_1Y_1,\cdots, X_n Y_n)^T$ 
 and set  ${\bm X}/{\bm Y}=(X_1/Y_1,\cdots,$
 $ X_n/ Y_n)^T$ if $Y_i\neq 0$ for all $i\in\Omega$.
Adopting these notations, system \eqref{model-mass-action} can be rewritten in the compact form
 \begin{equation*}
     \begin{cases}
         \bm S'=d_S\mathcal{L}\bm S-\bm\beta\circ\bm I\circ\bm S+\bm\gamma\circ\bm I, & t>0,\cr 
        \bm I'=d_I\mathcal{L}\bm I+\bm\beta\circ\bm I\circ \bm S-\bm\gamma\circ\bm I, & t>0.
     \end{cases}
 \end{equation*}

Throughout this work, we make the following assumptions:
\begin{itemize}
\item[{\bf (A1)}] The matrix $\mathcal{L}=(L_{i,j})_{i,j=1}^K$ is quasi-positive (i.e., $L_{ij}\ge 0$ for any $i\neq j$)  and irreducible.
\item[{\bf (A2)}]  $\bm S^0, \bm I^0\ge \bm 0$,  and   $\sum_{j\in\Omega}( S^{0}_j+ I^{0}_j):=N$, where $N$ is some fixed positive constant.
\item[{\bf (A3)}] $\bm\beta, \bm\gamma\gg \bm 0$ and $d_S, d_I>0$.
\end{itemize}
Due to biological interpretations of the vectors $\bm S$ and $\bm I$, we will only be interested in nonnegative solutions of \eqref{model-mass-action}. Let $\mathbb{R}_+$ denote the set of nonnegative real numbers. For any initial data $(\bm S(0), \bm I(0))=(\bm S^0, \bm I^0)\in\mathbb{R}_+^n\times \mathbb{R}_+^n$,  \eqref{model-mass-action} has a unique nonnegative solution $(\bm S(t),\bm I(t))$ defined on a maximal interval of existence $[0,T_{\max})$. Summing up all the equations in \eqref{model-mass-action}, we find that $\frac{d}{dt}\sum_{i\in\Omega} (S_i+I_i)=0$, which means that the total population is preserved. Therefore, for any initial data $(\bm S^0, \bm I^0)$ satisfying {\bf (A2)}, the solution satisfies 
\begin{equation}\label{Eq1:1}
 \sum_{j\in\Omega}( S_{j}(t)+ I_{j}(t))=N,\quad \forall\ t>0.
\end{equation}
This means that the solution of  \eqref{model-mass-action} exists globally and $T_{\max}=\infty$.  Thanks to \eqref{Eq1:1} and the fact that the positive constant $N$ is fixed, the semiflow generated by  solutions of \eqref{model-mass-action} leaves invariant the set 
$$
\mathcal{E}:=\Big\{(\bm S,\bm I)\in \mathbb{R}_+^n\times \mathbb{R}_+^n\ : \ \sum_{j\in\Omega}( S_j+ I_{j})=N\Big\}.
$$  
It is easy to see that if $\bm I^0=\bm 0$ then $\bm I(t)=\bm 0$ for all $t\ge 0$. Assumption {\bf (A1)} indicates that the patches are strongly connected, i.e. for any two patches the individuals can move directly or indirectly from one patch to another.  By {\bf (A1)} and the comparison principle, if $\bm I^0>\bm 0$,  then $\bm S(t)\gg \bm 0$ and $\bm I(t)\gg \bm 0$ for all $t>0$.

For  $n\times n$ real-valued square matrix $M$, let $\sigma(M)$ be the set of eigenvalues of $M$,  $\sigma_{*}(M)$ be the spectral bound, i.e.,
\begin{equation*}
    \sigma_{*}(M):=\max\{\mathfrak{R}e(\lambda)\ :\ \lambda\in\sigma(M)\},
\end{equation*}
where $\mathfrak{R}e(\lambda)$ is the real part of $\lambda\in \mathbb{C}$, 
and $\rho(M)$ be the spectral radius, i.e.,
$$
\rho(M):=\max\{|\lambda|\ :\ \lambda\in\sigma(M)\}.
$$

Since $\mathcal{L}$ is quasi-positive and irreducible, it generates a strongly-positive matrix-semigroup $\{e^{t\mathcal{L}}\}_{t\ge 0}$ on $\mathbb{R}^n$.  Moreover since $\sum_{i\in\Omega}\mathcal{L}_{ij}=0$ for each $j\in\Omega$,  by the Perron-Frobenius theorem, $\sigma_*(\mathcal{L})=0$ is a simple eigenvalue of $\mathcal{L}$. Furthermore, there is an   eigenvector $\bm\alpha$ associated with $\sigma_*(\mathcal{L})$ satisfying 
\begin{equation}\label{alpha-eq}
\mathcal{L}\bm\alpha=\bm 0,\quad \sum_{j\in\Omega}{\alpha}_j=1,\quad \text{and} \quad {\alpha}_j>0,\ \forall \ j\in\Omega,
\end{equation}  
and $\bm\alpha$ is the unique nonnegative eigenvalue of $\mathcal{L}$ with $\sum_{j\in\Omega}{\alpha}_j=1$. Throughout the paper, $\bm\alpha$ is fixed and satisfies \eqref{alpha-eq}.
 
An equilibrium solution  $(\bm S, \bm I)$ of  \eqref{model-mass-action} is called a \textit{disease free equilibrium} (DFE) if $\bm I=\bm 0$ and an \textit{endemic equilibrium} (EE) if $\bm I\neq \bm 0$.  It is easy to check that $(N\bm\alpha, \bm 0)$ is the unique DFE. To define the basic reproduction number of  \eqref{model-mass-action}, we linearize the model at DFE and find that the stability of DFE is determined by $\sigma_*(d_I\mathcal{L}+{\rm diag}(N\bm\alpha\circ\bm\beta-\bm\gamma))$. So define 
 \begin{equation}\label{F}
 {F}={\rm diag}(N\bm\alpha\circ \bm\beta) \quad \text{and}\quad {V}:={\rm diag}(\bm \gamma)-d_I\mathcal{L}.
 \end{equation} 
Then  ${V}$ is invertible. Following the next generation matrix approach \cite{Driessche, Diekmann}, the \textit{basic reproduction number} $\mathcal{R}_{0}$ of \eqref{model-mass-action}  is given by
 \begin{equation}\label{R-0}
      \mathcal{R}_{0}:=\rho({F}{V}^{-1}).
 \end{equation}
It is well known that $\mathcal{R}_{0}-1$ and $\sigma_*(-V+F)=\sigma_*(d_I\mathcal{L}+{\rm diag}(N\bm\alpha\circ\bm\beta-\bm\gamma))$ have the same sign, and the DFE is locally asymptotically stable if  $\mathcal{R}_{0}<1$ and unstable if  $\mathcal{R}_{0}>1$ \cite{Driessche}.  By \cite{chen2020asymptotic,gao2020fast}, $\mathcal{R}_{0}$ is  decreasing in $d_I$ with 
\begin{equation}\label{R-0-limit}
\lim_{d_I\to 0}\mathcal{R}_0=\max_{j\in\Omega}\frac{N\alpha_j\beta_j}{\gamma_j}\ \ \text{and}\ \ \lim_{d_I\to \infty}\mathcal{R}_0=\frac{\sum_{j\in\Omega}N\alpha_j^2\beta_j}{\sum_{j\in\Omega}\alpha_j\gamma_j}.
\end{equation}
Moreover, $\mathcal{R}_0$ is strictly decreasing in $d_I$ if and only if $\bm\gamma$ is not a multiple of $\bm\alpha\circ\bm\beta$.

 For convenience, we define $\bm r=(r_1, \dots, r_n)^T\in\mathbb{R}^n$ with  
 \begin{equation*}
    { r}_{j}
    :=\frac{ \gamma_{j}}{ \beta_{j}}, \quad \forall\; j\in\Omega.
\end{equation*}
Biologically, $r_j$ measures the risk of patch $j$ when the patches are isolated, i.e. $d_S=d_I=0$.

\subsection{Preliminary results}
Recall that   $\bm \alpha$ is the eigenvector associated with $\sigma_*(\mathcal{L})=0$ as described in \eqref{alpha-eq}. By classical results in matrix theory, there is a subspace $\mathcal{Z}\subset\mathbb{R}^n$ invariant under $\mathcal{L}$ such that 
$$
\mathbb{R}^n={\rm span}(\bm \alpha)\oplus\mathcal{Z}.
$$
So there is projection map $P : \mathbb{R}^n\to {\rm span}(\bm \alpha)$ such that $P\bm \alpha=\bm\alpha$ and $P\bm Z={\bf 0}$ for any $\bm Z\in\mathcal{Z}$.  Moreover,
\begin{equation}\label{RT2}
    P\mathcal{L}\bm X=\bm 0,\quad \forall\;\bm X\in\mathbb{R}^n.
\end{equation}
Note that $P\bm X\in{\rm span}(\bm \alpha)$ and $({\rm id}-P)\bm X\in\mathcal{Z}$ for any $\bm X\in\mathbb{R}^n$. Furthermore, there is  positive number $\nu>0$ and $M_\nu>0$ \cite[Theorem 1.5.3]{DanHenry} such that 
\begin{equation}\label{RT1}
    \|e^{t\mathcal{L}}({\rm id}-P)\bm X\|_1\le M_{\nu}e^{-\nu t}\|\bm X\|_1, \quad \forall\ \bm X\in\mathbb{R}^n.
\end{equation}
In particular, observing that $e^{t\mathcal{L}}P\bm X=P\bm X$, we have that $e^{t\mathcal{L}}\bm X\to P\bm X$ as $t\to\infty$ for any $\bm X\in\mathbb{R}^n$.
\medskip

The following lemma will be useful in the proofs of our main results.

\begin{lem}\label{lem0} Suppose that  {\bf (A1)} holds. 
    Let $d>0$  and $\bm F :\mathbb{R}_+\to \mathbb{R}^n$ be a continuous map satisfying $\|\bm F(t)\|_1\to 0$ as $t\to\infty$. If  $\bm X(t)$ be a bounded solution of the system
    \begin{equation*}
       \begin{cases}
       \bm X'(t)=d\mathcal{L}\bm X(t) +\bm F(t),\ t>0,\cr 
        \bm X(0)=\bm X^0\in\mathbb{R}^n,
        \end{cases}
    \end{equation*}
    then $\bm X(t)-({\sum_{j\in\Omega} X_j(t)})\bm\alpha\to\bm 0$ as $t\to\infty$. 
    In particular if $\bm F(t)={\bf 0} $ for all $t\ge 0$,  then $\bm X(t)\to ({\sum_{j\in\Omega}X^0_j})\bm\alpha$ as $t\to\infty$.
\end{lem}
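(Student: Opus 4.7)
The natural plan is to decompose the solution along the direct sum $\mathbb{R}^n={\rm span}(\bm\alpha)\oplus\mathcal{Z}$ induced by the projection $P$ and to exploit the exponential contraction of $e^{td\mathcal{L}}$ on the invariant subspace $\mathcal{Z}$ provided by \eqref{RT1}. First observe that $P\mathcal{L}=0$ by \eqref{RT2}, while $\mathcal{L}P=0$ because $\mathcal{L}\bm\alpha=\bm 0$; hence $P$ commutes with $\mathcal{L}$, and both ${\rm span}(\bm\alpha)$ and $\mathcal{Z}$ are $\mathcal{L}$-invariant. Applying $P$ and ${\rm id}-P$ to the ODE therefore yields
\begin{equation*}
(P\bm X)'(t)=P\bm F(t),\qquad \big(({\rm id}-P)\bm X\big)'(t)=d\mathcal{L}({\rm id}-P)\bm X(t)+({\rm id}-P)\bm F(t).
\end{equation*}

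For the $\mathcal{Z}$-component I would apply the variation-of-constants formula
\begin{equation*}
({\rm id}-P)\bm X(t)=e^{td\mathcal{L}}({\rm id}-P)\bm X^0+\int_0^t e^{(t-s)d\mathcal{L}}({\rm id}-P)\bm F(s)\,ds,
\end{equation*}
together with the rescaled version of \eqref{RT1}, namely $\|e^{td\mathcal{L}}({\rm id}-P)\bm Y\|_1\le M_\nu e^{-\nu d t}\|\bm Y\|_1$. The boundary term decays exponentially. For the convolution, continuity of $\bm F$ together with $\|\bm F(t)\|_1\to 0$ forces $\bm F$ to be globally bounded on $[0,\infty)$, so I would split the integral at $t/2$: on $[0,t/2]$ the factor $e^{-\nu d(t-s)}\le e^{-\nu d t/2}$ produces an exponentially small contribution; on $[t/2,t]$ one bounds $\|\bm F(s)\|_1\le \sup_{s\ge t/2}\|\bm F(s)\|_1$ and uses $\int_{t/2}^t e^{-\nu d(t-s)}\,ds\le 1/(\nu d)$, while the supremum tends to $0$ by hypothesis. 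This shows $({\rm id}-P)\bm X(t)\to\bm 0$ as $t\to\infty$.

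To finish, write $P\bm X(t)=c(t)\bm\alpha$. Since $\sum_j\alpha_j=1$, summing the components of the identity $\bm X(t)=c(t)\bm\alpha+({\rm id}-P)\bm X(t)$ gives $\sum_j X_j(t)=c(t)+\sum_j[({\rm id}-P)\bm X(t)]_j$, so $c(t)-\sum_j X_j(t)\to 0$. Consequently
\begin{equation*}
\bm X(t)-\Big(\sum_{j\in\Omega}X_j(t)\Big)\bm\alpha=\Big(c(t)-\sum_{j\in\Omega}X_j(t)\Big)\bm\alpha+({\rm id}-P)\bm X(t)\longrightarrow\bm 0,
\end{equation*}
which is the first assertion. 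In the homogeneous case $\bm F\equiv\bm 0$, the identity $\sum_i(\mathcal{L}\bm X)_i=0$, which follows from the columns of $\mathcal{L}$ summing to zero, shows that $\sum_j X_j(t)$ is conserved and equals $\sum_j X_j^0$, reducing the first assertion to $\bm X(t)\to (\sum_j X_j^0)\bm\alpha$.

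The main obstacle I anticipate is precisely the convolution estimate: \eqref{RT1} alone does not immediately yield decay of the Duhamel integral, since $\bm F$ is only assumed continuous and vanishing at infinity with no quantitative rate. The split at $t/2$, combined with simultaneous use of the boundedness and the asymptotic smallness of $\bm F$, is the key device. Everything else reduces to bookkeeping with the spectral decomposition given by $P$.
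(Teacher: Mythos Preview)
Your proof is correct and follows essentially the same strategy as the paper: decompose along $P$ and ${\rm id}-P$, show the $\mathcal{Z}$-component dies via the variation-of-constants formula and the exponential contraction \eqref{RT1}, then reconcile $c(t)$ with $\sum_j X_j(t)$. The only cosmetic difference is in the convolution estimate: the paper writes the Duhamel formula from time $t$ to $t+\tau$ and invokes the assumed boundedness of $\bm X$, whereas you split the integral at $t/2$ and use the boundedness of $\bm F$ (which you correctly deduce from continuity plus decay at infinity); both devices are standard and equivalent here.
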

\begin{proof}
We write $\bm X(t)=P\bm X(t)+({\rm id}-P)\bm X(t)$ for any $t\ge 0$, where $P$ is as in \eqref{RT1}. Then $P\bm X(t)=g_{X}(t)\bm\alpha$ for some $g_{X}(t)\in\mathbb{R}$,  $t\ge 0$. Next, thanks to \eqref{RT2},
\begin{align*}
\frac{d}{dt}({\rm id}-P)\bm X=& \frac{d\bm X}{dt}-P\frac{d\bm X}{dt}\cr 
=& d\mathcal{L}({\rm id}-P)\bm X+\bm F(t)-dP\mathcal{L}\bm X-P\bm F(t)\cr
=& d\mathcal{L}({\rm id}-P)\bm X +({\rm id}-P)\bm F(t).
\end{align*}
Hence, by the variation of constant formula,
\begin{equation}\label{RT4-2}
    ({\rm id}-P)\bm X(t+\tau)=e^{d\tau \mathcal{L}}({\rm id}-P)\bm X(t)+\int_0^{\tau}e^{d(\tau-s)\mathcal{L}}({\rm id}-P)\bm F(t+s)ds,\quad t\ge 0, \tau>0.
\end{equation}
By \eqref{RT1} and \eqref{RT4-2},
\begin{align}\label{RT5-2}
    \|({\rm id}-P)\bm X(t+\tau)\|_1\le & M_{\nu}e^{-\tau\nu d}\|\bm X(t)\|_1+M_{\nu}\int_0^{\tau}e^{-d\nu(\tau-s)}\|\bm F(t+s)\|_1ds\cr 
    \le & M_{\nu}e^{-d\tau\nu}\sup_{t'\ge 0}\|\bm X(t')\|_1+M_{\nu}\Big[\int_0^{\tau}e^{-d\nu(\tau-s)}ds\Big]\sup_{t'\ge t}\|\bm F(t')\|_1\cr 
    \le & M_{\nu}e^{-d\tau\nu}\sup_{t'\ge 0}\|\bm X(t')\|_1+\frac{M_{\nu}}{d\nu}\sup_{t'\ge t}\|\bm F(t')\|_1.
\end{align}
Since $\|\bm F(t)\|_1\to 0$ as $t\to\infty$, we  conclude from \eqref{RT5-2} that $\|({\rm id}-P)\bm X(t)\|_1\to 0 $ as $t\to\infty$.  As a result, since $\bm X(t)=g_{X}(t)\bm\alpha + ({\rm id}-P)\bm X(t)$, then 
$$
\sum_{j\in\Omega} X_j(t)=g_{X}(t)+\sum_{j\in\Omega}\big<\bm e_j,({\rm id}-P)\bm X(t)\big>,\quad \forall\ t\ge 0,
$$ 
where  $\{\bm e_1,\cdots,\bm e_n\}$ is the canonical basis of $\mathbb{R}^n$. Hence, 
$$
\Big|g_{X}(t)-{\sum_{j\in\Omega} X_j(t)}\Big|\le \|({\rm id}-P)\bm X(t)\|_1\to 0\quad \text{as} \quad t\to\infty.
$$
Therefore,
\begin{align*}
\Big\|\bm X(t)-\bm\alpha{\sum_{j\in\Omega} X_j}\Big\|_1=&\Big\|P\bm X(t)+({\rm id}-P)\bm X(t)-\bm\alpha{\sum_{j\in\Omega} X_j}\Big\|_1\cr 
= & \Big\|({\rm id}-P)\bm X(t)+\Big(g_{X}(t)-{\sum_{j\in\Omega} X_j}\Big)\bm\alpha\Big\|_1\cr 
\le & \|({\rm id}-P)\bm X(t)\|_1+\Big|g_{X}(t)-({\sum_{j\in\Omega} X_j})\Big|\to 0\quad \text{as}\quad t\to\infty.
\end{align*}
This completes the proof of the first assertion of the lemma. If in addition $\bm F(t)={\bf 0}$ for any $t\ge 0$, then 
$$
\frac{d}{dt}\sum_{j\in\Omega} X_j(t)=0,\quad t>0. 
$$
Thus $\sum_{j\in\Omega} X_i(t)=\sum_{j=1} X^0_j$ for all $t\ge 0$, which completes the proof of the lemma. 
\end{proof}

 We also recall the following Harnack's inequality type result from \cite{DBS2023}.

\begin{lem}\cite[Lemma 3.1]{DBS2023}\label{Harnck-lemma}  Suppose that  {\bf (A1)} holds. Let $d>0$ and $ \bm M\in C(\mathbb{R}_+, \mathbb{R}^n)$ such that 
\begin{equation*}
    \sup_{t\ge 0}\|\bm M(t)\|_{\infty}\le m_{\infty}<\infty.
\end{equation*}
Then there is a positive number $c_{d,m_{\infty}}$ such that any nonnegative solution $\bm U(t)$ of 
\begin{equation}\label{Harnack-eq1}
    {\bm U'}=d\mathcal{L}\bm U +\bm M(t)\circ \bm U, \ t>0
\end{equation}
satisfies
\begin{equation}\label{Harnack-eq2}
    \|\bm U(t)\|_{\infty}\le c_{d,m_{\infty}}\bm U_{m}(t),\quad \forall\ t\ge 1.
\end{equation}
    
\end{lem}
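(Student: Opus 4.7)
The plan is to sandwich $\bm U(t)$ between the two unperturbed semigroup flows $e^{\pm m_\infty\tau}e^{\tau d\mathcal{L}}\bm U(s)$ and then exploit the strong positivity of $e^{td\mathcal{L}}$ for $t>0$ that follows from the irreducibility in \textbf{(A1)}. Unlike Lemma~\ref{lem0}, which only needed $L^1$-decay of the perturbation, here the perturbation is multiplicative, so the comparison has to be done via cooperative auxiliary systems rather than via Duhamel/projection.

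First, I would observe that the bound $\|\bm M(t)\|_{\infty}\le m_\infty$ gives the componentwise differential inequalities
$$d\mathcal{L}\bm U-m_\infty\bm U\;\le\;\bm U'\;\le\;d\mathcal{L}\bm U+m_\infty\bm U,$$
and both operators $d\mathcal{L}\pm m_\infty I$ are quasi-positive (their off-diagonals agree with $dL_{ij}\ge 0$). A standard Kamke-type comparison principle for cooperative linear systems therefore produces
$$e^{-m_\infty\tau}\,e^{\tau d\mathcal{L}}\bm U(s)\;\le\;\bm U(s+\tau)\;\le\;e^{m_\infty\tau}\,e^{\tau d\mathcal{L}}\bm U(s),\qquad s,\tau\ge 0.$$
Specialising to $\tau=1$ reduces the lemma to a statement about the single fixed matrix $e^{d\mathcal{L}}$.

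Next I would invoke the classical fact that a quasi-positive irreducible $\mathcal{L}$ generates a semigroup for which $e^{td\mathcal{L}}$ has \emph{all} entries strictly positive for every $t>0$; this is obtained by writing $e^{td\mathcal{L}}=e^{-t\lambda}e^{t(d\mathcal{L}+\lambda I)}$ with $\lambda$ large enough that $d\mathcal{L}+\lambda I$ is a nonnegative irreducible matrix, so that some finite-sum truncation of its exponential is strictly positive. Defining
$$c_1:=\min_{i,j}(e^{d\mathcal{L}})_{ij}>0,\qquad c_2:=\max_{i,j}(e^{d\mathcal{L}})_{ij},$$
I would then get the uniform bounds $c_1\|\bm v\|_1\le (e^{d\mathcal{L}}\bm v)_i\le c_2\|\bm v\|_1$ for every $i\in\Omega$ and every $\bm v\ge\bm 0$. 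Applying this with $\bm v=\bm U(s)$ in the $\tau=1$ sandwich yields
$$e^{-m_\infty}c_1\|\bm U(s)\|_1\;\le\;U_i(s+1)\;\le\;e^{m_\infty}c_2\|\bm U(s)\|_1,\qquad \forall\,i\in\Omega,\ \forall\,s\ge 0.$$

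Finally, for any $t\ge 1$ I would set $s=t-1\ge 0$: the common factor $\|\bm U(s)\|_1$ cancels between the upper bound on $\max_i U_i(t)$ and the lower bound on $\bm U_m(t)$, producing
$$\|\bm U(t)\|_\infty\;\le\;\frac{c_2}{c_1}\,e^{2m_\infty}\,\bm U_m(t)\;=:\;c_{d,m_\infty}\,\bm U_m(t),$$
with the degenerate case $\bm U\equiv\bm 0$ handled trivially and the nontrivial case $\bm U^0>\bm 0$ giving $\|\bm U(s)\|_1>0$ by strong positivity of $e^{sd\mathcal{L}}$ for $s>0$. The step I expect to be the most delicate is the first one: one has to verify carefully that the scalar componentwise differential inequalities really do pass to the pointwise vector inequality against the auxiliary cooperative systems, since only quasi-positivity (not full positivity) is available; once this Kamke argument is in place, everything else is a direct consequence of Perron--Frobenius strong positivity of $e^{d\mathcal{L}}$.
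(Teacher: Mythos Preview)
The paper does not prove this lemma; it is quoted without proof from \cite[Lemma~3.1]{DBS2023}. Your argument is correct and is the standard route to such a discrete Harnack inequality: sandwich $\bm U$ between solutions of the constant-coefficient cooperative systems with generators $d\mathcal{L}\pm m_\infty I$, then use the strict positivity of all entries of $e^{d\mathcal{L}}$ (a consequence of Perron--Frobenius for the irreducible quasi-positive $\mathcal{L}$) to compare $\|\bm U(t)\|_\infty$ and $\bm U_m(t)$ through the common quantity $\|\bm U(t-1)\|_1$. The Kamke comparison you flag as delicate is in fact immediate here, since the auxiliary systems are linear with quasi-positive generators, so their semigroups are order-preserving and the differential inequalities integrate directly to the claimed sandwich.
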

    
We will also need the following result.

\begin{lem}\label{lem4} Let $F:[0,\infty)\to [0,\infty)$ be a  H\"older continuous function satisfying 
$$ 
\sup_{t\ge a}F(t)e^{\int_a^tF(s)ds}<\infty 
$$
for some nonnegative number $a\ge 0$. Then $F(t)\to 0$ as $t\to\infty$.
\end{lem}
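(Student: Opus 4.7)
The plan is to introduce $G(t):=\int_a^t F(s)\,ds$, which is nondecreasing since $F\ge 0$, and to recast the hypothesis as the pointwise inequality $F(t)e^{G(t)}\le M$ for some constant $M>0$ and every $t\ge a$. Because $G$ is monotone, the limit $L:=\lim_{t\to\infty}G(t)$ exists in $[0,\infty]$, so I would proceed by case analysis on $L$.

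If $L=+\infty$, then the bound $F(t)\le Me^{-G(t)}$ already forces $F(t)\to 0$ with no further work, and in particular the H\"older hypothesis plays no role here. This case is essentially a one-line consequence of the hypothesis.

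If instead $L<\infty$, then $F\in L^1([a,\infty))$, and since $F$ is H\"older continuous on $[0,\infty)$ it is uniformly continuous there. I would then invoke the classical fact that a nonnegative, uniformly continuous, Lebesgue-integrable function on a half-line must tend to zero: if $F(t)\not\to 0$, one can find $\varepsilon>0$ and $t_n\to\infty$ with $F(t_n)\ge\varepsilon$, and uniform continuity produces a fixed $\delta>0$ such that $F\ge\varepsilon/2$ on each $[t_n-\delta,t_n+\delta]$; thinning the sequence $\{t_n\}$ to make these intervals disjoint contradicts $\int_a^\infty F<\infty$. The main obstacle of the argument is concentrated entirely in this finite-integral case, since mere continuity of $F$ would be insufficient---an integrable nonnegative continuous function on $[a,\infty)$ need not decay---so the H\"older assumption is genuinely needed precisely to promote $L^1$-integrability to pointwise decay.
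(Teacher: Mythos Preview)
Your proof is correct and follows essentially the same approach as the paper: both arguments hinge on the dichotomy that either $\int_a^\infty F=\infty$ (in which case $F(t)\le Me^{-G(t)}\to 0$ directly) or $\int_a^\infty F<\infty$ (in which case H\"older continuity plus integrability forces $F\to 0$). The paper packages this as a proof by contradiction---assuming $\limsup F>0$ and deducing that the integral must be finite---whereas you do the cleaner direct case split; you also spell out the classical ``uniformly continuous $+$ $L^1$ $\Rightarrow$ decay'' lemma, which the paper simply asserts.
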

\begin{proof} Let $\bar{F}:=\limsup_{t\to \infty}F(t)$. It suffices to show that $\bar{F}=0$. Suppose to the contrary that $\bar F\neq 0$. Then there is a sequence $\{t_n\}_{n\ge 1}$ converging to infinity such that $\lim_{n\to\infty}F(t_n)=\bar{F}\in(0,\infty]$. Define $K:=\sup_{t\ge a}F(t)e^{\int_a^tF(s)ds}$. Then 
\begin{equation}\label{RT11}
\int_a^{t_n}F(s)ds\le \ln\Big(\frac{K}{F(t_n)}\Big),\quad \quad \forall\ n\gg 1.
\end{equation}
Since $F(t_n)\le K$ for $n\gg 1$,  $\bar{F}<\infty$. It then follows from \eqref{RT11} that 
$\int_a^{\infty}F(s)ds\le\ln( K/{\bar{F}})<\infty$. Since  $F(t)\ge 0$ for all $t\ge 0$ and $F$ is H\"older continuous, we have $F(t)\to 0$ as $t\to\infty$. This is a contradiction. Therefore  $\bar{F}=0$, which yields the desired result.
\end{proof}

Our approach to study the existence and uniqueness/non-uniqueness of EE of \eqref{model-mass-action} when the dispersal rates are positive is to first transform the equilibrium problem into an equivalent problem. To carry out this approach we will need the following two lemmas.

\begin{lem}\label{lem2} Suppose that  {\bf (A1)}-{\bf (A3)}  holds. Let $\bm\alpha$ be defined as above. Consider the one-parameter family of algebraic equations:
\begin{equation}\label{Eq1}
    d_I\mathcal{L}\bm U +(l\bm\beta\circ(N\bm\alpha-d_I\bm U)-\bm\gamma)\circ\bm U=\bm 0, \quad l>0.
\end{equation}
The following conclusions hold.
\begin{itemize}
    \item[\rm (i)] System \eqref{Eq1} has a (unique) positive solution, $\bm U^l$,  if and only if $l>1/{\mathcal{R}_0}$. Moreover the positive solution $\bm U^l$, if exists, satisfies that
    \begin{equation}\label{lem2-eq1}
        {\bf 0}<d_I\bm U^l<N\bm\alpha,\quad \lim_{l\to {1}/{\mathcal{R}_0}}\|\bm U^l\|_1=0,\quad \lim_{l\to\infty}\|d_I\bm U^l-N\bm\alpha\|_1=0,\quad \text{and}\quad \lim_{l\to\infty}\|l(N\bm\alpha-d_I\bm U^l)-\bm r\|_1=0.
    \end{equation}
    \item[\rm (ii)] The mapping $\big({1}/{\mathcal{R}_0},\infty\big)\ni l\mapsto \bm U^l$ is smooth and strictly increasing.

    \item[\rm (iii)] The function $\mathcal{N}:(1/\mathcal{R}_0, \infty)\to \mathbb{R}_+$ defined by 
    \begin{equation}\label{N-d_I-def}
        \mathcal{N}(l)=l\sum_{j\in\Omega}(N\alpha_j-d_I U^l_j),\quad \forall\ l> \frac{1}{\mathcal{R}_0}
    \end{equation}
    is continuously differentiable and satisfies  \begin{equation}\label{lem2-eq2}
        \lim_{l\to {1}/{\mathcal{R}_0}}\mathcal{N}(l)=\frac{N}{\mathcal{R}_0}\quad \text{and}\quad \lim_{l\to\infty}\mathcal{N}(l)=\sum_{j\in\Omega} r_j.
    \end{equation} 
\end{itemize}

\end{lem}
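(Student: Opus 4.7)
The plan is to analyze \eqref{Eq1} as a nonlinear eigenvalue problem for the matrix-valued map
\[A(\bm U) := d_I\mathcal{L} + {\rm diag}(l\bm\beta\circ(N\bm\alpha - d_I\bm U) - \bm\gamma),\]
which is quasi-positive and irreducible for every $\bm U \in \mathbb{R}^n$. Then \eqref{Eq1} reads $A(\bm U)\bm U = \bm 0$, so any positive solution $\bm U \gg \bm 0$ is a Perron eigenvector of $A(\bm U)$ and satisfies $\sigma_*(A(\bm U)) = 0$. Observing that $A(\bm 0) = -V + lF$, the spectral bound $\sigma_*(A(\bm 0))$ has the same sign as $l\mathcal{R}_0 - 1$. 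For the non-existence half of (i), if $l \leq 1/\mathcal{R}_0$ and $\bm U \gg \bm 0$ solved \eqref{Eq1}, testing against the positive left Perron eigenvector $\bm\phi$ of $A(\bm 0)$ would yield
\[0 = \bm\phi^{T}A(\bm U)\bm U = \sigma_*(A(\bm 0))\langle\bm\phi,\bm U\rangle - l d_I\langle\bm\phi,\bm\beta\circ\bm U\circ\bm U\rangle,\]
whose right-hand side is strictly negative, a contradiction.

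For existence when $l > 1/\mathcal{R}_0$, I will build an ordered sub/super-solution pair for the cooperative ODE $\bm U' = A(\bm U)\bm U$. Taking $\overline{\bm U} := N\bm\alpha/d_I$ and using $\mathcal{L}\bm\alpha = \bm 0$ gives $A(\overline{\bm U})\overline{\bm U} = -\bm\gamma\circ\overline{\bm U} \leq \bm 0$, so $\overline{\bm U}$ is a supersolution. Since $\sigma_*(A(\bm 0)) > 0$, choosing $\bm\psi \gg \bm 0$ to be the right Perron eigenvector of $A(\bm 0)$ and $\epsilon > 0$ small, $\underline{\bm U} := \epsilon\bm\psi \leq \overline{\bm U}$ becomes a subsolution. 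Monotone iteration then produces a positive equilibrium $\bm U^l \gg \bm 0$ (by irreducibility). The strict upper bound $d_I\bm U^l \ll N\bm\alpha$ follows from a maximum-principle argument: if $\max_i d_I U^l_i/(N\alpha_i) \geq 1$ were attained at $j$, the sign pattern $L_{jk}\geq 0$ for $k\neq j$ combined with $\mathcal{L}(N\bm\alpha) = \bm 0$ would force $d_I(\mathcal{L}\bm U^l)_j \leq 0$, while the $j$-th equation of \eqref{Eq1} gives $d_I(\mathcal{L}\bm U^l)_j = -\bigl(l\beta_j(N\alpha_j - d_I U^l_j) - \gamma_j\bigr)U^l_j > 0$, a contradiction.

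The main obstacle is uniqueness, which I address via a sliding/scaling argument. Given two positive solutions $\bm U_1, \bm U_2$ of \eqref{Eq1}, set $t^* := \max_i U_{2,i}/U_{1,i}$, attained at some index $j$, so that $t^*\bm U_1 - \bm U_2 \geq \bm 0$ with equality at $j$. The sign pattern of $\mathcal{L}$ gives $d_I\mathcal{L}(t^*\bm U_1 - \bm U_2)_j \geq 0$. On the other hand, substituting the two equations at component $j$ and using $U_{2,j} = t^* U_{1,j}$, the right-hand side collapses after cancellations to $l d_I\beta_j t^* U_{1,j}^2 (1 - t^*)$, forcing $t^* \leq 1$. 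The symmetric inequality $\min_i U_{2,i}/U_{1,i} \geq 1$ then gives $\bm U_1 = \bm U_2$.

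The limits in \eqref{lem2-eq1} follow from the a priori bound $d_I\bm U^l \ll N\bm\alpha$: as $l \to \infty$, compactness forces $\bm U^l \to N\bm\alpha/d_I$, and passing to the limit in \eqref{Eq1} yields $l\bm\beta\circ(N\bm\alpha - d_I\bm U^l) \to \bm\gamma$, i.e.\ $l(N\bm\alpha - d_I\bm U^l) \to \bm r$; as $l \to 1/\mathcal{R}_0$, normalizing $\bm U^l/\|\bm U^l\|_1$ along any sequence and passing to the limit identifies the limit with the Perron eigenvector of $A(\bm 0)$ at the bifurcation point, forcing $\|\bm U^l\|_1 \to 0$. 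For (ii), letting $F(l,\bm U) := A(\bm U)\bm U$, the Jacobian $D_{\bm U}F(l,\bm U^l) = A(\bm U^l) - l d_I{\rm diag}(\bm\beta\circ\bm U^l)$ equals $A(\bm U^l)$ (spectral bound $0$) minus a strictly positive diagonal, so it has strictly negative spectral bound, is invertible, and $-(D_{\bm U}F)^{-1}$ is entrywise positive. The implicit function theorem then gives smoothness of $l \mapsto \bm U^l$, and differentiating $F(l,\bm U^l)=\bm 0$ in $l$ yields
\[\frac{d\bm U^l}{dl} = -(D_{\bm U}F(l,\bm U^l))^{-1}\bigl(\bm\beta\circ(N\bm\alpha - d_I\bm U^l)\circ\bm U^l\bigr) \gg \bm 0.\]
Part (iii) is then immediate from $\mathcal{N}(l) = l(N - d_I\sum_j U^l_j)$ together with the limits in (i).
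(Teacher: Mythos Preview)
Your approach is broadly the same as the paper's---Perron--Frobenius structure, sub/super-solutions, and the implicit function theorem---and parts (ii) and (iii) are fine. Your uniqueness argument via the sliding ratio $t^*=\max_i U_{2,i}/U_{1,i}$ is correct and more explicit than the paper, which simply cites the logistic-type literature. For monotonicity you compute $d\bm U^l/dl$ and use that $-(D_{\bm U}F)^{-1}$ is entrywise positive, whereas the paper observes that $\bm U^{l+h}$ is a strict supersolution of the $\bm U^l$-equation; both are valid.

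There is, however, a genuine gap in the limit $\lim_{l\to\infty}d_I\bm U^l=N\bm\alpha$. The sentence ``compactness forces $\bm U^l\to N\bm\alpha/d_I$'' is not justified: compactness only gives subsequential limits $\bm U^*$, and dividing \eqref{Eq1} by $l$ yields $\bm\beta\circ(N\bm\alpha-d_I\bm U^*)\circ\bm U^*=\bm 0$, which allows $U^*_i=0$ at some coordinates. Nothing in your argument rules this out. The paper handles this by passing to $\bm Z^l:=l(N\bm\alpha-d_I\bm U^l)$, which satisfies
\[
\tfrac{1}{l}\mathcal{L}\bm Z^l+\bm\beta\circ(\bm r-\bm Z^l)\circ\bm U^l=\bm 0,
\]
and obtains by comparison the uniform bounds $\tfrac{\bm r_m}{\bm\alpha_M}\bm\alpha\le\bm Z^l\le\tfrac{\bm r_M}{\bm\alpha_m}\bm\alpha$. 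Both limits $d_I\bm U^l\to N\bm\alpha$ and $\bm Z^l\to\bm r$ then follow directly. Alternatively, you could reorder your proof: establish monotonicity first, so that $\bm U^l$ is increasing and bounded, hence convergent to some $\bm U^*\ge\bm U^{l_0}\gg\bm 0$, after which the limit equation forces $\bm U^*=N\bm\alpha/d_I$.

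A smaller issue: your phrasing for $l\to 1/\mathcal{R}_0$ is backwards. Normalizing $\bm U^l/\|\bm U^l\|_1$ and identifying its limit with a Perron eigenvector is a statement about the \emph{direction} of $\bm U^l$ and does not by itself force $\|\bm U^l\|_1\to 0$. The correct argument (which is what the paper does) is: if $\|\bm U^{l_k}\|_1\not\to 0$, a subsequence converges to a nonzero $\bm U^*$ solving \eqref{Eq1} at $l=1/\mathcal{R}_0$; irreducibility then makes $\bm U^*\gg\bm 0$, contradicting the nonexistence half of (i).
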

\begin{proof} {\rm (i)} Since  \eqref{Eq1} is of logistic type, it has a (unique) positive solution if and only the trivial solution is unstable, i.e., $\sigma_*(d_I\mathcal{L}+{\rm diag}(lN\bm\alpha\circ\bm\beta-\bm\gamma))>0$\cite{cosner1996variability,li2010global,Lu1993}. Since $l\mathcal{R}_0-1$ has the same sign as $\sigma_*(d_I\mathcal{L}+{\rm diag}(lN\bm\alpha\circ\bm\beta-\bm\gamma))$, the positive solution exists if and only if $l>1/\mathcal{R}_0$.

Suppose that $l>1/\mathcal{R}_0$ such that the positive solution $\bm U^l$ exists. 
Since $\bar{\bm U}:=\frac{N}{d_I}\bm\alpha$ is a strict super solution of \eqref{Eq1}, we conclude that $d_I\bm U^l<N\bm\alpha$. Moreover since \eqref{Eq1} has no positive solution when $l={1}/{\mathcal{R}_0}$ and $\|\bm U^{l}\|_{\infty}<{N\|\bm\alpha\|_\infty}/{d_I}$ for any $l>{1}/{\mathcal{R}_0}$,  we must have that $\|\bm U^{l}\|_1\to 0$ as $l\to {1}/{\mathcal{R}_0}$ from the right. 

Let $\bm Z^l:=l(N\bm\alpha-d_I\bm U^l)$. By \eqref{Eq1},  $\bm Z^l$ satisfies
\begin{equation}\label{Z-l=eq}
  \frac{1}{l}\mathcal{L}\bm Z^{l} +\bm\beta\circ(\bm r-\bm Z^l)\circ\bm U^l=\bm 0. 
\end{equation}
It then follows from the comparison principle that 
\begin{equation}\label{Z-l-bounds}
\frac{\bm r_{m}}{\bm\alpha_M}{\bm \alpha}\le\bm Z^l\le\frac{\bm r_{M}}{\bm\alpha_m}{\bm \alpha},\quad  \forall\  l>{1}/{\mathcal{R}_0}.
\end{equation} 
Hence,
$$
\|d_I\bm U^l-N\bm\alpha\|_1=\frac{1}{l}\|\bm Z^l\|_1\le \frac{\bm r_M}{l\bm\alpha_m}\to 0, \quad \text{as}\ l\to\infty.
$$
As a result,  
$$
\Big\|\frac{1}{l}\mathcal{L}\bm Z^l\Big\|_1\le \frac{1}{l}\|\mathcal{L}\|_1\|\bm Z^l\|_1\to0, \quad \text{as}\quad l\to\infty.
$$
Taking $l\to \infty$ in \eqref{Z-l=eq} yields that $\bm Z^l\to\bm r$. This completes the proof of \eqref{lem2-eq1}.

{\rm (ii)} It is clear from \eqref{Eq1} that $\bm U^{l+h}$ is a strict supersolution of the equation satisfied by $\bm U^l$ for any $l>{1}/{\mathcal{R}_0}$ and $h>0$. Therefore, $\bm U^{l}\ll \bm U^{l+h}$ for any $l>{1}/{\mathcal{R}_0}$ and $h>0$. Next, observe that the expression at the left hand side of \eqref{Eq1} is smooth in $l$ and $\bm U$. Moreover if \eqref{Eq1} is linearized at $\bm U^{l}$, we obtain the eigenvalue problem
$$
\lambda \bm \varphi=d_{I}\mathcal{L}\bm \varphi+(l\beta\circ(N\bm\alpha-2d_I\bm U^l)-\bm \gamma)\circ \bm \varphi.
$$
By {\bf (A1)} and the Perron-Frobenius theorem $\sigma_*(d_I\mathcal{L}+{\rm diag}(l\bm\beta\circ(N\bm\alpha-2d_I\bm U^l)-\bm \gamma))$ is the principal eigenvalue of the matrix $d_I\mathcal{L}+{\rm diag}(l\bm\beta\circ(N\bm\alpha-2d_I\bm U^l)-\bm \gamma)$. Furthermore,
\begin{align*}
\sigma_*(d_I\mathcal{L}+{\rm diag}(l\bm\beta\circ(N\bm\alpha-2d_I\bm U^l)-\bm\gamma))=&\sigma_*(d_I\mathcal{L}+{\rm diag}(l\bm\beta\circ(N\bm\alpha-d_I\bm U^l)-\bm\gamma)-ld_I{\rm diag}(\bm\beta\circ\bm U^l))\cr 
\le& \sigma_*(d_I\mathcal{L}+{\rm diag}(l\bm\beta\circ(N\bm\alpha-d_I\bm U^l)-\bm\gamma))-ld_I\bm\beta_m\bm U^l_m.
\end{align*}
Since $\bm U^l$ satisfies \eqref{Eq1}, $\sigma_*(d_I\mathcal{L}+{\rm diag}(l\bm\beta\circ(N\bm\alpha-d_I\bm U^l)-\bm\gamma))=0$. It follows that $\sigma_*(d_I\mathcal{L}+{\rm diag}(l\bm\beta\circ(N\bm\alpha-2d_I\bm U^l)-\bm\gamma))<0$. Therefore, $d_I\mathcal{L}+{\rm diag}(l\bm\beta\circ(N\bm\alpha-2d_I\bm U^l)-\bm\gamma)$ is invertible. By  the implicit function theorem, $\bm U^l$ depends smoothly on $l$.

{\rm (iii)} The regularity of the function $\mathcal{N}$ in $l$ follows from {\rm (ii)}. The asymptotic limits in \eqref{lem2-eq2} follow from \eqref{lem2-eq1} and the fact that $\sum_{j\in\Omega}\alpha_j=1$.
\end{proof}

\begin{lem}\label{lem3} Suppose that  {\bf (A1)}-{\bf (A3)}  holds.
\begin{itemize}
    \item[\rm (i)] Let $(\bm S,\bm I)$ be an EE solution of \eqref{model-mass-action} and define 
    \begin{equation}\label{kappa-def}
        \bm\kappa =d_S\bm S+d_I\bm I.
    \end{equation} 
    Then there is a positive number $\kappa^*$ such that $\bm \kappa=\kappa^*N\bm\alpha$, where $\kappa^*N=d_SN+(d_I-d_S)\sum_{i\in\Omega}I_i$ and $\bm\alpha$ is given by \eqref{alpha-eq}.  Furthermore, setting 
    \begin{equation}\label{tilde-s-i-def}
        \tilde{\bm S}=\frac{\bm S}{\kappa^*}\quad \text{and}\quad \tilde{\bm I}=\frac{\bm I}{\kappa^*},
    \end{equation}
    then $\tilde{\bm I}$ is the positive solution of \eqref{Eq1} with $l={\kappa^*}/{d_S}$. Moreover, ${\kappa^*}/{d_S}>{1}/{\mathcal{R}_0}$.
    
    \item[\rm (ii)] If $l>{1}/{\mathcal{R}_0}$ and  
    \begin{equation}\label{N-equation}
        N=l\left[\sum_{j\in\Omega}(N\alpha_j-d_I U_j^l)+d_S\sum_{j\in\Omega} U_j^l\right],
    \end{equation}
    then $(\bm S,\bm I):=(l(N\bm\alpha -d_I\bm U^l),d_Sl\bm U^l)$ is an EE solution of \eqref{model-mass-action}, where $\bm U^l$ is the solution of \eqref{Eq1}.
\end{itemize}
\end{lem}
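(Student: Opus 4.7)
My plan is to treat the two parts in sequence; both reduce to direct algebraic manipulations once the right observation is made.

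\textbf{Part (i).} The starting move is to add the two equilibrium equations of \eqref{model-mass-action} to cancel the reaction terms, obtaining
\[
d_S\mathcal{L}\bm S + d_I\mathcal{L}\bm I = \mathcal{L}\bm\kappa = \bm 0.
\]
Since {\bf (A1)} gives that $\mathcal{L}$ is quasi-positive and irreducible, the Perron--Frobenius theorem (already cited in the excerpt for the eigenvector $\bm\alpha$) tells us that the kernel of $\mathcal{L}$ is one-dimensional and spanned by $\bm\alpha$. Because $\bm S,\bm I$ are EE components and are therefore strictly positive by the remarks following (A1)--(A3), the vector $\bm\kappa$ is strictly positive, so it is a positive multiple of $\bm\alpha$; write it as $\kappa^*N\bm\alpha$ with $\kappa^*>0$. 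To extract the explicit formula for $\kappa^*N$, I will sum the identity $\bm\kappa=\kappa^*N\bm\alpha$ over $j\in\Omega$, use $\sum_j\alpha_j=1$, and then replace $\sum_j S_j$ by $N-\sum_j I_j$ using the conservation \eqref{Eq1:1}.

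The next task is to identify the equation satisfied by $\tilde{\bm I}=\bm I/\kappa^*$. The relation $d_S\tilde{\bm S}+d_I\tilde{\bm I}=N\bm\alpha$ expresses $\tilde{\bm S}=(N\bm\alpha-d_I\tilde{\bm I})/d_S$. Substituting this into the infected equation of \eqref{model-mass-action} and dividing through by $\kappa^*$, I obtain
\[
d_I\mathcal{L}\tilde{\bm I} + \bigl(\tfrac{\kappa^*}{d_S}\bm\beta\circ(N\bm\alpha - d_I\tilde{\bm I})-\bm\gamma\bigr)\circ\tilde{\bm I}=\bm 0,
\]
which is precisely \eqref{Eq1} with $l=\kappa^*/d_S$. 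Since $\tilde{\bm I}\gg\bm 0$, Lemma~\ref{lem2}(i) forces $\kappa^*/d_S > 1/\mathcal{R}_0$, and uniqueness of the positive solution of \eqref{Eq1} identifies $\tilde{\bm I}$ with $\bm U^{\kappa^*/d_S}$.

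\textbf{Part (ii).} Given the hypotheses, set $\bm I=d_Sl\bm U^l$ and $\bm S=l(N\bm\alpha-d_I\bm U^l)$. By Lemma~\ref{lem2}(i), $\bm 0<d_I\bm U^l<N\bm\alpha$, so both $\bm S$ and $\bm I$ lie in the positive cone, and the hypothesis \eqref{N-equation} is exactly $\sum_j(S_j+I_j)=N$. It remains to verify the two stationary equations. For the infected equation, factoring $d_Sl$ from every term reduces it to $d_I\mathcal{L}\bm U^l+(l\bm\beta\circ(N\bm\alpha-d_I\bm U^l)-\bm\gamma)\circ\bm U^l=\bm 0$, which is \eqref{Eq1}. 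For the susceptible equation, I will use $\mathcal{L}(N\bm\alpha)=\bm 0$ from \eqref{alpha-eq} to simplify $d_S\mathcal{L}\bm S=-d_Sld_I\mathcal{L}\bm U^l$ and then substitute \eqref{Eq1} to replace $d_I\mathcal{L}\bm U^l$; a direct comparison with the reaction term $-\bm\beta\circ\bm I\circ\bm S+\bm\gamma\circ\bm I=d_Sl\bigl(\bm\gamma-l\bm\beta\circ(N\bm\alpha-d_I\bm U^l)\bigr)\circ\bm U^l$ shows the two cancel. So $(\bm S,\bm I)$ is an EE.

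The only genuinely delicate point I anticipate is keeping the scaling clean: the inverse change of variables $(\bm S,\bm I)\mapsto(\tilde{\bm S},\tilde{\bm I})$ introduces the unknown $\kappa^*$, which in turn depends on $\bm I$ through $\sum I_j$, so I need to argue part (i) in the order above (existence of $\kappa^*>0$ first via the kernel argument, then the equation for $\tilde{\bm I}$, then the threshold $\kappa^*/d_S>1/\mathcal{R}_0$) to avoid any circularity. Everything else is a matter of bookkeeping with the Hadamard product.
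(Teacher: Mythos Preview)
Your proposal is correct and follows essentially the same approach as the paper: both argue via $\mathcal{L}\bm\kappa=\bm 0$ and Perron--Frobenius for part (i), and both reduce part (ii) to verifying the equilibrium equations directly. You simply spell out the verification of the susceptible equation in part (ii) where the paper leaves it as ``straightforward to verify''.
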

\begin{proof} {\rm (i)}  Let $(\bm S,\bm I)$ be an EE solution of \eqref{model-mass-action} and $\bm \kappa$ be  defined by \eqref{kappa-def}. Then $\mathcal{L}\bm\kappa=0$. By the Perron-Frobenius theorem, there is a real number $\kappa^*$ such that $\bm\kappa=\kappa^*N\bm\alpha$. Since  $\bm S, \bm I\gg \bm 0$,  we have $\bm\kappa\gg\bm 0$ and $\kappa^*>0$. By $\sum_{i\in\Omega}(S_i+I_i)=N$, it is easy to see  $\kappa^*N=d_SN+(d_I-d_S)\sum_{i\in\Omega}I_i$. 

Let $(\tilde{\bm S},\tilde{\bm I})$ be defined as in \eqref{tilde-s-i-def}.  By \eqref{kappa-def},  $\bm S=\frac{\kappa^*}{d_S}(N\bm\alpha-d_I\tilde{\bm I})$. Taking $l={\kappa^*}/{d_S}$ and by the second equation of \eqref{model-mass-action},  $\tilde{\bm I}$ solves \eqref{Eq1}. Since  \eqref{Eq1} has a positive solution  $\tilde{\bm I}$, Lemma \ref{lem2}-{\rm (i)} implies that ${\kappa^*}/{d_S}>{1}/{\mathcal{R}_0}$.

{\rm (ii)} Suppose that $l>{1}/{\mathcal{R}_0}$. By Lemma \ref{lem2}, \eqref{Eq1} has a positive solution $\bm U^l$. 
 If $\bm U^l$ satisfies \eqref{N-equation},  it is straightforward to verify that $(\bm S,\bm I)=(l(N\bm\alpha-d_I\bm U^l),ld_S\bm U^l)$ is an EE solution of \eqref{model-mass-action}.
\end{proof}

\section{Global stability and EE solutions}
For simple epidemic differential equation models, one usually expects that the global dynamics is determined by $\mathcal{R}_0$: if $\mathcal{R}_0< 1$, the DFE is globally stable; if $\mathcal{R}_0>1$, the model has a globally stable (and unique) EE. This is true if model  \eqref{model-mass-action} has only one patch. However when $n\ge 2$, there are certain parameter ranges that do not fit the expectations.

\subsection{Case $\mathcal{R}_0\le 1$}

Recall that the DFE ${\bf E}^0:=({N}\bm\alpha, {\bf 0})$ is unique. Firstly, we find parameter ranges such that the DFE is globally stable when $\mathcal{R}_0\le 1$. 

\begin{tm}[Global stability of DFE when $\mathcal{R}_0\le 1$]\label{T1-mass-action-DEF-stability} Suppose that {\bf (A1)-(A3)} holds. The DFE is linearly stable if $\mathcal{R}_0<1$ and unstable if $\mathcal{R}_0>1$. Furthermore, the DFE is globally asymptotically stable if one of the following conditions hold:
\begin{itemize}
    \item[\rm (i)]  $N\rho({\rm diag}(\bm \beta)\mathcal{V}^{-1})\le 1$;
    
    \item[\rm (ii)]  $\mathcal{R}_0<1$ and  $d_S>d_{\rm low}(d_I,N)$, for some positive number $d_{\rm low}(d_I,N)$ depending on $d_I$ and $N$;
    
    \item[\rm (iii)]  $\mathcal{R}_0\le 1$ and $d_S=d_I$;
    
    \item[\rm (iv)] $\mathcal{R}_0\le 1$ and $\bm\gamma=m\bm\beta\circ\bm\alpha$ for some positive number $m>0$.
\end{itemize}
\end{tm}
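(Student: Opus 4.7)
The plan is to unify the four cases under a single Lyapunov framework: in each case I exhibit a functional of the form $V_1(t)=\langle\bm\phi,\bm I(t)\rangle$ with $\bm\phi\gg\bm 0$ taken as the left Perron eigenvector of an appropriate threshold matrix. Once $V_1(t)\to 0$ (hence $\bm I(t)\to\bm 0$, since $\bm\phi\gg\bm 0$) is established, the $\bm S$-equation reads $\bm S'=d_S\mathcal{L}\bm S+\bm G(t)$ with $\bm G(t):=-\bm\beta\circ\bm I\circ\bm S+\bm\gamma\circ\bm I\to\bm 0$, so Lemma \ref{lem0} combined with mass conservation \eqref{Eq1:1} forces $\bm S(t)\to N\bm\alpha$. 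The linear stability assertions follow immediately from the standard next-generation characterization recalled after \eqref{R-0}.

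For (i), the bound $S_i(t)\le N$ from \eqref{Eq1:1} yields $\bm I'\le d_I\mathcal{L}\bm I+(N\bm\beta-\bm\gamma)\circ\bm I$ componentwise. The hypothesis $N\rho({\rm diag}(\bm\beta)V^{-1})\le 1$ is equivalent to $\sigma_1:=\sigma_*(-V+N\,{\rm diag}(\bm\beta))\le 0$, and with $\bm\phi$ the left Perron eigenvector of that matrix one computes
\begin{equation*}
V_1'=\sigma_1V_1-\sum_i\phi_i\beta_i(N-S_i)I_i\le-\sum_i\phi_i\beta_i(N-S_i)I_i.
\end{equation*}
Since $N-S_i=\sum_{j\ne i}S_j+\sum_jI_j\ge I_i$, the right-hand side dominates $c_0V_1^2$ by norm equivalence on $\mathbb{R}^n$, and the Bernoulli inequality delivers $V_1(t)\to 0$. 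Case (iii) proceeds in the same spirit: $d_S=d_I=d$ gives the closed linear equation $(\bm S+\bm I)'=d\mathcal{L}(\bm S+\bm I)$, so Lemma \ref{lem0} makes $\bm S+\bm I\to N\bm\alpha$ exponentially, and choosing $\bm\phi$ as the left Perron eigenvector of $-V+F$ (eigenvalue $\sigma\le 0$ since $\mathcal{R}_0\le 1$) yields
\begin{equation*}
V_1'\le\sigma V_1-\langle\bm\phi,\bm\beta\circ\bm I\circ\bm I\rangle+\varepsilon(t)V_1,\qquad\varepsilon(t)\to 0,
\end{equation*}
where the negative quadratic term together with the decay of $\varepsilon$ forces $\limsup V_1\le\eta$ for every $\eta>0$, handling both the strict and the critical subcase.

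For (ii), the strategy is to exploit the contractivity \eqref{RT1} of $e^{td_S\mathcal{L}}$ off ${\rm span}(\bm\alpha)$: applied to $({\rm id}-P)\bm S$ with the a priori bound $\|\bm G(t)\|_\infty\le C$ inherited from $\bm S+\bm I\equiv N$, it gives $\|({\rm id}-P)\bm S(t)\|_1\le C/(d_S\nu)$ for $t\ge 1$, so $\bm S=(N-\|\bm I\|_1)\bm\alpha+\bm W$ with $\|\bm W\|_\infty=O(1/d_S)$ uniformly in $t$. The Lyapunov with $\bm\phi$ the left Perron eigenvector of $-V+F$ (eigenvalue $\sigma_1<0$ because $\mathcal{R}_0<1$) then obeys $V_1'\le(\sigma_1+C'/d_S)V_1$, and $d_{\rm low}(d_I,N):=2C'/|\sigma_1|$ does the job. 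For (iv), the substitution $\bm Y=\bm S-m\bm\alpha$ turns the system into
\begin{equation*}
\bm Y'=d_S\mathcal{L}\bm Y-\bm\beta\circ\bm I\circ\bm Y,\qquad\bm I'=d_I\mathcal{L}\bm I+\bm\beta\circ\bm I\circ\bm Y,
\end{equation*}
together with the conservation identity $\sum_i(Y_i+I_i)=N-m\le 0$. A combined functional
\begin{equation*}
\Phi(t)=\langle\bm\phi,\bm I\rangle+\tfrac12\langle\bm\psi,\bm Y\circ\bm Y\rangle,
\end{equation*}
with $\bm\phi\gg\bm 0$ the left Perron eigenvector of $-V+F=d_I\mathcal{L}+(N-m){\rm diag}(\bm\alpha\circ\bm\beta)$ and $\bm\psi\gg\bm 0$ chosen so that $\bm Y^T{\rm diag}(\bm\psi)\mathcal{L}\bm Y\le 0$ and the cross terms $\sum\phi_i\beta_iI_iY_i$ cancel against their $\bm Y$-counterparts, drives $\bm I\to\bm 0$.

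The main obstacle is the asymmetry of $\mathcal{L}$: the Lyapunov used in \cite{li2023sis} to handle (iv) rests on $\bm Y^T\mathcal{L}\bm Y\le 0$, which can fail here, so $\bm\psi$ must be selected using the structure of $\mathcal{L}$ (for instance via a left eigenvector of the symmetrized operator ${\rm diag}(\bm\alpha)^{-1}\mathcal{L}^T+\mathcal{L}\,{\rm diag}(\bm\alpha)^{-1}$) rather than taken uniform, which is the novel ingredient beyond \cite{li2023sis}. The parallel delicate point is the uniform-in-$d_S$ propagation of the $O(1/d_S)$ deviation of $\bm S$ from ${\rm span}(\bm\alpha)$ in (ii); both rely on coupling Lemma \ref{lem0} and Harnack's inequality (Lemma \ref{Harnck-lemma}) to upgrade integral to pointwise control on $\bm I$ and $\bm Y$ along orbits.
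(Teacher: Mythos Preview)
Your treatment of (i)--(iii) is a legitimate alternative to the paper's comparison arguments. The paper handles (i) via a pointwise envelope $c(t)=\inf\{\tilde c:\bm I(t)\le\tilde c\tilde{\bm E}\}$ together with Lemma~\ref{lem4}, handles (iii) by sandwiching $\bm I$ between solutions of perturbed logistic systems, and handles (ii) essentially as you do. Your linear Lyapunov $V_1=\langle\bm\phi,\bm I\rangle$ with a left Perron eigenvector, combined with the quadratic damping from $N-S_i\ge I_i$ (or from $\bm S+\bm I\to N\bm\alpha$ in (iii)), gives the Bernoulli-type inequality $V_1'\le -c_0V_1^2+\varepsilon(t)V_1$ and is arguably cleaner, especially in the critical case $\mathcal{R}_0=1$.

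Part (iv), however, has a genuine gap. With $\bm Y=\bm S-m\bm\alpha$ your functional $\Phi=\langle\bm\phi,\bm I\rangle+\tfrac12\langle\bm\psi,\bm Y\circ\bm Y\rangle$ produces
\[
\Phi'=\langle\bm\phi,d_I\mathcal{L}\bm I\rangle+\sum_i\phi_i\beta_iI_iY_i+d_S\langle\bm\psi\circ\bm Y,\mathcal{L}\bm Y\rangle-\sum_i\psi_i\beta_iI_iY_i^2.
\]
The ``$\bm Y$-counterpart'' you invoke to absorb the indefinite cross term $\sum_i\phi_i\beta_iI_iY_i$ is $-\sum_i\psi_i\beta_iI_iY_i^2$, which is \emph{quadratic} in $Y_i$ and cannot cancel a term linear in $Y_i$; no single choice of $\bm\psi$ achieves this for all $(\bm Y,\bm I)$. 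Separately, the existence of $\bm\psi\gg\bm0$ with $\bm Y^T{\rm diag}(\bm\psi)\mathcal{L}\bm Y\le 0$ (diagonal Lyapunov semistability of $\mathcal{L}$) is asserted but not proved, and your suggested recipe via a symmetrized operator is not shown to yield a positive vector. The paper avoids Lyapunov functionals here entirely: it rewrites the $\bm S$-equation as $\bm S'=d_S\mathcal{L}\bm S+\bm\beta\circ(m\bm\alpha-\bm S)\circ\bm I$, builds the explicit subsolution $\underline{\bm S}(t)=m\bm\alpha-m e^{-\bm\beta_m\int_1^t\bm I_m(s)\,ds}\bm\alpha$, and combines this with Harnack (Lemma~\ref{Harnck-lemma}) and Lemma~\ref{lem4} to force $\sum_j I_j(t)\to 0$. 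That comparison argument is the missing idea in your sketch of (iv).
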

\begin{proof} The linear stability/instability of the DFE in terms of $\mathcal{R}_0$ is standard \cite[Theorem 2]{Driessche}. So it remains to prove the global stability of the DFE if one of {\rm (i)}-{\rm (iv)} holds.

{\rm (i)} Suppose that $N\rho({\rm diag}(\bm\beta)\mathcal{V}^{-1})\le 1$. Let $(\bm S(t), \bm I(t))$ be a solution of \eqref{model-mass-action}. Let $\tilde{\bm E}\in\mathbb{R}^n$ be the positive eigenvector associated with $\sigma_*(d_I\mathcal\mathcal{L} +{\rm diag}(N\bm\beta -\bm\gamma) )$ with $\tilde{\bm E}_m=1$. {Define  
$$
\bar{\bm I}(t):=e^{t\sigma_*(d_I\mathcal\mathcal{L}+{\rm diag}(N \bm \beta-\bm \gamma))}\tilde{\bm E},\quad\forall\ t\ge0.
$$
It holds that
\begin{equation}\label{KPL1}
    \frac{d \bar{\bm I}}{dt}=d_I\mathcal{L}\bar{\bm I}+(N\bm \beta-\gamma)\circ \bar{\bm I},\quad \forall\ t\ge 0.
\end{equation}
Let 
$$
c(t):=\inf\{\tilde c>0:\ \bm I(t)\le \tilde c\tilde{\bm E}\}, \quad t\ge 0.
$$
Then, 
\begin{equation}\label{DE1}
    \bm I(t)\le c(t)\tilde{\bm E},\quad \forall\ t\ge 0.
\end{equation}
By the definition of $c(t)$, there is $j_t\in\Omega$ such that  $I_{j_t}(t)=c(t)\tilde{ E}_{j_t}$ for all $t\ge 0$.
It follows that 
$$
\sum_{j\in\Omega} I_j(t)\ge  I_{j_t}(t)= c(t)\tilde{ E}_{j_t}\ge c(t)\min_{j\in\Omega}\tilde{ E}_j=c(t)\tilde{\bm E}_m=c(t),\quad \forall\ t\ge 0.
$$
As a result,
\begin{equation*}
    \sum_{j\in\Omega} S_j(t)=N-\sum_{j\in\Omega} I_j(t)\le N-c(t),\quad \forall\ t\ge 0.
\end{equation*}
This in turn implies that
\begin{equation}\label{KPL2}
   \bm I'\le d_I\mathcal{L}\bm I+((N-c(t))\bm \beta-\bm \gamma)\circ \bm I\le d_I\mathcal{L}\bm I+(N\bm\beta-\bm \gamma)\circ \bm I-\bm\beta_mc(t)\bm I,\quad \forall\ t>0.
\end{equation}
Hence, setting 
$$ 
\underline{\bm I}(t+t_0;t_0):=e^{\bm\beta_m\int_{t_0}^{t_0+t}c(s)ds}\bm I(t+t_0),\quad\ \forall\, t,t_0\ge0,
$$
we obtain from \eqref{KPL2} that 
\begin{eqnarray}\label{KPL3}
    \frac{d\underline{\bm I}(t+t_0;t_0)}{dt}\le d_I\mathcal{L}\underline{\bm I}(t+t_0;t_0)+(N\bm\beta-\gamma)\circ\underline{\bm I}(t+t_0;t_0),\quad \forall \, t>0,t_0\ge0.
\end{eqnarray}
Observing that 
$$
\underline{\bm I}(t_0;t_0)=\bm I(t_0)\le c(t_0)\tilde{\bf E}=c(t_0)\bar{\bm I}(0),
$$
it follows from \eqref{KPL1}, \eqref{KPL3}, and  the comparison principal for monotone dynamical systems that 
$$
\underline{\bm I}(t+t_0;t_0)\le c(t_0)\bar{\bm I}(t),\quad \forall \,t>0,t_0\ge 0.
$$
Equivalently,
\begin{equation}\label{DE2}
e^{\bm\beta_m\int_{t_0}^{t_0+t}c(s)ds}\bm I(t+t_0)\le c(t_0)e^{t\sigma_*(d_I\mathcal\mathcal{L}+{\rm diag}(N \bm \beta-\bm \gamma))}\tilde{\bm E},\quad \forall\ t_0,t\ge 0.
\end{equation}
}
Recall that $ \sigma_*(d_I\mathcal\mathcal{L}+{\rm diag}(N \bm \beta-\bm \gamma))\le 0$ since $N\rho({\rm diag}(\bm \beta)\mathcal{V}^{-1})\le 1$. Then by the definition of $c(t)$, we get from \eqref{DE2} that 
$$
c(t_0+t)\le c(t_0)e^{-\bm\beta_m\int_{t_0}^{t_0+t}c(s)ds}, \quad \forall\ t,t_0\ge 0.
$$
This implies that $c(t)$ is nonincreasing in $t$ and 
$$
c(t)\le c(0)e^{-\bm\beta_m\int_{0}^tc(s)ds},\quad \forall\ t\ge 0.
$$
Hence by Lemma \ref{lem4},  $c(t)\to0$ as $t\to\infty$. We then conclude from \eqref{DE1} that $\|\bm  I(t)\|_\infty\to 0$ as $t\to\infty$.

It remains to show that $\bm S(t)\to {N}\bm\alpha$ as $t\to\infty$. By Lemma \ref{lem0}, we know that
$\bm  S(t)-\bm\alpha{\sum_{j\in\Omega} S_j(t)}\to 0$ as $t\to\infty$. Since $\sum_{i}( S_i(t)+ I_i(t))=N$ for all $t\ge 0$, we have $\sum_{i\in\Omega} S_i(t)\to N$ as $t\to\infty$. It follows that $\bm S(t)\to {N}\bm\alpha$ as $t\to\infty$.

{\rm (ii)} Fix $d_I, N>0$ such that $\mathcal{R}_0<1$.  Taking $t=0$ in \eqref{RT5-2} with $\bm X$ replaced by $\bm S$, we obtain that 
\begin{equation*}
    \sup_{\tau\ge 1}\|({\rm id}-P)\bm S(\tau)\|_1\le NM_{\nu}(1+\|\bm\gamma\|_{\infty}+N\|\bm\beta\|_{\infty})\Big(e^{-d_S\nu}+\frac{1}{d_S\nu}\Big):=\tilde{M}_{\nu}\Big(e^{-d_S\nu}+\frac{1}{d_S\nu}\Big),
\end{equation*}
where we have used the fact that $\|\bm S(t)+\bm I(t)\|_1=N$ for all $t\ge 0$. Next, we observe that  
\begin{align*}
N=&\sum_{j\in\Omega}\big<\bm e_j,P\bm S(t)\big>+\sum_{j\in\Omega}\big<\bm e_j,({\rm id}-P)\bm S(t)\big>+\sum_{j\in\Omega} I_{j}\cr 
=&g_S(t)+\sum_{j\in\Omega}\big<\bm e_j,({\rm id}-P)\bm S(t)\big>+\sum_{j\in\Omega} I_{j},
\end{align*}
where   $\{\bm e_1,\cdots,\bm e_n\}$ is the canonical basis of $\mathbb{R}^n$ and $g_S(t)\bm\alpha=P\bm S(t)$. 
Then
\begin{align*}
    \bm I'=&d_I\mathcal{L}\bm I+\Big(\bm \beta \circ P\bm S(t)+\bm \beta\circ ({\rm id}-P)\bm S-\bm \gamma )\circ \bm I\cr
    =& d_I\mathcal{L}\bm I+\big(g_S(t)\bm \beta \circ \bm \alpha+\bm \beta\circ ({\rm id}-P)\bm S-\bm \gamma \big)\circ\bm  I\cr 
    =& d_I\mathcal{L}\bm I+\Big(N\bm \beta \circ \bm \alpha-\bm \gamma+\bm \beta\circ\Big( ({\rm id}-P)\bm S-\bm\alpha\sum_{j\in\Omega}I_j -\bm\alpha\sum_{j\in\Omega}\big<\bm e_j,({\rm id}-P)\bm S\big>\Big) \Big)\circ \bm I\cr 
    \le & d_I\mathcal{L}\bm I+\Big(N\bm \beta \circ \bm \alpha-\bm \gamma+2\|({\rm id}-P)\bm S\|_1\bm \beta \Big)\circ\bm  I\cr
    \le & d_I\mathcal{L}\bm I+\Big(N\bm \beta \circ \bm \alpha-\bm\gamma+2\|\bm \beta\|_{\infty}\tilde{M}_{\nu}\Big(e^{-d_S\nu}+\frac{1}{d_S\nu}\Big){\bf 1} \Big)\circ \bm I,\quad \forall\ t\ge 1.
\end{align*}
Let $\hat{\bm E}\in \mathbb{R}^n$ be the positive eigenvector of $\sigma_*(d_I\mathcal{L}+{\rm diag}(N\bm \beta\circ\bm\alpha-\bm \gamma))$ satisfying $\hat{\bm E}_m=1$. By $\bm I(1)\le N\bm 1\le  N\hat{\bm E}$ and  the comparison principle for cooperative systems, we get  
\begin{align}\label{RT6}
\bm I(t+1)\le Ne^{2\|\bm \beta\|_{\infty}\tilde{M}_{\nu}\Big(e^{-d_S\nu}+\frac{1}{d_S\nu}\Big)t}e^{\sigma_*t}\hat{\bm E},\quad t\ge 0,
\end{align}
where $\sigma_*:=\sigma_*(d_I\mathcal{L}+{\rm diag}(N\bm \beta\circ\bm\alpha-\bm \gamma))$. Since $\mathcal{R}_0<1$ and $\sigma_*$
has the same sign as $\mathcal{R}_0-1$,  we have $\sigma_*<0$. So there exists  $d_{\rm low}(d_I,N)>0$ such that 
$$
\sigma_*+2\|\bm \beta\|_{\infty}\tilde{M}_{\nu}\Big(e^{-d_S\nu}+\frac{1}{d_S\nu}\Big)<0, \quad d_S>d_{\rm low}(d_I,N).
$$
It then follows from \eqref{RT6} that $\|\bm I(t)\|_1\to 0$ as $t\to\infty$ for any $d_S\ge d_S^*$. We can  proceed as in the proof of {\rm (i)} to conclude that $\bm S(t)\to N\bm \alpha$ as $t\to\infty$.

{\rm (iii)} Suppose that $d:=d_S=d_I$ and $\mathcal{R}_0\le 1$.  Let $\bm Z(t)=\bm S(t)+\bm I(t)$. Then $
\bm Z'=d\mathcal{L}\bm Z, \quad t\ge 0$.
By $\sum_{j\in\Omega} Z_j(t)=N$ for all $t\ge 0$ and  Lemma \ref{lem0}, we have
\begin{equation}\label{RT7}
    \bm Z(t)=\bm S(t)+\bm I(t)\to N\bm\alpha \quad \text{as}\quad t\to\infty.
\end{equation}
Let $0<\varepsilon\ll 1$ be given. Then there is $t_{\varepsilon}>0$ such that 
\begin{equation}\label{RT7-2}
 (1-\varepsilon)N\bm \alpha-\bm I(t)\le \bm S(t)\le (1+\varepsilon)N\bm \alpha-\bm I(t), \quad \forall\ t\ge t_{\varepsilon}.
 \end{equation}
This  implies that 
\begin{equation}\label{RT8}
    \bm I'\le d_I\mathcal{L}\bm I+\big( \bm\beta\circ\big((1+\varepsilon)N\bm\alpha -\bm I(t) \big)-\bm \gamma\big)\circ\bm  I,\quad \forall\ t\ge t_{\varepsilon}.
\end{equation}
Let $\bm I_{\varepsilon}$ be the unique nonnegative stable equilibrium of the logistic type system
\begin{equation}\label{RT9}
    \bm I'=d_I\mathcal{L}\bm I+\big( \bm \beta\circ\big((1+\varepsilon)N\bm\alpha -\bm I(t) \big)-\bm \gamma\big)\circ \bm I.
\end{equation}
Then $\bm I_{\varepsilon}$ is nonincreasing in $\varepsilon>0$, and $\bm I_\epsilon\to\bm I_0$ as $\varepsilon\to 0$, where $\bm I_{0}$ solves \eqref{RT9} with $\varepsilon=0$. By \eqref{RT8} and the comparison principle, we have that 
$$
\limsup_{t\to\infty}\|\bm I(t)\|_1\le \|\bm  I_{\varepsilon}\|_1,\quad \forall\ \varepsilon>0.
$$
Taking $\varepsilon\to0$  gives \begin{equation}\label{RT10}
    \limsup_{t\to\infty}\|\bm I(t)\|_1\le \|\bm I_0\|_1.
\end{equation}
Since $\mathcal{R}_0\le 1$, we have $\sigma_*\le 0$ and $\bm I_0={\bf 0}$. So \eqref{RT10} yields that $\|\bm I(t)\|_1\to 0$ as $t\to\infty$. This together with \eqref{RT7} yields that $\bm S(t)\to N\bm\alpha$ as $t\to\infty$.

{\rm (iv)} Suppose that $\mathcal{R}_0\le 1$ and $\bm \gamma=m\bm \beta\circ \bm\alpha$ for some $m>0$. In this case, we have that $\mathcal{R}_0={N}/{m}$, which yield that $m\ge N$. We can rewrite the equation of $\bm S$ as
\begin{equation}\label{RT12}
    \frac{d\bm S}{dt}=d_S\mathcal{L}\bm S+\bm\beta\circ(m \bm\alpha -\bm S)\circ \bm I,\quad t>0.
\end{equation}
Since $\sup_{t\ge 0}\|(\bm\beta\circ \bm S(t)-\bm \gamma)\circ \bm I(t)\|_{\infty}\le N(\|\bm\beta\|_{\infty}+N\|\bm\gamma\|_{\infty})<\infty$, it follows from Lemma \ref{Harnck-lemma} that there is  $c_0>0$ such that 
\begin{equation}\label{RT13}
    \|\bm  I(t)\|_{\infty}\le c_0 \bm I_m(t),\quad \forall\ t\ge 1.
\end{equation}
Define $ \underline{\bm S}(t)=m\bm\alpha - me^{-{\bm\beta}_m\int_{1}^t \bm I_m(s)ds}\bm\alpha$, $t\ge 1$. Then $ \underline{\bm S}(1)={\bf 0}<\bm S(1)$ and 
\begin{align}
    \frac{d \underline{\bm S}}{dt}-d\mathcal{L}\underline{\bm S}-\bm \beta\circ(m\bm\alpha-\underline{\bm S})\circ \bm I =& m{\bm\beta}_m \bm I_m(t)e^{-{\bm\beta}_m\int_{1}^t \bm I_m(s)ds}\bm\alpha- me^{-{\bm\beta}_m\int_{1}^t \bm I_m(s)ds}\bm \beta\circ\bm\alpha\circ \bm I(t)\cr
    =& me^{-{\bm \beta}_m\int_{1}^t \bm I_m(s)ds}\Big({\bm \beta}_m \bm I_m(t){\bf 1}-\bm \beta\circ \bm I(t)\Big)\circ\bm\alpha\le {\bf 0}.
\end{align}
Hence by the comparison principle for monotone systems, we have  
\begin{equation}\label{Sl0}
\underline{\bm S}(t)\le \bm S(t), \quad \forall\ t\ge 1,
\end{equation}
which implies that 
\begin{equation}\label{RT13-2}
   m(1-e^{-{\bm\beta}_m\int_{1}^t \bm I_m(s)ds})=\sum_{j\in\Omega}\underline{S}_j(t)\le \sum_{j\in\Omega} S_j(t)=N-\sum_{j\in\Omega} I_j(t),\quad \forall\ t\ge 1.
\end{equation}
Define $T(t):=\sum_{j\in\Omega} I_j(t)$ for $t\ge 0$.  By \eqref{RT13}, \eqref{RT13-2}, and $N\le m$, we have
\begin{align*}
    T(t)\le &  me^{-{\bm\beta}_m\int_{1}^t \bm I_m(s)ds}+N-m\cr 
    \le & me^{-{\bm\beta}_m
    \int_{1}^t \bm I_m(s)ds}\cr
    \le &  me^{-\frac{{\bm \beta}_m}{nc_0}\int_1^{t}T(s)ds}. 
\end{align*}
Hence,
\begin{equation}\label{RT14}
T(t) e^{\frac{{\bm \beta}_m}{nc_0}\int_1^tT(s)ds}\le m,\quad \forall\ t\ge 1. 
\end{equation}
Observing that 
$$ 
\sup_{t\ge 1}|T'(t)|=\sup_{t\ge 1}|\sum_{j\in\Omega}(\beta_j S_j-\gamma_j) I_j(t)|\le (\|\bm \beta\|_{\infty}N+\|\bm \gamma\|_{\infty})N<\infty,
$$
$T:\mathbb{R}_+\to\mathbb{R}_+$ is  H\"older continuous. 
It follows from \eqref{RT14} and Lemma \ref{lem4} that $T(t)=\|\bm I(t)\|_1\to 0$ as $t\to\infty$. We can now proceed as in the proof of {\rm (i)} to deduce that $\bm S(t)\to N\bm\alpha$ as $t\to\infty$.   
\end{proof}

\begin{rk}
Theorem \ref{T1-mass-action-DEF-stability} provides some sufficient conditions  under which the unique DFE is globally asymptotically stable when $\mathcal{R}_0\le 1$. These results extend some of the known results when $L$ is symmetric. In particular, if $L$ is symmetric, Theorem \ref{T1-mass-action-DEF-stability}-{\rm (i-1)} $\&$ {\rm (i-2)} recover \cite[Theorem 2.1-{\rm (ii-1) $\&$ {\rm (ii-2)}} ]{DBS2023}, and Theorem \ref{T1-mass-action-DEF-stability}-{\rm (i-3)} $\&$ {\rm (i-4)} recover \cite[Theorem 2 {\rm (i)} and Theorem 3-{\rm (i)}]{li2023sis}. We point out that many techniques  for the  symmetric case don't work for the general case.  So we have developed new tools here, which might be helpful to handle similar problems in the future.
\end{rk}

Under any of the assumptions (i)-(iv) in Theorem \ref{T1-mass-action-DEF-stability}, the DFE is globally stable and hence EE does not exist. In the following result, we identify  more parameter ranges such that  EE does not exist. We conjecture that  the DFE is globally stable in these parameter ranges.  

\begin{prop}[Non-existence of EE when $\mathcal{R}_0\le 1$]\label{T1-mass-action-EE-Non-Existence} Suppose that {\bf (A1)-(A3)} holds and $\mathcal{R}_0\le 1$. Then \eqref{model-mass-action} has no EE solution under one of the following assumptions: 
\begin{itemize}
    \item[\rm (i)] $d_S\ge d_I\mathcal{R}_0$;
    \item[\rm (ii)] $N\le ({\bf r}/{\bm \alpha})_m$.
\end{itemize}    
\end{prop}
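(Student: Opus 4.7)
In both parts I argue by contradiction: assume an EE $(\bm S,\bm I)$ of \eqref{model-mass-action} exists, so $\bm S,\bm I\gg\bm 0$ and $\sum_j(S_j+I_j)=N$.

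For (i), the key input is Lemma~\ref{lem3}(i), which produces a scalar $\kappa^*>0$ with $d_S\bm S+d_I\bm I=\kappa^*N\bm\alpha$ and satisfying
\[
\kappa^*=d_S+(d_I-d_S)\frac{T}{N},\qquad \kappa^*>\frac{d_S}{\mathcal{R}_0},\qquad T:=\sum_{j\in\Omega}I_j\in(0,N).
\]
The first identity displays $\kappa^*$ as a convex combination of $d_S$ and $d_I$, so $\kappa^*\le\max\{d_S,d_I\}$. On the other hand, $\mathcal{R}_0\le 1$ yields $d_S/\mathcal{R}_0\ge d_S$, and the hypothesis $d_S\ge d_I\mathcal{R}_0$ yields $d_S/\mathcal{R}_0\ge d_I$, so the second inequality above forces $\kappa^*>\max\{d_S,d_I\}$, a contradiction.

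For (ii), I bypass Lemma~\ref{lem3} and run a maximum-principle argument on the quotient $\bm S/\bm\alpha$. The $\bm S$-equation at equilibrium reads $d_S\mathcal{L}\bm S=\bm\beta\circ\bm I\circ(\bm S-\bm r)$. Pick $j_*\in\Omega$ at which $m:=\min_{j\in\Omega}S_j/\alpha_j$ is attained. Using $\mathcal{L}\bm\alpha=\bm 0$, I rewrite
\[
(\mathcal{L}\bm S)_{j_*}=\sum_{i\in\Omega}L_{j_*i}\alpha_i\left(\frac{S_i}{\alpha_i}-m\right).
\]
The $i=j_*$ summand vanishes; every other summand is $\ge 0$ by quasi-positivity of $\mathcal{L}$ and the definition of $m$. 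Hence $(\mathcal{L}\bm S)_{j_*}\ge 0$, so $\beta_{j_*}I_{j_*}(S_{j_*}-r_{j_*})\ge 0$; since $\beta_{j_*}I_{j_*}>0$, this gives $S_{j_*}\ge r_{j_*}$, equivalently $m\ge r_{j_*}/\alpha_{j_*}\ge(\bm r/\bm\alpha)_m$. Therefore $S_j\ge(\bm r/\bm\alpha)_m\,\alpha_j$ for every $j$, and summing together with $\sum_j\alpha_j=1$ and $\sum_jS_j=N-\sum_jI_j<N$ yields $(\bm r/\bm\alpha)_m<N$, contradicting the hypothesis.

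The most delicate step is the sign analysis in (ii): the diagonal entry $L_{j_*j_*}$ is negative, so a naive termwise lower bound $S_i\ge m\alpha_i$ would flip sign at $i=j_*$. Rewriting $(\mathcal{L}\bm S)_{j_*}$ relative to the kernel vector $\bm\alpha$ absorbs the diagonal contribution via $\mathcal{L}\bm\alpha=\bm 0$ and leaves only nonnegative off-diagonal terms. In (i), the only subtle point is that both comparisons $\kappa^*>d_S$ and $\kappa^*>d_I$ are strict, so the convex-combination bound is violated even in the borderline case $d_S=d_I$.
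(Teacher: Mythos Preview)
Your proof is correct. Part (i) is essentially the paper's argument, repackaged: the paper also invokes Lemma~\ref{lem3}(i) to get $l=\kappa^*/d_S>1/\mathcal{R}_0$ and then manipulates the identity $N=l\big[N+(d_S-d_I)\sum_jU_j^l\big]$ together with $d_I\sum_jU_j^l<N$ to reach $\mathcal{R}_0>\min\{1,d_S/d_I\}$. Your convex-combination formulation $\kappa^*=(1-T/N)d_S+(T/N)d_I\le\max\{d_S,d_I\}$ versus $\kappa^*>d_S/\mathcal{R}_0\ge\max\{d_S,d_I\}$ is the same inequality read differently, and is a bit cleaner.

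Part (ii) takes a genuinely different route. The paper passes to the transformed variable $\bm Z^l=l(N\bm\alpha-d_I\bm U^l)$ (which equals $\bm S$), observes that $(\bm r/\bm\alpha)_m\bm\alpha$ is a subsolution of the equation \eqref{Z-l=eq} satisfied by $\bm Z^l$, and invokes the comparison principle to conclude $\bm S=\bm Z^l\ge(\bm r/\bm\alpha)_m\bm\alpha$; summing then contradicts $N\le(\bm r/\bm\alpha)_m$. You reach the same pointwise bound $\bm S\ge(\bm r/\bm\alpha)_m\bm\alpha$ by a direct discrete maximum-principle argument on $\bm S/\bm\alpha$, using $\mathcal{L}\bm\alpha=\bm 0$ to kill the diagonal term at the minimizer. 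Your approach is more self-contained: it avoids the change of variables to $\bm Z^l$ and the appeal to an abstract sub/supersolution comparison, at the cost of redoing that comparison by hand. Both proofs of (ii), incidentally, never use $\mathcal{R}_0\le 1$, so the nonexistence in (ii) holds regardless of $\mathcal{R}_0$.
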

\begin{proof} ]
{\rm (i)} Suppose $d_S\ge d_I\mathcal{R}_0$.  Assume to the contrary that there is an EE solution $(\bm S,\bm I)$ of \eqref{model-mass-action}. Then, by Lemma \ref{kappa-def}-{\rm (i)} that $\tilde{\bm I}=\bm U^{l}$ for some $l=\frac{\kappa^*}{d_S}>\frac{1}{\mathcal{R}_0}$, where $\tilde{\bm I}$ is defined in \eqref{tilde-s-i-def} and $\kappa^*$ is the positive number such that $\kappa^*N\bm\alpha=d_S\bm S+d_I\bm I $. It then follows from \eqref{N-equation} and the fact that $\bm U^{l}<\frac{N}{d_I}\bm \alpha$ (see \eqref{lem2-eq1}) that 
\begin{align}\label{RT40}
    N=\sum_{j\in\Omega}( S_j^l+ I_j^l)=&l\Big[\sum_{j\in\Omega}( N\alpha_j-d_I U^l_j)+d_S\sum_{j\in\Omega} U^l_j\Big] \cr
    =&l\Big[N+(d_S-d_I)\sum_{j\in\Omega} U^l_j\Big]\cr 
    \ge & l\Big[N-(d_S-d_I)_{-}\sum_{j\in\Omega} U^l_j \Big]\cr 
    \ge& l\Big[N-(d_S-d_I)_{-}\sum_{j\in\Omega}\frac{ N\alpha_j}{d_I} \Big]\cr
    =&l\Big(1-\Big(\frac{d_S}{d_I}-1\Big)_{-}\Big)N 
    =\min\Big\{1,\frac{d_S}{d_I}\Big\}lN 
    >\min\Big\{1,\frac{d_S}{d_I}\Big\}\frac{N}{\mathcal{R}_0}.
\end{align}
Hence $\mathcal{R}_0>\min\Big\{1,\frac{d_S}{d_I}\Big\}$, which is impossible since $\mathcal{R}_0\le 1 $ and $d_I\mathcal{R}_0\le d_S$. Therefore, \eqref{model-mass-action} has no EE solution.

{\rm (ii)} Suppose $N\le ({\bf r}/{\bm \alpha})_m$. Assume to the contrary that \eqref{model-mass-action} has an EE for some $d_S>0$. Then by the first line in \eqref{RT40}, there is some $l>1/{\mathcal{R}_0}$ such that 
$$
N=\sum_{j\in\Omega}l(\alpha_j-d_I U_j^l)+ld_S\sum_{j\in\Omega}U^l_j>\sum_{j\in\Omega}l(N \alpha_j-d_I U_j^l)=\sum_{j\in\Omega} Z_j^l,
$$
where $\bm Z^l=l(N\bm \alpha-d_I\bm U^l)$ is as in \eqref{Z-l=eq}.  Since $\underline{\bm Z}:=({\bm r}/{\bm \alpha})_m\bm\alpha$ is a subsolution of \eqref{Z-l=eq}, then 
$$
N>\sum_{j\in\Omega}Z_j^l\ge ({\bf r}/{\bm \alpha})_m\sum_{j\in\Omega}\alpha_j=({\bf r}/{\bm \alpha})_m,
$$
which is a contradiction. Hence \eqref{model-mass-action} has no EE solution.    
\end{proof}

In the following result, we find a parameter range such that the model has multiple EE solutions (this also implies that the DFE is not globally stable) when $\mathcal{R}_0<1$, which is a case that does not fit the usual expectation.
 
\begin{tm}[Existence of multiple EE when $\mathcal{R}_0<1$]\label{T2-mass-action-EE-Multiple} Suppose that {\bf (A1)}-{\bf (A3)} holds, $\sum_{j\in\Omega} r_j<N$, and there exists $d_I>0$ such that $\mathcal{R}_0<1$.  Then there is $d^*>0$ such that  model \eqref{model-mass-action} has at least two EE solutions $(\bm S^{\rm min},\bm I^{\rm min})$ and $(\bm S^{\rm max},\bm I^{\rm max})$ for any $d_S\in (0, d^*)$, which satisfies $\bm I^{\rm min}<\bm I^{\rm max}$. Moreover, any other EE solution $(\bm S,\bm I)$ of \eqref{model-mass-action}, if exists, satisfies $\bm I^{\rm min}\ll\bm I\ll\bm I^{\rm max}$, and  the following statements hold:

\begin{itemize}    
    \item[\rm (i)]  $\bm S^{\rm max}\to\bm r$ and $\bm I^{\rm max}\to {(N-\sum_{j\in\Omega} r_j)}\bm\alpha$ as $d_S\to 0$;
    
      \item[\rm (ii)] $\bm S^{\rm min}\to \bm S^*$ and $\bm I^{\rm min} \to {\bf 0}$  as $d_S\to 0$, where $\bm S^*>{\bf 0}$ satisfies  $\sum_{j\in\Omega} S^*_j=N$ and   $\sigma_*(d_L\mathcal{L}+{\rm diag}(\bm \beta\circ(\bm S^*-\bm r)))=0$.
\end{itemize}  
\end{tm}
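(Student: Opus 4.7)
The plan is to reduce the EE problem, via Lemma~\ref{lem3}, to counting roots of a single scalar equation. Lemma~\ref{lem3}-{\rm (ii)} says that each $l > 1/\mathcal{R}_0$ satisfying
\[
G(l,d_S) := \mathcal{N}(l) + d_S\, l \sum_{j\in\Omega} U^l_j = N
\]
produces the EE $(l(N\bm\alpha - d_I\bm U^l),\, d_S l\bm U^l)$, and Lemma~\ref{lem3}-{\rm (i)} gives the converse, so EE multiplicity coincides with the multiplicity of zeros of $l \mapsto G(l,d_S)-N$ on $(1/\mathcal{R}_0, \infty)$. The strict monotonicity of $l \mapsto \bm U^l$ from Lemma~\ref{lem2}-{\rm (ii)} will then deliver the strict ordering $\bm I^{\min} \ll \bm I \ll \bm I^{\max}$ for any EE strictly between the extreme roots, since $l < l'$ forces $d_S l \bm U^l \ll d_S l' \bm U^{l'}$.

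By Lemma~\ref{lem2}-{\rm (iii)}, $\mathcal{N}(l) \to N/\mathcal{R}_0 > N$ as $l \to (1/\mathcal{R}_0)^+$ (using $\mathcal{R}_0<1$) and $\mathcal{N}(l) \to \sum_{j\in\Omega} r_j < N$ as $l\to\infty$ (by hypothesis), so fix $l_1 > 1/\mathcal{R}_0$ with $\mathcal{N}(l_1) < N$ and choose $d^* > 0$ so small that $d^* l_1 \sum_{j\in\Omega} U^{l_1}_j < N - \mathcal{N}(l_1)$. For any $d_S \in (0,d^*)$, the map $l\mapsto G(l,d_S)-N$ is positive near $l = 1/\mathcal{R}_0$ (since $\|\bm U^l\|_1 \to 0$ by \eqref{lem2-eq1}), negative at $l_1$ by the choice of $d^*$, and tends to $+\infty$ as $l\to\infty$ (since $\sum_j U^l_j \to N/d_I > 0$ by \eqref{lem2-eq1}). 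Two applications of the intermediate value theorem produce solutions on either side of $l_1$, and setting $l^{\min}(d_S) := \inf\{l > 1/\mathcal{R}_0 : G(l,d_S)=N\}$ and $l^{\max}(d_S) := \sup\{l > 1/\mathcal{R}_0 : G(l,d_S)=N\}$ (both attained by continuity and the boundary behavior above) yields the two announced EE.

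To establish {\rm (ii)}: $l^{\min} \in (1/\mathcal{R}_0, l_1]$ is bounded; if $l^{\min}\to 1/\mathcal{R}_0$ along a subsequence, then $G(l^{\min},d_S) \to N/\mathcal{R}_0 \ne N$, contradicting $G(l^{\min},d_S)=N$, so any cluster point $\bar l > 1/\mathcal{R}_0$ satisfies $\mathcal{N}(\bar l) = N$. Then $\bm I^{\min} = d_S l^{\min}\bm U^{l^{\min}} \to \bm 0$, while $\bm S^{\min} = \bm Z^{l^{\min}} \to \bm Z^{\bar l} =: \bm S^*$ with $\sum_j S^*_j = \mathcal{N}(\bar l) = N$ and $\bm S^* \gg \bm 0$ by \eqref{lem2-eq1}. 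Since $\bm U^{\bar l} \gg \bm 0$ and \eqref{Eq1} rewrites as $\big(d_I\mathcal{L} + {\rm diag}(\bm\beta\circ \bm Z^{\bar l} - \bm\gamma)\big)\bm U^{\bar l}=\bm 0$, the Perron-Frobenius theorem gives $\sigma_*\bigl(d_I\mathcal{L} + {\rm diag}(\bm\beta\circ(\bm S^*-\bm r))\bigr)=0$.

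For {\rm (i)}, the main obstacle is to show $l^{\max}\to\infty$ as $d_S \to 0$, because $\mathcal{N}$ is not assumed monotone and $l^{\max}$ cannot be identified directly with the largest root of $\mathcal{N}=N$. The argument is by contradiction: if $l^{\max}$ stays bounded along some sequence $d_S\to 0$, it converges to some $\hat l$ with $\mathcal{N}(\hat l)=N$; picking $\tilde l > \hat l$ with $\mathcal{N}(\tilde l) < N$ (possible since $\mathcal{N}(l)\to \sum_j r_j < N$), one obtains $G(\tilde l, d_S) < N$ for all sufficiently small $d_S$, and combined with $G(l,d_S)\to +\infty$ as $l\to\infty$, the IVT produces another zero beyond $\tilde l$, contradicting the supremum property of $l^{\max}$. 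Once $l^{\max}\to\infty$ is in hand, Lemma~\ref{lem2}-{\rm (i)} yields $\bm Z^{l^{\max}} \to \bm r$ and $d_I\bm U^{l^{\max}}\to N\bm\alpha$, hence $\bm S^{\max}\to\bm r$; substituting $\mathcal{N}(l^{\max})\to \sum_j r_j$ and $\sum_j U^{l^{\max}}_j \to N/d_I$ into $G(l^{\max}, d_S) = N$ forces $d_S l^{\max} \to (N - \sum_j r_j)\,d_I/N$, and therefore $\bm I^{\max} = d_S l^{\max}\bm U^{l^{\max}} \to (N - \sum_j r_j)\bm\alpha$.
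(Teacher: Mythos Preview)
Your approach is essentially identical to the paper's: reduce via Lemma~\ref{lem3} to the scalar equation $G(l,d_S)=N$ (the paper calls this function $\mathcal{F}$), use Lemma~\ref{lem2} for the boundary behavior at $l\to 1/\mathcal{R}_0$ and $l\to\infty$, pick a pivot $l_1$ (the paper's $l^*$) where $\mathcal{N}(l_1)<N$, and apply the intermediate value theorem on each side. Your contradiction argument for $l^{\max}\to\infty$ in {\rm (i)} is a valid alternative to the paper's direct lower bound $l^{\max}_{d_S}\ge (N-\sup_{l\ge l^*}\mathcal{N}(l))/(d_S\sum_j U^{l^*}_j)$, and your limit computations match the paper's.

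There is one small gap in {\rm (ii)}: you only show that every cluster point $\bar l$ of $l^{\min}(d_S)$ lies in $(1/\mathcal{R}_0,l_1]$ and satisfies $\mathcal{N}(\bar l)=N$, which yields subsequential convergence of $\bm S^{\min}$, not the full convergence asserted in the statement. The paper closes this by noting that $l^{\min}_{d_S}$ is monotone in $d_S$ --- since $d_S\mapsto G(l,d_S)$ is strictly increasing, the first root moves monotonically with $d_S$ --- so the limit $\lim_{d_S\to 0}l^{\min}_{d_S}$ exists. Adding this one-line observation fixes your argument.
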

\begin{proof}

Define $\mathcal{F}: (0, \infty)\times(1/\mathcal{R}_0, \infty)\to (0, \infty)$ by 
\begin{equation}\label{N-d_I-d_S-def}
\mathcal{F}(d_S, l)=\mathcal{N}(l)+d_{S}l\sum_{j\in\Omega} U_j^{l},
\end{equation}
where $\bm U^l$ and $\mathcal{N}(l)$ are specified as in  Lemma \ref{lem2}.  It is easy to see that $\mathcal{F}$ is strictly increasing in $d_S$ for each fixed $l>1/\mathcal{R}_0$. By Lemma \ref{lem2}, the function $\mathcal{F}$ is continuously differentiable with
\begin{equation}\label{RT200}
\lim_{l\to\infty}\mathcal{F}(d_S, l)=\infty, \quad \forall\ d_S>0.
\end{equation}
By \eqref{lem2-eq2}, we have
$$
\lim_{l\to\infty}\mathcal{N}(l)=\sum_{j\in\Omega} r_j<N.
$$
Hence, there is $l^*> {1}/{\mathcal{R}_0}$ (depending on $d_I$  but independent of $d_S$) such that 
\begin{equation}\label{RT33}
\sup_{l\ge l^*}\mathcal{N}(l)<N.
\end{equation}
Denote 
$$ 
d^*:=\frac{N-\mathcal{N}(l^*)}{l^*\sum_{j\in\Omega}U^{l^*}_j},
$$
where $d^*$ is  depending on $d_I$ but independent of $d_S$.
By the monotonicity of $\mathcal{F}$ in $d_S$,  for any $d_S\in (0, d^*)$, we have
\begin{equation}\label{RT30}
\mathcal{F}(d_S, l^*)<\mathcal{F}(d^*, l^*)=\mathcal{N}(l^*)+\frac{N-\mathcal{N}(l^*)}{l^*\sum_{j\in\Omega} U^{l^*}_j}l^*\sum_{j\in\Omega} U_j^{l^*}=N.
\end{equation}
 By   \eqref{RT200} and \eqref{RT30}, for any $d_S\in (0, d^*)$, there is $l_{d_S}^{\max}>l^*$ such that 
\begin{equation}\label{RT31}
    \mathcal{F}(d_S, l_{d_S}^{\max})=N\quad \text{and}\quad \mathcal{F}(d_S, l)>N,\quad \forall\ l>l_{d_S}^{\max}.
\end{equation}

Since $\mathcal{F}$ is strictly increasing in $d_S$, $l^{\max}_{d_S}$ is strictly decreasing in $d_S$. By the equation 
in \eqref{RT31}, the monotonicity of  $\bm U^l$  in $l$, and  $l_{d_S}^{\max}>l^*$, we have
\begin{equation}\label{RT32}
l^{\max}_{d_S}=\frac{N-\mathcal{N}(l^{\max}_{d_S})}{d_S\sum_{j\in\Omega} U^{l^{\max}_{d_S}}}\ge \frac{N-\sup_{l\ge l^*}\mathcal{N}(l)}{d_S\sum_{j\in\Omega} U^{l^*}}\to \infty \quad \text{as}\ d_{S}\to 0.
\end{equation}
For any $d_S\in (0, d^*)$, define
$$
 (\bm S^{\max},\;\bm I^{\max} ):=\big(l^{\max}_{d_S}(N\bm\alpha -d_I\bm U^{l^{\max}_{d_S}}),\; d_Sl^{\max}_{d_S}\bm U^{l^{\max}_{d_S}}\big).
 $$
It follows from Lemma \ref{lem3}-{\rm (ii)} that $(\bm S^{\max},\,\bm I^{\max} )$ is an EE solution for any $d_S\in (0, d^*)$. Moreover by  \eqref{lem2-eq1} and \eqref{RT32},  
$$
\bm S^{\max}=l^{\max}_{d_S}(N\bm\alpha -d_I\bm U^{l^{\max}_{d_S}})\to \bm r\quad \text{as}\quad d_{S}\to 0.
$$
By \eqref{lem2-eq1}, \eqref{lem2-eq2} and  the equation 
in \eqref{RT31}, we have
$$
\bm I^{\max}=d_Sl^{\max}_{d_S}\bm U^{l^{\max}_{d_S}}=\frac{N-\mathcal{N}(l^{\max}_{d_S})}{\sum_{j\in\Omega} U_j^{l^{\max}_{d_S}}}\bm U^{l^{\max}_{d_S}}\to (N-\sum_{j\in\Omega} r_j)\bm\alpha \quad \text{as}\;\; d_S\to 0.
$$

Let  $(\bm S,\bm I)$ be another EE solution of \eqref{model-mass-action}. By Lemma \ref{lem3}, $(\bm S, \bm I)=(\bar l(N\bm\alpha-d_I\bm U^{\bar l}), d_S\bar l\bm U^{\bar l})$ for some $\bar l>1/\mathcal{R}$ satisfying $\mathcal{F}(d_S, \bar l)=N$. By \eqref{RT31}, we have $l_{d_S}^{\max}>\bar l$. By the strict monotonicity of $\bm U^l$ in $l$,  we have $\bm I\ll\bm I^{\rm max}$.  This completes the proof of {\rm (i)}.

 Next, we prove the existence of $(\bm S^{\rm min},\bm I^{\rm min} )$ and (ii). By \eqref{lem2-eq1}, \eqref{lem2-eq2} and  \eqref{RT30}, and the fact that $\mathcal{R}_0<1$,
 $$ 
\mathcal{F}(d_S, l^*)<N<\frac{N}{\mathcal{R}_0}=\lim_{l\to \frac{1}{\mathcal{R}_0}}\mathcal{F}(d_S, l),\quad \forall \,d_S\in (0, d^*).
$$
Thus for any $d_S\in (0, d^*)$, there is $l^{\rm min}_{d_S}\in({1}/{\mathcal{R}_0},l^*)$ such that
\begin{equation}\label{RT36}
    \mathcal{F}(d_S, l^{\rm min}_{d_S})=N\quad \text{and} \quad \mathcal{F}(d_S, l)>N, \quad \forall\ l\in ({1}/{\mathcal{R}_0}, l^{\rm min}_{d_S}).
\end{equation}
For any  $d_S\in (0, d^*)$, define
$$
 (\bm S^{\rm min},\,\bm I^{\rm min } ):=\big(l^{\rm min}_{d_S}(N\bm\alpha -d_I\bm U^{l^{\rm min}_{d_S}}),\; d_Sl^{\rm min}_{d_S}\bm U^{l^{\rm min}_{d_S}}\big).
 $$
Then $(\bm S^{\rm min},\,\bm I^{\rm min } )$ is an EE solution for any  $d_S\in (0, d^*)$. By \eqref{RT36}, using a similar argument as for  $(\bm S^{\rm max},\,\bm I^{\rm max} )$, we can show that  $(\bm S^{\rm min},\,\bm I^{\rm min } )$ is the minimal solution in the sense that $\bm I\ll \bm I^{\rm min}$ for any other EE solution $(\bm S, \bm I)$.

Similar to $l^{\rm max}_{d_S}$,  $l^{\rm min}_{d_S}$ is strictly decreasing in $d_S$. It follows  that 
$$ 
\underline{l}:={
\sup_{0<d_S<d^*}}l^{\rm min}_{d_S}=\lim_{d_S\to0}l^{\rm min}_{d_S}.
$$
By the equation in \eqref{RT36}, we have 
$$
\lim_{d_S\to0}\mathcal{N}(l^{\rm min}_{d_S})=\lim_{d_S\to0}( N-d_Sl^{\rm min}_{d_S}\sum_{j\in\Omega} U_j^{l^{\rm min}_{d_S}})=N<\frac{N}{\mathcal{R}_0}.
$$
So by \eqref{lem2-eq2}, we must have $\underline{l}>{1}/{\mathcal{R}_0}$. It follows that 
$$
\lim_{d_S\to 0}\bm S^{\rm min}=\bm S^*:=\underline{l}(N\bm \alpha-d_I\bm U^{\underline{l}})\quad \text{and} \quad \lim_{d_S\to 0}\bm I^{\rm min}=\bm 0,
$$ 
where $\bm S^*$ satisfies  $\sum_{j\in\Omega} S^*_j=\mathcal{N}(\underline{l})=N$. 
Moreover, $\bm S^*$ and $\bm U^{\underline{l}}$ are positive and satisfies
$$
0=d_I\mathcal{L}\bm U^{\underline{l}}+\bm \beta\circ(\bm S^*-\bm r)\circ\bm U^{\underline{l}},
$$
which implies that $\sigma_*(d_I\mathcal{L}+{\rm diag}(\bm \beta\circ(\bm S^*-\bm r)))=0$. This proves (ii). 
\end{proof}

Combining Theorem \ref{T1-mass-action-DEF-stability}-{\rm (ii)}, Proposition \ref{T1-mass-action-EE-Non-Existence} and Theorem \ref{T2-mass-action-EE-Multiple},  we have the following result when using $d_S$ as a bifurcation parameter:

\begin{tm}\label{theorem-ds}
Suppose that {\bf (A1)}-{\bf (A3)} holds, $\sum_{j\in\Omega} r_j<N$, and there exists $d_I>0$ such that $\mathcal{R}_0<1$.   Then there exist  $0< d_1^*\le d_2^*\le d_3^*$, depending on the parameters of the model except for $d_S$, such that \eqref{model-mass-action} has no EE solution if $d_S>d_2^*$, the DFE is globally stable if $d_S>d_3^*$, and \eqref{model-mass-action} has at least two EE solutions if  $0<d_S<d_1^*$.
\end{tm}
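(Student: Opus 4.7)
The plan is to assemble $d_1^*$, $d_2^*$, and $d_3^*$ directly from the thresholds already produced in Theorem~\ref{T1-mass-action-DEF-stability}-(ii), Proposition~\ref{T1-mass-action-EE-Non-Existence}-(i), and Theorem~\ref{T2-mass-action-EE-Multiple}. Under the standing hypotheses of the current theorem we have $\mathcal{R}_0<1$ (hence $\mathcal{R}_0\le 1$) and $\sum_{j\in\Omega} r_j<N$, so each of the three cited results applies verbatim for the fixed value of $d_I$ under consideration.

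First, I would set $d_3^*:=\max\{d_I\mathcal{R}_0,\ d_{\rm low}(d_I,N)\}$, where $d_{\rm low}(d_I,N)$ is the constant furnished by Theorem~\ref{T1-mass-action-DEF-stability}-(ii). Whenever $d_S>d_3^*$, we have both $\mathcal{R}_0<1$ and $d_S>d_{\rm low}(d_I,N)$, so Theorem~\ref{T1-mass-action-DEF-stability}-(ii) gives the global asymptotic stability of the unique DFE. Next, I would set $d_2^*:=d_I\mathcal{R}_0$. By construction $d_2^*\le d_3^*$, and for any $d_S\ge d_2^*$ we have $d_S\ge d_I\mathcal{R}_0$, so Proposition~\ref{T1-mass-action-EE-Non-Existence}-(i) rules out the existence of any EE; in particular \eqref{model-mass-action} has no EE for $d_S>d_2^*$.

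Finally, take $d_1^*:=d^*$, with $d^*$ the constant produced by Theorem~\ref{T2-mass-action-EE-Multiple}; that theorem delivers at least two EE for every $d_S\in(0,d_1^*)$. The only point requiring a genuine (if short) argument is the ordering $d_1^*\le d_2^*$: if instead $d_1^*>d_2^*$, then any $d_S\in(d_2^*,d_1^*)$ would simultaneously admit at least two EE by Theorem~\ref{T2-mass-action-EE-Multiple} and no EE by Proposition~\ref{T1-mass-action-EE-Non-Existence}-(i), a contradiction. (Equivalently, one can simply replace $d_1^*$ by $\min\{d^*,d_2^*\}$, which still inherits the multiplicity statement from Theorem~\ref{T2-mass-action-EE-Multiple} since multiplicity on $(0,d^*)$ restricts to multiplicity on any smaller subinterval.)

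The proof is essentially bookkeeping: the three ingredient results do all the analytic work and the only thing left is to assemble the thresholds consistently. I do not expect any real obstacle; the mild subtlety is simply verifying the chain $d_1^*\le d_2^*\le d_3^*$, which as noted above follows either by contradiction using the disjointness of the existence and non-existence regimes, or by a direct minimization redefinition of $d_1^*$.
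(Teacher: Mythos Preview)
Your proposal is correct and follows exactly the approach the paper takes: the paper's proof consists solely of the sentence ``Combining Theorem~\ref{T1-mass-action-DEF-stability}-(ii), Proposition~\ref{T1-mass-action-EE-Non-Existence} and Theorem~\ref{T2-mass-action-EE-Multiple},'' and you have supplied the bookkeeping that makes this assembly explicit, including the verification of the ordering $d_1^*\le d_2^*\le d_3^*$.
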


\begin{rk}
In Theorem \ref{theorem-ds}, it remains open   whether $d_1^*= d_2^*= d_3$. If $d_2^*<d_3^*$, it would be interesting to investigate  the large time behavior of the solutions when $d_S\in (d_2^*, d_3^*)$.  If $d_1^*<d_2^*$, it would be interesting to  determine the number of EE solutions when $d_S\in (d_1^*, d_2^*)$.   
\end{rk}  

\begin{rk}\label{R-1}
    Note that $\mathcal{R}_0$  is decreasing in $d_I$ and $\mathcal{R}_0\to \sum_{j\in\Omega} N\alpha_j^2\beta_j/\sum_{j\in\Omega}\alpha_j\gamma_j$ as $d_I\to\infty$. Hence if $\sum_{j\in\Omega} N\alpha_j^2\beta_j/\sum_{j\in\Omega}\alpha_j\gamma_j<1$, then there exists $d_I^*>0$ such that $\mathcal{R}_0<1$ for any $d_I>d_I^*$. Therefore, we can choose $N>0$ satisfying the conditions in Theorem \ref{theorem-ds}  if and only if $\sum_{j\in\Omega}\gamma_j/\beta_j<(\sum_{j\in\Omega}\alpha_j\gamma_j)/(\sum_{j\in\Omega}\beta_i\alpha_j^2)$. There are parameters $\bm \beta$ and $\bm\gamma$ satisfying the latter inequality. For example, if $ \gamma_j=(\beta_j\alpha_j)^2$ for all $j\in\Omega$ and $\bm \gamma\notin{\rm span}(\bm 1)$, then using  $\sum_{j\in\Omega}\alpha_j=1$ and the strict convexity of the function $t\mapsto t^2$, we obtain that 
    $$
\Big(\sum_{j\in\Omega}\frac{\gamma_j}{\beta_j}\Big)\Big(\sum_{j\in\Omega}\beta_j\alpha_j^2\Big)=\Big(\sum_{j\in\Omega}\beta_j\alpha_j^2\Big)^2=\Big(\sum_{j\in\Omega}\alpha_j(\beta_j\alpha_j)\Big)^2=\Big(\sum_{j\in\Omega}\alpha_j\sqrt{\gamma_j}\Big)^2<\sum_{j\in\Omega}\alpha_j\gamma_j.
    $$
\end{rk}

\subsection{Case $\mathcal{R}_0>1$}
Now we assume that $\mathcal{R}_0>1$ and find a parameter range such that the EE solution is unique. 

\begin{tm}[Existence and uniqueness of EE]\label{T2-mass-action-EE-Existence} Suppose that {\bf (A1)}-{\bf (A3)} holds and $d_I$ is fixed. 
\begin{itemize}
    \item[\rm (i)] If $\mathcal{R}_0>1$, then \eqref{model-mass-action} has at least one EE solution for every $d_S>0$. 
    \item[\rm (ii)] There is  $m_{d_I}\in(0,1]$, independent of $N$   and depending on $d_I$, such that if $d_S>(1-m_{d_I})d_I$, then \eqref{model-mass-action} has a unique EE solution if $\mathcal{R}_0>1$ and no EE solution if $\mathcal{R}_0\le 1$. Moreover, $m_{d_I}=1$ if  $\bm\gamma=m\bm\beta\circ\bm\alpha$ for some $m>0$.
\end{itemize}  
\end{tm}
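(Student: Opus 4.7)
The plan is to use Lemma \ref{lem3}: by part (i) there, every EE arises (via rescaling) from some $l > 1/\mathcal{R}_0$, and by part (ii) every such $l$ satisfying $\mathcal{F}(d_S, l) := \mathcal{N}(l) + d_S l \sum_{j\in\Omega} U^l_j = N$ produces exactly one EE, so the EE solutions are in bijection with the roots of $\mathcal{F}(d_S, \cdot) = N$ on $(1/\mathcal{R}_0, \infty)$. Writing $\mathcal{N}(l) = lN - l d_I \sum_j U^l_j$, this becomes
$$
\mathcal{F}(d_S, l) = lN - (d_I - d_S)\,l\sum_{j\in\Omega} U^l_j.
$$
For part (i), I would apply the intermediate value theorem: Lemma \ref{lem2} gives $\mathcal{F}(d_S, l)\to N/\mathcal{R}_0$ as $l\to (1/\mathcal{R}_0)^+$ (using $\|\bm U^l\|_1\to 0$ and $\mathcal{N}(l)\to N/\mathcal{R}_0$) and $\mathcal{F}(d_S, l)\to \infty$ as $l\to \infty$ (since $l\sum_j U^l_j \sim lN/d_I$). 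When $\mathcal{R}_0 > 1$, $N/\mathcal{R}_0 < N$, so continuity provides $l^* > 1/\mathcal{R}_0$ with $\mathcal{F}(d_S, l^*) = N$, giving an EE.

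For part (ii), I would show strict monotonicity of $l \mapsto \mathcal{F}(d_S, l)$ under an $N$-independent condition on $d_S$. To eliminate $N$, I rescale: set $\mu := lN$ and $\bm V^\mu := (d_I/N)\bm U^l$, so that \eqref{Eq1} transforms into the $N$-free system
$$
d_I\mathcal{L}\bm V + \bigl(\mu\bm\beta\circ(\bm\alpha - \bm V) - \bm\gamma\bigr)\circ \bm V = \bm 0, \qquad \mu > \mu_c := 1/\rho({\rm diag}(\bm\alpha\circ\bm\beta)V^{-1}),
$$
where $\mu_c = N/\mathcal{R}_0$ is independent of $N$. Defining $\phi(\mu) := \sum_j V^\mu_j$ and $h(\mu) := \mu\phi(\mu)$, one checks $l\sum_j U^l_j = h(\mu)/d_I$, whence
$$
\partial_l \mathcal{F}(d_S, l) = N - \frac{(d_I - d_S)N}{d_I}\,h'(\mu).
$$
Positivity is automatic when $d_S \ge d_I$, and when $d_S < d_I$ reduces to $h'(\mu) < d_I/(d_I - d_S)$. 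The strict monotonicity of $\bm V^\mu$ in $\mu$ (by the analogue of Lemma \ref{lem2}(ii) applied to the $\bm V$-system) forces $h' > 0$, so defining $H_{d_I} := \sup_{\mu > \mu_c} h'(\mu)$ and $m_{d_I} := \min\{1, 1/H_{d_I}\}$ would deliver the threshold claimed.

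The main technical obstacle is showing $H_{d_I} < \infty$. I would combine continuity of $h'$ on $(\mu_c, \infty)$ with two boundary estimates. As $\mu\to\infty$, Lemma \ref{lem2} gives $\bm V^\mu \to \bm\alpha$ with $\bm\alpha - \bm V^\mu \sim \bm r/\mu$; differentiating the $\bm V$-equation, the linearization $d_I\mathcal{L} + {\rm diag}(\mu\bm\beta\circ(\bm\alpha - 2\bm V^\mu) - \bm\gamma)$ has spectral bound tending to $-\infty$, which yields $\mu\partial_\mu \bm V^\mu \to \bm 0$ and hence $h'(\mu) \to 1$. As $\mu \to \mu_c^+$, a local transcritical bifurcation analysis of $\bm V^\mu$ from the trivial branch along the positive principal eigenvector $\bm\xi$ of $d_I\mathcal{L} + {\rm diag}(\mu_c\bm\alpha\circ\bm\beta - \bm\gamma)$ yields $\bm V^\mu = c(\mu - \mu_c)\bm\xi + O((\mu-\mu_c)^2)$ with $c > 0$, so $\partial_\mu\bm V^\mu$ has a finite limit and so does $h'(\mu)$. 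Continuity then gives $H_{d_I} \in [1, \infty)$ and $m_{d_I} \in (0, 1]$. In the special case $\bm\gamma = m\bm\beta\circ\bm\alpha$, direct substitution verifies $\bm V^\mu = (1 - m/\mu)\bm\alpha$ for $\mu > m$, so $h(\mu) = \mu - m$, $h' \equiv 1$, and $m_{d_I} = 1$. Finally, when $\mathcal{R}_0 \le 1$ the monotonicity of $\mathcal{F}(d_S, \cdot)$ combined with the boundary value $N/\mathcal{R}_0 \ge N$ shows $\mathcal{F}(d_S, l) > N$ throughout $(1/\mathcal{R}_0, \infty)$, so no EE exists.
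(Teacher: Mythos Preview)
Your proposal is correct and follows the same overall strategy as the paper: reduce existence/uniqueness of EE to the equation $\mathcal{F}(d_S,\cdot)=N$ via Lemma~\ref{lem3}, prove existence by the intermediate value theorem exactly as the paper does, and prove uniqueness by showing $\partial_l\mathcal{F}>0$ through control of the quantity $\frac{d}{dl}\big(l\sum_j U^l_j\big)$ at the two endpoints $l\to 1/\mathcal{R}_0$ and $l\to\infty$.

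The execution differs in two places. First, your rescaling $\mu=lN$, $\bm V^\mu=(d_I/N)\bm U^l$ renders the auxiliary problem $N$-free from the outset, so that $H_{d_I}=\sup_{\mu>\mu_c}h'(\mu)$ and $m_{d_I}=\min\{1,1/H_{d_I}\}$ are manifestly independent of $N$; the paper instead works with the $N$-dependent quantity $\mathcal{K}_{d_I}(l)=l\sum_j\frac{dU^l_j}{dl}+\sum_jU^l_j$ and only at the end shows $\sup_l\mathcal{K}_{d_I}(l)\le N(1+|\Omega|c_{d_I}/d_I)$, whence $m_{d_I}:=\inf_{N>0}N/(d_I\mathcal{M}_{d_I}(N))\ge 1/(d_I+|\Omega|c_{d_I})$. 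Your route is tidier here. Second, at the lower endpoint the paper bounds $l\frac{d\bm U^l}{dl}$ by a supersolution argument and then invokes the Harnack-type Lemma~\ref{Harnck-lemma} (with an $N$-independent constant $c_{d_I}$ coming from the uniform bound on $\bm Z^l$) to control $\|\bm U^l\|_\infty/\bm U^l_m$. You instead appeal to the transcritical bifurcation of $\bm V^\mu$ from the trivial branch at $\mu_c$ along the principal eigenvector, which gives a finite $\lim_{\mu\to\mu_c^+}h'(\mu)$ directly. Both are valid; the paper's estimate is more explicit (it actually produces a lower bound for $m_{d_I}$), while yours is shorter but requires invoking Crandall--Rabinowitz or an equivalent local analysis. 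At the upper endpoint the two arguments are essentially the same: the paper's supersolution bound $\bm V^l\le\frac{\|N\bm\alpha-d_I\bm U^l\|_\infty}{d_I\bm U^l_m}\bm U^l$ encodes exactly the linearization estimate you sketch, and both give $h'(\mu)\to 1$ (equivalently $\mathcal{K}_{d_I}(l)\to N/d_I$). One small remark: your phrase ``spectral bound tending to $-\infty$'' is not quite the operative fact; what you need is that $\mu A_\mu^{-1}$ stays bounded, which follows from $A_\mu/\mu\to -{\rm diag}(\bm\beta\circ\bm\alpha)$.
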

\begin{proof} 
(i) Let $\mathcal{F}$ be defined as in the proof of Theorem \ref{T2-mass-action-EE-Multiple}. For any $d_S>0$, it follows from \eqref{lem2-eq1} and \eqref{lem2-eq2} that 
$$
\lim_{l\to\frac{1}{\mathcal{R}_0}}\mathcal{F}(d_S, l)= \frac{N}{\mathcal{R}_0}<N.
$$
Hence by \eqref{RT200}, for any $d_S>0$, there is $l^{d_S}>{1}/{\mathcal{R}_0}$ satisfying $\mathcal{F}(d_S, l^{d_S})=N$. So \eqref{N-equation} holds for $l=l^{d_S}$. Therefore by lemma \ref{lem3}-{\rm (ii)}, $(\bm S,\bm I)=(l^{d_S}(N\bm\alpha -\bm U^{l^{d_S}}),d_Sl^{d_S}\bm U^{l^{d_S}})$ is an EE solution of \eqref{model-mass-action}. 

 (ii) We  show that there is $0<m_{d_I}\le 1$, independent of  $N$, such that 
\begin{equation}\label{PLL2}
    \frac{\partial\mathcal{F}(d_S,l)}{\partial l}>0, \quad \forall \ l>\frac{1}{\mathcal{R}_0}, \ d_S>(1-m_{d_I})d_I,\ N>0. 
\end{equation}
Note that if \eqref{PLL2} holds, then  for any $N>0$ and $d_S>(1-m_{d_I})d_I$, the algebraic equation $\mathcal{F}(d_S,l)=N$ in $l$ has at most one solution in $(1/\mathcal{R}_0,\infty)$. Furthermore, this algebraic equation has a solution if and only if $\mathcal{R}_0>1$. So by Lemma \ref{lem3}, if we can prove \eqref{PLL2} then the existence and uniqueness results for EE solutions in (ii) hold.

To prove \eqref{PLL2}, we  compute that 
\begin{align}\label{RT21}
\frac{\partial\mathcal{F}(d_S, l)}{\partial l}=&\sum_{j\in\Omega}N\alpha_j+(d_S-d_I)l\sum_{j\in\Omega}\frac{d U_j^l}{dl}+(d_S-d_I)\sum_{j\in\Omega} U_j^l\cr 
=&N+(d_S-d_I)\Big(l\sum_{j\in\Omega}\frac{d U_j^l}{dl}+\sum_{j\in\Omega} U^l_j\Big), \quad \quad\forall\ l>\frac{1}{\mathcal{R}_0},\ d_S>0, \ N>0.
\end{align}
For each $N>0$, define
$$
\mathcal{M}_{d_I}{(N)}:=\sup_{l>\frac{1}{\mathcal{R}_0}}\Big(l\sum_{j\in\Omega}\frac{d U_j^l}{dl}+\sum_{j\in\Omega} U^l_j\Big).
$$
Since $\bm U^l$ is strictly increasing in $l$, we have $\mathcal{M}_{d_I}{(N)}\in (0,\infty]$. If $ \mathcal{M}_{d_I}{(N)}<\infty$,  by \eqref{RT21}, we see that 
$$
\frac{\partial\mathcal{F}(d_S, l)}{\partial l}>0, \quad \quad \forall\,d_S>\Big(1-\frac{N}{d_I\mathcal{M}_{d_I}{(N)}}\Big)_+d_{I}.
$$
Hence, the EE solution is unique if $d_S>\Big(1-\frac{N}{d_I\mathcal{M}_{d_I}{(N)}}\Big)_+d_{I}.$ 

Next, we show that $\mathcal{M}_{d_I}{(N)}$ is finite.  Since $l\mapsto \mathcal{K}_{d_I}(l):=l\sum_{j\in\Omega}\frac{d\ U_j^l}{dl}+\sum_{j\in\Omega} U^l_j$ is continuous, it suffices to show that 
\begin{equation}\label{RT22}
   \limsup_{l\to\infty}\mathcal{K}_{d_I}(l)<\infty
\end{equation}
and 
\begin{equation}\label{RT23}
   \limsup_{l\to\frac{1}{\mathcal{R}_0}}\mathcal{K}_{d_I}(l)<\infty.
\end{equation}

We first show  \eqref{RT22}. Differentiate \eqref{Eq1} with respect to $l$, we obtain 
\begin{equation}\label{RT25}
    0=d_I\mathcal{L}\frac{d \bm U^l}{dl}+(l\bm\beta\circ(N\bm\alpha -d_I\bm U^l)-\bm\gamma)\circ \frac{d\bm U^l}{dl} + \bm\beta\circ\Big( (N\bm\alpha-d_I\bm U^l)-l d_I\frac{d\bm U^l}{dl}\Big)\circ \bm U^l,\quad l>\frac{1}{\mathcal{R}_0}.
\end{equation}
Let $\bm V^l=l\frac{d\bm U^l}{dl}$. Then $\bm V^l$  is the unique positive solution of 
\begin{equation}\label{RT26}
    \bm 0=d_I\mathcal{L}\bm V^l+(l\bm\beta\circ(N\bm\alpha-d_I\bm U^l)-\bm\gamma)\circ\bm V^l+l\bm\beta\circ((N\bm\alpha-d_I\bm U^l)-d_I\bm V^l)\circ\bm U^l,\quad l>\frac{1}{\mathcal{R}_0}.
\end{equation}
Using the fact that $\bm U^l$ solves \eqref{Eq1}, we can check that $ \frac{\|N\bm\alpha -d_I\bm U^l\|_{\infty}}{d_I \bm U^l_m}\bm U^l$ is a super solution of \eqref{RT26}. Hence,
 $$
 \bm V^l\le \frac{\|N\bm\alpha -d_I\bm U^l\|_{\infty}}{d_I \bm U^l_m}\bm U^l,  \quad\forall\, l>\frac{1}{\mathcal{R}_0}.
 $$
Since $d_I\bm U^l\to N\bm\alpha$ as $l\to\infty$ by \eqref{lem2-eq1}, we conclude that $\|\bm V^l\|_1\to 0$ as $l\to \infty$.  This implies that 
\begin{equation}\label{RT27}
\lim_{l\to\infty}\mathcal{K}_{d_I}(l)=\lim_{l\to\infty}\sum_{j\in\Omega}( V^l_j+ U^l_j)=\frac{N}{d_I},
\end{equation}
which yields  \eqref{RT22}. 

Let $\bm Z^l=l(N\bm\alpha-d_I\bm U^l)$ is as in \eqref{Z-l=eq}.  Then by \eqref{Z-l-bounds},  we have
$$
\|l\bm\beta\circ(N\bm\alpha-d_I\bm U^l)-\bm\gamma\|_1=\|\bm\beta\circ(\bm Z^l-\bm r)\|_1\le \|\bm\beta\|_{\infty}\|\bm Z^l-\bm r\|_1 \le \|\bm \beta\|_\infty \left(\frac{\bm r_M}{\bm\alpha_m}+\|\bm r\|_1 \right),\quad \forall\ l>\frac{1}{\mathcal{R}_0}.
$$
It follows from Lemma \ref{Harnack-eq1} that there is $c_{d_I}>0$, {independent of $N$,} such that 
$$
\|\bm U^l\|_{\infty}\le c_{d_I}\bm U_m^l,\quad \forall\ l>\frac{1}{\mathcal{R}_0}.
$$
Therefore, 
\begin{equation}\label{PLL1}
\sum_{j\in\Omega} V^l_j\le \frac{\|N\bm\alpha-d_I\bm U^l\|_1}{d_I U^l_m}\sum_{j\in\Omega} U^l_j\le \frac{|\Omega|c_{d_I}}{d_I}\|N\bm\alpha-d_I\bm U^l\|_1, \quad \forall\ l>\frac{1}{\mathcal{R}_0}.
\end{equation}
As a result, 
$$
\limsup_{l\to\frac{1}{\mathcal{R}_0}}\mathcal{K}_{d_I}(l)\le \frac{c_{d_I}|\Omega| }{d_I}\lim_{l\to\frac{1}{\mathcal{R}_0}}\|N\bm\alpha-d_I\bm U^l\|_1=\frac{c_{d_I}N|\Omega|}{d_I}.
$$
This proves \eqref{RT23}. Therefore, $\mathcal{M}_{d_I}{(N)}<\infty$.  Moreover, by \eqref{RT27}, $d_I\mathcal{M}_{d_I}{(N)}\ge N$, which is equivalent to $0<\tilde m_{d_I}(N):=\frac{N}{d_I\mathcal{M}_{d_I}{(N)}}\le 1$. {Moreover, we have from \eqref{PLL1} and the fact that $\bm U^l\le \frac{N}{d_I}\bm \alpha$ (see \eqref{lem2-eq1}) that 
$$
\mathcal{K}_{d_I}(l)=\sum_{j\in\Omega}(U_j^l+V_j^l)\le \sum_{j\in\Omega}N\alpha_j+\frac{|\Omega|c_{d_I}}{d_I}\sum_{j\in\Omega}N\alpha_j=N\Big(1+\frac{|\Omega|c_{d_I}}{d_I}\Big).
$$
Therefore, $$\frac{d_I\mathcal{M}_{d_I}(N)}{N}=\frac{d_I}{N}\sup_{l>\frac{1}{\mathcal{R}_0}}\mathcal{K}_{d_I}(l)\le d_I+|\Omega|c_{d_I}. $$ Hence,   setting  $m_{d_I}:=\inf_{N>0}\tilde m_{d_I}(N)$, 
we have that $0<\frac{1}{d_I+|\Omega|c_{d_I}}\le m_{d_I}\le 1$, $m_{d_I}$ is independent of $N$, and $\partial_l\mathcal{F}(d_S,l)>0$ for any $d_S>(1-m_{d_I})d_I$, $l>1/\mathcal{R}_0$ and $N>0$. 
} 
This completes the assertion about the uniqueness of EE when $d_S>(1-m_{d_I})d_I$. 

Finally, suppose that $\bm\gamma=m\bm\beta\circ\bm\alpha$ for some positive number $m$.  In this case, it is easy to check that the unique positive solution of \eqref{Eq1} is $\bm U^l=\frac{1}{d_I}(N-\frac{m}{l})\bm \alpha$ for all $l>{1}/{\mathcal{R}_0}$. Hence
$$
\mathcal{M}_{d_l}(l)=\sum_{j\in\Omega}\Big(\frac{m}{ld_I}+\frac{1}{d_I}\Big(N-\frac{m}{l}\Big)\Big)\alpha_j=\frac{N}{d_I},\quad \forall\ l>\frac{1}{\mathcal{R}_0},
$$
which gives $m_{d_I}=1$. 
\end{proof}

We conjecture that if the EE solution is unique it is globally stable. However, we are only able to prove this for some special parameter ranges:

\begin{tm}[Stability of EE when $\mathcal{R}_0>1$]\label{T1-mass-action-EE-stability} Suppose that ${\bf (A1)}$-${\bf (A3)}$ holds and $\mathcal{R}_0>1$.  Then the $\bm I$-component of the solution is uniformly persistent in the sense that there is $\eta_*>0$ independent of initial data such that 
\begin{equation}\label{persistence-eq}
    \liminf_{t\to\infty}\min_{j\in\Omega} I_{j}(t)\ge \eta_*
\end{equation}
for any solution $(\bm S(t),\bm I(t))$ of \eqref{model-mass-action} with $\bm I^0\neq \bm 0$. 
Moreover, the following conclusions hold:
\begin{itemize}
 \item [\rm (i)] If $\bm\gamma=m\bm\beta\circ\bm \alpha$ for some positive number $m>0$, then any solution of \eqref{model-mass-action} with $\bm I^0\neq \bm 0$ converges to $ (\bm r, (N-\sum_{j\in\Omega} r_j)\bm \alpha)$ as $t\to\infty$;
 
\item[\rm (ii)] If $d_S=d_I$, then any solution of \eqref{model-mass-action} with $\bm I^0\neq \bm 0$ converges to $({N}\bm\alpha-{\bm I_0},{\bm I_0})$ as $t\to\infty$, where ${\bm I_0}$ is the unique positive solution of 
    \begin{equation}\label{I-tilde-equation}
    \bm 0=d_I\mathcal{L}\tilde{\bm I}+\Big(\bm\beta\circ(N\bm \alpha-\tilde{\bm I})-\bm\gamma\Big)\circ\tilde{\bm I}.
    \end{equation}
   
\end{itemize}
\end{tm}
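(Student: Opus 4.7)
My plan is to apply a standard uniform persistence theorem (e.g., the acyclicity framework of Thieme) to the semiflow on the compact invariant set $\mathcal{E}$. The only nontrivial hypothesis to verify is that the DFE ${\bf E}^0=(N\bm\alpha,\bm 0)$ is uniformly weakly repelling in $\{(\bm S,\bm I)\in\mathcal{E}:\bm I\neq\bm 0\}$. Suppose for contradiction that some nontrivial orbit satisfies $(\bm S(t),\bm I(t))\to{\bf E}^0$; then $\bm F(t):=-\bm\beta\circ\bm I\circ\bm S+\bm\gamma\circ\bm I$ tends to $\bm 0$, so Lemma \ref{lem0} applied to the $\bm S$-equation together with the mass identity $\sum_j(S_j+I_j)=N$ yields $\bm S(t)\to N\bm\alpha$. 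For any $\delta>0$ and $t$ sufficiently large, $\bm I$ therefore satisfies $\bm I'\geq d_I\mathcal{L}\bm I+((N-\delta)\bm\beta\circ\bm\alpha-\bm\gamma)\circ\bm I$. Since $\mathcal{R}_0>1$ gives $\sigma_*(d_I\mathcal{L}+{\rm diag}(N\bm\beta\circ\bm\alpha-\bm\gamma))>0$, the perturbed matrix retains positive spectral bound for small $\delta$, and Perron--Frobenius-based comparison forces exponential growth of $\bm I$, contradicting $\bm I\to\bm 0$. The persistence theorem then yields $\liminf_{t\to\infty}\|\bm I(t)\|_\infty\geq\tilde\eta>0$ uniformly, and Lemma \ref{Harnck-lemma} applied with $\bm M(t)=\bm\beta\circ\bm S(t)-\bm\gamma$ converts this into \eqref{persistence-eq}. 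The fact that $\sum_jS_j$ is not conserved (in contrast to the $d_S=d_I$ case) is what makes Lemma \ref{lem0} essential here.

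\textbf{Part (i).} Since $\bm\gamma=m\bm\beta\circ\bm\alpha$ forces $\bm r=m\bm\alpha$, set $\bm W:=\bm S-m\bm\alpha$. Using $\mathcal{L}\bm\alpha=\bm 0$ and the cancellation of the two mass-action terms gives the clean equation $\bm W'=d_S\mathcal{L}\bm W-(\bm\beta\circ\bm I)\circ\bm W$. The key step is to pass to the rescaled variable $u_i:=W_i/\alpha_i$: the identity $L_{ii}\alpha_i=-\sum_{j\neq i}L_{ij}\alpha_j$ converts the drift into a disguised discrete Laplacian,
$$u_i'=\frac{d_S}{\alpha_i}\sum_{j\neq i}L_{ij}\alpha_j(u_j-u_i)-\beta_iI_iu_i,$$
at which point the maximum principle becomes transparent even without symmetry of $\mathcal{L}$. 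At any index $i^\ast$ attaining $\max_iu_i$ the diffusion sum is nonpositive, so a standard Dini-derivative argument gives $D^+(\max_iu_i)_+\leq-\bm\beta_m I_m(t)(\max_iu_i)_+$ and, symmetrically, $D^+(-\min_iu_i)_+\leq-\bm\beta_m I_m(t)(-\min_iu_i)_+$. Combined with the persistence bound $I_m(t)\geq\eta_*$ for $t$ large, both quantities decay exponentially, so $\bm W/\bm\alpha\to\bm 0$ and hence $\bm S(t)\to m\bm\alpha=\bm r$. For $\bm I$, rewrite the equation as $\bm I'=d_I\mathcal{L}\bm I+\bm F(t)$ with $\bm F(t):=\bm\beta\circ(\bm S-m\bm\alpha)\circ\bm I\to\bm 0$; Lemma \ref{lem0} then gives $\bm I(t)-(\sum_jI_j(t))\bm\alpha\to\bm 0$, and the mass identity forces $\sum_jI_j(t)\to N-m=N-\sum_jr_j$. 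The principal novelty is the $\bm\alpha$-rescaling: without symmetry of $\mathcal{L}$, $L^2$-energy methods are not directly available, and this rescaling is precisely what turns $d_S\mathcal{L}$ into the generator of a zero-row-sum weighted graph Laplacian on which the maximum principle applies.

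\textbf{Part (ii).} When $d_S=d_I=d$, $\bm Z:=\bm S+\bm I$ satisfies $\bm Z'=d\mathcal{L}\bm Z$, and Lemma \ref{lem0} gives $\bm Z(t)\to N\bm\alpha$. Substituting $\bm S=\bm Z-\bm I$ into the $\bm I$-equation yields the asymptotically autonomous system
$$\bm I'=d\mathcal{L}\bm I+(\bm\beta\circ(N\bm\alpha-\bm I)-\bm\gamma)\circ\bm I+\bm\beta\circ(\bm Z-N\bm\alpha)\circ\bm I,$$
whose limit equation is precisely \eqref{I-tilde-equation}, a logistic-type system which (by the argument of Lemma \ref{lem2}(i)) admits the unique positive equilibrium $\bm I_0$ iff $\mathcal{R}_0>1$. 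To conclude $\bm I(t)\to\bm I_0$ I plan to sandwich $\bm I$ via comparison: eventually $(N-\epsilon)\bm\alpha\leq\bm Z\leq(N+\epsilon)\bm\alpha$ for any $\epsilon>0$, so $\bm I$ becomes a subsolution of the $(N+\epsilon)$-perturbed logistic system and a supersolution of the $(N-\epsilon)$-perturbed one. Cooperativity of these perturbed systems then yields $\limsup_{t\to\infty}\bm I(t)\leq\bm I_0^+(\epsilon)$ and $\liminf_{t\to\infty}\bm I(t)\geq\bm I_0^-(\epsilon)$, where $\bm I_0^\pm(\epsilon)$ is the unique positive equilibrium of the $(N\pm\epsilon)$-perturbed logistic system. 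Continuous dependence on $\epsilon$ (via the implicit function argument in Lemma \ref{lem2}(ii)) gives $\bm I_0^\pm(\epsilon)\to\bm I_0$ as $\epsilon\to 0$, and uniform persistence keeps $\bm I$ bounded away from $\bm 0$ so the comparison is valid; letting $\epsilon\to 0$ yields $\bm I(t)\to\bm I_0$, hence $\bm S(t)\to N\bm\alpha-\bm I_0$. Once persistence is established, this step is routine.
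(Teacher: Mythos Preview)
Your proposal is correct. The uniform persistence argument and Part (ii) are essentially identical to the paper's treatment: the paper simply cites the standard persistence framework for the former, and for the latter carries out exactly the $\bm Z=\bm S+\bm I\to N\bm\alpha$ reduction followed by the two-sided $(1\pm\varepsilon)N\bm\alpha$ sandwich and comparison with the perturbed logistic equilibria that you describe.

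Part (i) is where you take a genuinely different route. The paper keeps the equation for $\bm S$ in the form $\bm S'=d_S\mathcal{L}\bm S+\bm\beta\circ(m\bm\alpha-\bm S)\circ\bm I$ and constructs explicit sub- and supersolutions lying in ${\rm span}(\bm\alpha)$, namely $m\bm\alpha\pm c\,e^{-\bm\beta_m\int_1^t\bm I_m(s)\,ds}\bm\alpha$, then invokes the comparison principle for cooperative systems together with $\int_1^\infty\bm I_m=\infty$ from persistence. Your approach instead passes to $\bm W=\bm S-m\bm\alpha$ and rescales $u_i=W_i/\alpha_i$, using the identity $L_{ii}\alpha_i=-\sum_{j\ne i}L_{ij}\alpha_j$ to recognize $d_S\mathcal{L}$ as a weighted graph Laplacian in the $u$-variables, after which a direct Dini-derivative maximum principle gives exponential decay of $(\max_i u_i)_+$ and $(-\min_i u_i)_+$. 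Both arguments ultimately feed the same ingredient ($\bm I_m(t)\ge\eta_*$ eventually) into the same damping mechanism, and both finish via Lemma \ref{lem0} for the $\bm I$-limit. The paper's version has the advantage of yielding an explicit envelope for $\bm S$ without differentiating a maximum; your rescaling is more intrinsic and makes the role of $\bm\alpha$ in compensating for the asymmetry of $\mathcal{L}$ completely transparent, which is a nice structural observation.
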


\begin{proof}
The uniform persistence of solutions  when $\mathcal{R}_0>1$ is standard  (e.g., see the proof of \cite[Theorem 2.3]{wang2004epidemic}), so we omit the proof here.

 Let $(\bm S(t),\bm I(t))$ be a solution of \eqref{model-mass-action} with  $\bm I^0\neq \bm 0$.

\noindent{\rm (i)} Suppose that $\bm \gamma=m\bm\beta\circ\bm \alpha$ for some $m>0$.  By \eqref{Sl0} we have that 
\begin{equation}\label{RT42}
m\bm\alpha -me^{-\bm\beta_m\int_{1}^t \bm I_m(s)ds}\bm \alpha\le \bm S(t),\quad t\ge 1.
\end{equation}
    On the other hand, define 
    $$
    {\bm S}^{\rm up}(t)=m\bm\alpha+\frac{\|\bm S(1)\|_{\infty}}{\bm\alpha_m}e^{- \bm\beta_{m}\int_{1}^t \bm I_m(s)ds}\bm \alpha, \quad \forall\, t>1.
    $$
 Hence, 
    \begin{align*}
         \frac{d {\bm S}^{\rm up}}{dt}-d\mathcal{L}{\bm S}^{\rm up}-\bm \beta\circ(m\bm\alpha-{\bm S}^{\rm up})\circ \bm I =&\frac{\|\bm S(1)\|_{\infty}}{\bm\alpha_m}e^{-\bm\beta_m\int_{1}^t \bm I_m(s)ds}\Big(-\bm\beta_m \bm I_m(t){\bm 1} +\bm\beta\circ\bm I \Big)\circ\bm \alpha\ge {\bm 0}. 
    \end{align*}
 Noticing $\bm S^{\rm up}(1)\ge \bm S(1)$, by the comparison principle for monotone dynamical systems, we have
    \begin{equation}\label{RT43}
        \bm S(t)\le  {\bm S}^{\rm up}(t)=m\bm\alpha+\frac{\|\bm S(1)\|_{\infty}}{\bm\alpha_m}e^{- \bm\beta_{m}\int_{1}^t \bm I_m(s)ds}\bm \alpha, \quad \forall\, t>1.
    \end{equation}
By \eqref{persistence-eq}, we have $ \int_{1}^{\infty} \bm I_m(t)dt=\infty$. As a result, we conclude from \eqref{RT42}-\eqref{RT43} that 
    \begin{equation}\label{RT44}
    \bm S(t)\to m\bm \alpha=\bm r\quad \text{as}\quad  t\to\infty.
    \end{equation}
    This in turn implies that 
    $$
    \|(\bm\beta\circ\bm S(t)-\bm\gamma)\circ \bm I(t)\|_{\infty}\to 0 \quad \text{as}\quad t\to\infty.
    $$
    Therefore, by Lemma \ref{lem0}, 
    \begin{equation}\label{RT45}
        \big\|\bm I(t)-\bm\alpha{\sum_{j\in\Omega} I_j(t)}\big\|_1\to 0\quad \text{as}\quad t\to\infty.
    \end{equation}
   By \eqref{RT44}, we have 
    $$
   {\sum_{j\in\Omega} I_j(t)}={N}-{\sum_{j\in\Omega} S_j(t)}\to {N-\sum_{j\in\Omega} r_j}\quad \text{as}\quad t\to\infty.
    $$
    It then follows from \eqref{RT44}-\eqref{RT45} that $(\bm S(t),\bm I(t))\to \Big(\bm r, ({N-\sum_{j\in\Omega} r_j})\bm \alpha\Big)$ as $t\to\infty$.

    \medskip

    \noindent{\rm (ii)} Suppose that $d_S=d_I$. Similar to the proof of Theorem \ref{T1-mass-action-DEF-stability}, 
 $\bm Z(t):=\bm S(t)+\bm I(t)$ satisfies \eqref{RT7}. And,
    \begin{equation}\label{RT48-b}
        \limsup_{t\to\infty} I_j(t)\le  I_{0j},\quad \forall\ j\in\Omega,
    \end{equation}
    where $\bm I_{0}$ is the unique positive solution of \eqref{RT9} with $\varepsilon=0$. By \eqref{RT7-2},  
    \begin{equation}\label{RT48}
        \frac{d\bm I}{dt}\ge d_I\mathcal{L}\bm I+\Big(\bm\beta\circ\Big( (1-\varepsilon)N\bm\alpha-\bm I(t)\Big)-\bm\gamma)\circ \bm I,\quad \forall\ t>t_{\varepsilon}.
    \end{equation}
    Since $\mathcal{R}_0>1$, we have $\sigma_*(d_I\mathcal{L}+{\rm diag}(N\bm\beta\circ\bm\alpha-\bm\gamma))>0$. Noticing
    $$
    \sigma_*(d_I\mathcal{L}+{\rm diag}(N\bm\beta\circ\alpha-\bm\gamma))\to \sigma_*(d_I\mathcal{L}+{\rm diag}((1-\varepsilon)N\bm\beta\circ\alpha-\bm\gamma))\quad \text{as}\ \varepsilon\to0,
    $$
   we have 
    $$\sigma_*(d_I\mathcal{L}+{\rm diag}((1-\varepsilon)N\bm\beta\circ\alpha-\bm\gamma))>0\quad \text{for}\; 0<\varepsilon\ll 1.$$
    Thus, there is $0<\varepsilon_0\ll 1$ such that the system 
    \begin{equation}\label{RT49}
        \bm 0=d_I\mathcal{L}\bm I^{\varepsilon,{\rm low}} +\Big(\bm\beta\circ\Big( (1-\varepsilon)N\bm\alpha-\bm I^{\varepsilon,{\rm low}}\Big) -\bm\gamma\Big)\circ\bm I^{\varepsilon,{\rm low}} 
    \end{equation}
    has a unique positive and stable solution $\bm I^{\varepsilon,{\rm low}}$. Observe that $\bm I^{\varepsilon,{\rm low}}$ is decreasing in $\varepsilon$. Hence $\bm I^{\varepsilon,{\rm low}}\to \bm I_0$ as $\varepsilon\to 0$, where $\bm I_0$ is the unique positive solution of \eqref{RT9} when $\varepsilon=0$. By \eqref{RT48}-\eqref{RT49} and the comparison principle for monotone dynamical systems, we have that 
    $$
    \liminf_{t\to\infty} I_j(t)\ge  I_j^{\varepsilon,{\rm low}}, \quad \forall\ j\in\Omega.
    $$
    Taking $\varepsilon\to 0$, we obtain 
    $$
    \liminf_{t\to\infty} I_j(t)\ge  I_{0j},\quad \forall\ j\in\Omega.
    $$
    This together with \eqref{RT48-b} yields that $\bm I(t)\to \bm I_0$ as $t\to\infty$. Noticing \eqref{RT7},
    $ 
    (\bm S(t),\bm I(t))\to (N\bm\alpha-\bm I_0,\bm I_0)
    $ as $t\to\infty$.
 \end{proof}
 \begin{rk}
In the absence of the disease,  the  population will ideally be distributed according to the DFE (see Lemma \ref{lem0}), in which case the susceptible population will be distributed proportionally to $\bm\alpha$. If the distribution of the risk function is proportional to $\bm\alpha$, hence proportional to the distribution of the susceptible population at the DFE, then the relative  dispersal rates $d_S$ and $d_I$ of the infected and susceptible individuals  become insignificant  and the global dynamics is governed by the local dynamics. In this case, the local reproduction number, defined as $\mathcal{R}_i:=\frac{N\beta_i\alpha_i}{\gamma_i}=\frac{N\alpha_i}{r_i}$, $i\in\Omega$, is patch independent and $\mathcal{R}_0=\mathcal{R}_i$ for all $i\in\Omega$. In this case, Theorem \ref{T1-mass-action-DEF-stability}-{\rm (iv)} and Theorem \ref{T1-mass-action-EE-stability}-{\rm (i)} confirm that the global dynamics of the disease is solely determined by the sign of $\mathcal{R}_0-1$. When both the susceptible and infected populations have the same dispersal rate, that is $d=d_I=d_S$, Theorem \ref{T1-mass-action-DEF-stability}-{\rm (iii)} and Theorem \ref{T1-mass-action-EE-stability}-{\rm (ii)} suggest that the global dynamics of the disease is again determine by  the sign of $\mathcal{R}_0-1$. Note in the latter case that  $\mathcal{R}_0$ depends on $d$.
\end{rk}

\section{Asymptotic profiles of EE solutions}

In this section, we consider the asymptotic profiles of the EE as $d_S$ and/or $d_I$ approach zero. 

\subsection{The case of $d_S\to 0$}
Firstly, we consider the case that $d_I$ is fixed and $d_S$ is approaching zero. 
 
 \begin{tm}\label{thm-ee-ds}Suppose that {\bf (A1)}-{\bf (A3)} holds and $d_I>0$ is fixed.  If \eqref{model-mass-action} has an EE solution for some $d_S=d_S^0>0$, then \eqref{model-mass-action} has at least one EE solution for any $d_S\in (0, d_S^0)$.  Moreover, the following statements about the EE  of \eqref{model-mass-action} hold:
 \begin{itemize}
     \item[\rm (i)] 
     If $N< \sum_{i\in\Omega}r_i$,   the EE $(\bm S, \bm I)$ of \eqref{model-mass-action} satisfies that $\bm I\to \bm {\bm 0}$, and up to a subsequence $\bm S\to \bm S^*$ as $d_S\to 0$, where  $\bm S^*=(N+d_I\sum_{i\in\Omega}\bar I_i^*)\bm\alpha-d_I\bar {\bm I}^*$  and $\bar {\bm I}^*$ is a nonnegative solution of  
\begin{equation}\label{barIs}
d_I\sum_{j\in\Omega}L_{ij}\bar I_j^*+ \beta_i\left(\alpha_iN-r_i+d_I\alpha_i\sum_{i\in\Omega}\bar I^*_i-d_I\bar I_i^* \right)\bar I_i^*=0, \ \ i\in\Omega.
\end{equation}

\item[\rm (ii)]  If $N=\sum_{i\in\Omega}r_i$, then  the EE $(\bm S, \bm I)$ of \eqref{model-mass-action} satisfies that $\bm I\to \bm {\bm 0}$ as $d_S\to 0$. Furthermore, up to a subsequence, either $\bm S\to \bm r$ or $\bm S\to \bm S^*$ as $d_S\to 0$, where  $\bm S^*$ is the same as {\rm (i)}. 

     \item[\rm (iii)] If $N> \sum_{i\in\Omega}r_i$, then \eqref{model-mass-action} has an EE $(\bm S, \bm I)$  for any $0<d_S\ll 1$ such that $(\bm S, \bm I)\to (\bm S^*, \bm I^*)$ as $d_S\to 0$, where
     $(\bm S^*, \bm I^*)=(\bm r, (N-\sum_{i\in\Omega}r_i)\bm \alpha)$.
 \end{itemize}
\end{tm}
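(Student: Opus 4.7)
The plan is to use the bijection from Lemma \ref{lem3} between EE solutions and pairs $(d_S, l)$ with $l > 1/\mathcal{R}_0$ satisfying $\mathcal{F}(d_S, l) = N$, where $\mathcal{F}(d_S, l) := \mathcal{N}(l) + d_S l \sum_j U_j^l$ is the function from the proof of Theorem \ref{T2-mass-action-EE-Multiple}. For the first claim, suppose $\mathcal{F}(d_S^0, l_0) = N$ for some $l_0 > 1/\mathcal{R}_0$. The strict monotonicity of $\mathcal{F}$ in $d_S$ yields $\mathcal{F}(d_S, l_0) < N$ for every $d_S \in (0, d_S^0)$, and combining this with $\mathcal{F}(d_S, l) \to \infty$ as $l \to \infty$ (from \eqref{RT200}) and continuity gives some $l^{d_S} > l_0$ with $\mathcal{F}(d_S, l^{d_S}) = N$; Lemma \ref{lem3}-{\rm (ii)} then produces the EE.

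For the asymptotic analysis, every EE is written as $\bm S = l(N\bm\alpha - d_I \bm U^l)$, $\bm I = d_S l \bm U^l$ with $l = l^{d_S}$ solving $\mathcal{F}(d_S, l) = N$. I extract a sequence $d_{S_n} \to 0$ and split according to whether $l^{d_{S_n}}$ diverges to $\infty$ or stays bounded. In the divergent subcase, Lemma \ref{lem2}-{\rm (i)} gives $l^{d_{S_n}}(N\bm\alpha - d_I \bm U^{l^{d_{S_n}}}) \to \bm r$ and $\sum_j U_j^{l^{d_{S_n}}} \to N/d_I$. Rewriting $\mathcal{F}(d_S, l) = N$ as $d_S l \sum_j U_j^l = N - \mathcal{N}(l)$ and using the limit $\mathcal{N}(l) \to \sum_j r_j$ from \eqref{lem2-eq2} yields $d_{S_n} l^{d_{S_n}} \to d_I(N - \sum_j r_j)/N$, whence $\bm I^{d_{S_n}} = d_{S_n} l^{d_{S_n}} \bm U^{l^{d_{S_n}}} \to (N - \sum_j r_j)\bm\alpha$.

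In the bounded subcase, pass to a further subsequence so that $l^{d_{S_n}} \to l^* \ge 1/\mathcal{R}_0$. The smooth dependence in Lemma \ref{lem2}-{\rm (ii)} together with the limit $\bm U^l \to \bm 0$ as $l \downarrow 1/\mathcal{R}_0$ from \eqref{lem2-eq1} gives $\bm U^{l^{d_{S_n}}} \to \bm U^{l^*}$, under the convention $\bm U^{1/\mathcal{R}_0} := \bm 0$. Hence $\bm I \to \bm 0$ and $\bm S \to \bm S^* := l^*(N\bm\alpha - d_I \bm U^{l^*})$. Setting $\bar{\bm I}^* := l^* \bm U^{l^*}$, passage to the limit in the conservation law $\sum_j(S_j + I_j) = N$ gives $l^* N = N + d_I \sum_j \bar I_j^*$, so $\bm S^* = (N + d_I \sum_j \bar I_j^*)\bm\alpha - d_I \bar{\bm I}^*$. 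Rewriting \eqref{Eq1} as $d_I \mathcal{L}\bm U^l + \bm\beta\circ(\bm S - \bm r)\circ \bm U^l = \bm 0$, multiplying by $l$, and passing to the limit yields $d_I\mathcal{L}\bar{\bm I}^* + \bm\beta \circ (\bm S^* - \bm r)\circ \bar{\bm I}^* = \bm 0$, which is precisely \eqref{barIs}.

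The three cases now follow by comparing $N$ with $\sum_j r_j$. For (i), the divergent subcase would force $\bm I$ to converge to a vector with strictly negative entries, contradicting $\bm I \ge \bm 0$; hence only the bounded subcase can occur. For (ii), both alternatives are permitted: the divergent subcase collapses to $\bm S \to \bm r$ and $\bm I \to \bm 0$ since $N - \sum_j r_j = 0$, while the bounded subcase gives $\bm S \to \bm S^*$. For (iii), repeat the maximal-EE construction from Theorem \ref{T2-mass-action-EE-Multiple}: since $\mathcal{N}(l) \to \sum_j r_j < N$, pick $l^*$ with $\sup_{l \ge l^*} \mathcal{N}(l) < N$, then for all sufficiently small $d_S$ find $l^{\max}_{d_S} > l^*$ with $\mathcal{F}(d_S, l^{\max}_{d_S}) = N$; the identity $d_S\, l^{\max}_{d_S} \sum_j U_j^{l^{\max}_{d_S}} = N - \mathcal{N}(l^{\max}_{d_S})$ together with boundedness of $\sum_j U_j^l$ forces $l^{\max}_{d_S} \to \infty$ as $d_S \to 0$, so the divergent subcase applies. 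The delicate point in the bounded analysis is the potential limit $l^* = 1/\mathcal{R}_0$: conservation forces $l^* N = N$ in that situation, hence $\mathcal{R}_0 = 1$, $\bar{\bm I}^* = \bm 0$, and $\bm S^* = N\bm\alpha$, all consistent with \eqref{barIs} and the claimed form.
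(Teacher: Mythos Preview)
Your proposal is correct and follows essentially the same route as the paper: both arguments use the Lemma~\ref{lem3} parametrization $(\bm S,\bm I)=(l(N\bm\alpha-d_I\bm U^l),\,d_Sl\bm U^l)$ with $\mathcal{F}(d_S,l)=N$, establish existence for $d_S<d_S^0$ via monotonicity in $d_S$ and \eqref{RT200}, and analyze the profiles by a dichotomy on whether $l^{d_S}$ (equivalently $\kappa^*/d_S$) stays bounded or diverges, invoking the asymptotics of Lemma~\ref{lem2}. The only organizational difference is that the paper opens parts {\rm (i)}--{\rm (ii)} by first passing to the limit directly in the equilibrium system to obtain $\bm I^*=k\bm\alpha$ before turning to the $\kappa^*/d_S$ analysis, whereas you work in the $l$-parametrization throughout; the content is the same.
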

\begin{proof}  Let $\mathcal{F}(d_S,l)$ be defined as in \eqref{N-d_I-d_S-def}. Since $(\bm S(\cdot,d_S^0),\bm I(\cdot,d_S^0))$ is an EE solution of \eqref{model-mass-action}, we have 
$$
\mathcal{F}\Big(d_S^0,\frac{\kappa^*_{d_S^0}}{d_S^0}\Big)=N,
$$
where $\kappa^*_{d_S^0}>{d_S^0}/{\mathcal{R}_0}$ is defined as in  Lemma \ref{lem3}. Since the mapping $d_S\mapsto \mathcal{F}(d_S,l)$ is strictly increasing in $d_S$ for any $l\in({1}/{\mathcal{R}_0}, \infty)$,  
$$
\mathcal{F}\Big(d_S,\frac{\kappa^*_{d_S^0}}{d_S^0}\Big)<N,\quad \forall\ 0<d_S<d_S^0.
$$ 
Combining this with \eqref{RT200}, for any $0<d_S<d_S^0$, there is $l^{d_S}>{\kappa^*_{d_S^0}}/{d_S^0}$ such that $\mathcal{F}(d_S,l^{d_S})=N$. Hence by Lemma \ref{lem3}-{\rm (ii)}, $(\bm S,\bm I)=\big(l^{d_S}(N\bm\alpha-d_I\bm U^{l^{d_S}}),d_Sl^{l^{d_S}}\bm U^{l^{d_S}}\big)$ is an EE solution of \eqref{model-mass-action} for any $d_S\in (0, d_S^0)$.


Let $(\bm S,\bm I)$ be an EE solution.
Restricting to a subsequence if necessary, we have $(\bm S, \bm I)\to (\bm S^*, \bm I^*)$ as $d_S\to 0$, where $(\bm S^*, \bm I^*)$ satisfies 
\begin{equation}\label{eeds}
 \left\{
\begin{array}{lll}
-\beta_i S_i^*I_i^*+\gamma_iI_i^*=0, &\quad j\in\Omega,\\
d_I\sum_{j\in\Omega}L_{ij}I_j^*+\beta_i S_i^*I_i^*-\gamma_iI_i^*=0, &\quad j\in\Omega,\\
\sum_{i\in\Omega}(S_i^*+I_i^*)=N. &
\end{array}
 \right.
\end{equation}
Adding up the first two equations of \eqref{eeds}, we obtain $\sum_{j\in\Omega}L_{ij}I_j^*=0$ for all $i\in\Omega$, which implies that $\bm I^*=k\bm \alpha$ for some $k\ge 0$. By the first equation of \eqref{eeds}, either $S_i^*=r_i$ or $I_i^*=0$ for all $i\in\Omega$. If $k>0$, then $\bm S^*=\bm r$. By the third equation of \eqref{eeds}, $k=N-\sum_{i\in\Omega}r_i$.

{\rm (i)} Suppose that $N< \sum_{i\in\Omega}r_i$. Then $k=0$ and $\bm I\to \bm 0$ as $d_S\to 0$. It follows Lemma \ref{lem3} that $\kappa^*=d_S+ \frac{(d_I-d_S)}{N}\sum_{i\in\Omega}I_i\to 0$ as $d_S\to 0$. We claim that 
\begin{equation}\label{kappa-bond}
\limsup_{d_S\to 0}\frac{\kappa^*}{d_S}<\infty.
\end{equation}
Assume to the contrary that the claim is false. Then restricting to a subsequence of $d_{S}\to 0$ if necessary $l:=\kappa^*/d_{S}\to \infty$. By Lemmas \ref{lem2}-\ref{lem3}, 
\begin{equation}\label{S-limit}
\lim_{d_S\to 0} \|\bm S-\bm r\|_1=\lim_{l\to\infty}\|l(N\bm \alpha-d_I\bm U)-\bm r\|_1=0.
\end{equation}
This implies that $N=\lim_{d_S\to 0} \sum_{i\in\Omega}(S_{i}+I_{i})=\sum_{i\in\Omega} r_i$, which is a contradiction. This proves the claim. 

Denote $\bar {\bm I}=\bm I/d_S=\tilde {\bm I} \kappa^*/d_S$. By Lemma \ref{lem2}, we have $\tilde {\bm I}<N\bm\alpha/d_I$. Hence, $\bar {\bm I}$ is bonded above as $d_S\to 0$. 
Since $S_i=(\kappa^*N\alpha_i-d_II_i)/d_S=(N+(d_I-d_S)\sum_{i\in\Omega}\bar I_i)\alpha_i-d_I\bar I_i$, the following equation holds:
$$
d_I\sum_{j\in\Omega}L_{ij}\bar I_j+\left(\beta_i(N+(d_I-d_S)\sum_{i\in\Omega}\bar I_i)\alpha_i-d_I\bar I_i-\gamma_i\right)\bar I_i=0, \ \ i\in\Omega.
$$
Taking $d_S\to0$, restricting to a subsequence if necessary, we may assume $\bar {\bm I}\to \bar{\bm I}^*$, where $\bar{\bm I}^*$ satisfies \eqref{barIs}. Then $\bm S\to \bm S^*$ as $d_S\to 0$, where $\bm S^*=(N+d_I\sum_{i\in\Omega}\bar I_i^*)\bm\alpha-d_I\bar {\bm I}^*$.

{\rm (ii)} Suppose that $N=\sum_{i\in\Omega}r_i$. Then $k=0$ and $\bm I\to 0$ as $d_S\to 0$.  Now, we distinguish two cases. In the first case, suppose that \eqref{kappa-bond} holds. Then as in {\rm (i)}, up to a subsequence,  $\bm S\to \bm S^*$ as $d_S\to 0$,   where  $\bm S^*=(N+d_I\sum_{i\in\Omega}\bar I_i^*)\bm\alpha-d_I\bar {\bm I}^*$ and  $\bar{\bm I}^*$ is a nonnegative solution of \eqref{barIs}. In the second case, suppose that \eqref{kappa-bond} does not hold.  Then restricting to a subsequence if necessary $l:=\kappa^*(d_{S})/d_{S}\to \infty$ as $d_S\to 0$. It then follows from Lemmas \ref{lem2}-\ref{lem3} that \eqref{S-limit} holds. 

{\rm (iii)} Suppose that $N>\sum_{i\in\Omega}r_i$. Then there is $l^*>\frac{1}{\mathcal{R}_0}$ such that \eqref{RT33} holds. Therefore, the EE solution $(\bm S^{\rm max},\bm I^{\rm max})$ of \eqref{model-mass-action} for $0<d_S\ll 1$ of Theorem \ref{T2-mass-action-EE-Multiple}-{\rm (ii-1)} are well defined and satisfy the desired asymptotic profiles as $d_S\to 0$. 
\end{proof}

When $\bm r\in{\rm span}(\bm \alpha)$, it follows from Theorem \ref{T1-mass-action-EE-stability}-{\rm (i)} that \eqref{model-mass-action} has a (unique) EE solution given by $(\bm r,(N-\sum_{j\in\Omega}r_j)\bm \alpha)$ for every $d_S>0$ and $d_I>0$ if $\mathcal{R}_0>1$. In this case, the EE is independent of the dispersal rates of the population, and the conclusion of Theorem \ref{thm-ee-ds}-{\rm (iii)} holds along any sequence of $d_S$ converging to zero. If $\bm r\notin{\rm span}(\bm \alpha)$, $\mathcal{R}_0>1$ and $N>\sum_{j\in\Omega}r_j$, Theorem \ref{thm-ee-ds}-{\rm (iii)} establishes the convergences of the EE solutions as $d_S$ tends to zero along a subsequence. Our next result shows that this convergence  holds along all sequences of $d_S$ converging to zero if the total population size exceeds some critical number.

\begin{prop}\label{thm-ee-ds-2} Suppose that {\bf (A1)}-{\bf (A3)} holds and $\bm r\notin {\rm span}(\bm \alpha)$, and fix $d_I>0$. Then there exists $ N^*_{d_I}>0$ with
$$
\max\Big\{\sum_{i\in\Omega}r_i,\frac{1}{\rho({\rm diag}(\bm\alpha\circ\bm\beta)V^{-1})}\Big\}\le N^*_{d_I}<(\bm r/\bm \alpha)_{M}
$$ 
such that   for any $N>N^*_{d_I}$, we have $\mathcal{R}_0>1$. Moreover, the EE solution $(\bm S,\bm I)$ of \eqref{model-mass-action}  satisfies $(\bm S,\bm I)\to (\bm r,(N-\sum_{j\in\Omega}r_j)\bm\alpha)$ as $d_S\to 0$.
    
\end{prop}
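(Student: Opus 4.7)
The plan is to normalize the family \eqref{Eq1} to expose a quantity independent of the total mass $N$ and then to control that quantity via the hypothesis $\bm r\notin{\rm span}(\bm\alpha)$. Setting $m:=lN$ and $\tilde{\bm W}^m:=\bm U^l/N$ rewrites \eqref{Eq1} as
$$d_I\mathcal{L}\tilde{\bm W}+(m\bm\beta\circ(\bm\alpha-d_I\tilde{\bm W})-\bm\gamma)\circ\tilde{\bm W}=\bm 0,$$
which admits a unique positive solution $\tilde{\bm W}^m$ precisely for $m>m_0:=1/\rho({\rm diag}(\bm\alpha\circ\bm\beta)V^{-1})$, and a direct computation yields $\mathcal{N}(l)=\mathcal{M}(lN)$ with $\mathcal{M}(m):=m\sum_{j}(\alpha_j-d_I\tilde W_j^m)=\sum_j\tilde Z_j^m$ where $\tilde{\bm Z}^m:=m(\bm\alpha-d_I\tilde{\bm W}^m)$ solves the analog of \eqref{Z-l=eq}. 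By \eqref{lem2-eq1}--\eqref{lem2-eq2}, $\mathcal{M}$ extends continuously to the compact interval $[m_0,\infty]$ with boundary values $\mathcal{M}(m_0)=m_0$ and $\mathcal{M}(\infty)=\sum_{j\in\Omega}r_j$. I would then set $N^*_{d_I}:=\sup_{m>m_0}\mathcal{M}(m)$ and aim to prove $\max\{m_0,\sum_j r_j\}\le N^*_{d_I}<(\bm r/\bm\alpha)_M$.

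The lower bound is immediate from the boundary values. The strict upper bound reduces to three estimates. For each $m\in(m_0,\infty)$, the analog of \eqref{Z-l-bounds} gives $\tilde{\bm Z}^m\le(\bm r/\bm\alpha)_M\bm\alpha$ and hence $\mathcal{M}(m)\le(\bm r/\bm\alpha)_M$; equality would force $\tilde{\bm Z}^m=(\bm r/\bm\alpha)_M\bm\alpha$ and, on substituting into its defining equation, $\bm\beta\circ(\bm r-(\bm r/\bm\alpha)_M\bm\alpha)\circ\tilde{\bm W}^m=\bm 0$, hence $\bm r=(\bm r/\bm\alpha)_M\bm\alpha$, a contradiction. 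At infinity, $\mathcal{M}(\infty)=\sum_j\alpha_j(r_j/\alpha_j)<(\bm r/\bm\alpha)_M$ because $\sum_j\alpha_j=1$ and the $r_j/\alpha_j$ are nonconstant. At $m_0$, if $m_0\ge(\bm r/\bm\alpha)_M$ then ${\rm diag}(m_0\bm\alpha\circ\bm\beta-\bm\gamma)$ would be nonnegative and nonzero, so by irreducibility of $\mathcal{L}$ and Perron--Frobenius monotonicity one would get $\sigma_*(d_I\mathcal{L}+{\rm diag}(m_0\bm\alpha\circ\bm\beta-\bm\gamma))>\sigma_*(d_I\mathcal{L})=0$, contradicting the characterization of $m_0$ as the unique $s$ with $\sigma_*(d_I\mathcal{L}+{\rm diag}(s\bm\alpha\circ\bm\beta-\bm\gamma))=0$. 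Continuity on the compact set $[m_0,\infty]$ then upgrades these pointwise strict inequalities to $N^*_{d_I}<(\bm r/\bm\alpha)_M$.

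Once $N>N^*_{d_I}$, the inequality $N>m_0=1/\rho$ yields $\mathcal{R}_0=N\rho>1$, so Theorem \ref{T2-mass-action-EE-Existence}\,(i) guarantees an EE for every $d_S>0$. Any such EE $(\bm S,\bm I)$ admits, by Lemma \ref{lem3}\,(i), the representation $(l(N\bm\alpha-d_I\bm U^l),d_Sl\bm U^l)$ with $l>1/\mathcal{R}_0$ satisfying $\mathcal{N}(l)+d_Sl\sum_j U_j^l=N$. Since $\mathcal{N}(l)=\mathcal{M}(lN)\le N^*_{d_I}<N$, the infected mass obeys $d_Sl\sum_j U_j^l\ge N-N^*_{d_I}>0$. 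Along any sequence $d_S\to0$, I would rule out a bounded subsequence $l$: any subsequential limit $l^*\in[1/\mathcal{R}_0,\infty)$ would force $d_Sl\sum_j U_j^l\to0$ (by continuity of $\bm U^l$ on $(1/\mathcal{R}_0,\infty)$, and by $\bm U^l\to\bm 0$ in the degenerate case $l^*=1/\mathcal{R}_0$), yielding $\mathcal{N}(l^*)=N$ in the limit and contradicting $\mathcal{N}\le N^*_{d_I}<N$. Hence $l\to\infty$, and \eqref{lem2-eq1} gives $\bm S=l(N\bm\alpha-d_I\bm U^l)\to\bm r$, while $\sum_j I_j=N-\mathcal{N}(l)\to N-\sum_j r_j$ combined with $\bm U^l/\sum_j U_j^l\to\bm\alpha$ (since $d_I\bm U^l\to N\bm\alpha$) delivers $\bm I\to(N-\sum_j r_j)\bm\alpha$. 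The delicate step throughout is the strict bound $N^*_{d_I}<(\bm r/\bm\alpha)_M$, where the hypothesis $\bm r\notin{\rm span}(\bm\alpha)$ is genuinely used in each of the three sub-arguments above.
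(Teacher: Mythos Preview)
Your proposal is correct and follows essentially the same route as the paper: both define $N^*_{d_I}$ as the supremum of the same scalar function (you normalize via $m=lN$, the paper carries $l$ and $N$ separately but works with $lN$), both bound it above by $(\bm r/\bm\alpha)_M$ through the supersolution $(\bm r/\bm\alpha)_M\bm\alpha$ of the $\bm Z$-equation, and both conclude $l\to\infty$ as $d_S\to0$ from the gap $N-N^*_{d_I}>0$ in the constraint $\mathcal{N}(l)+d_Sl\sum_jU^l_j=N$.

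Two minor differences are worth noting. First, for the strict inequality at the left endpoint $m_0$, the paper invokes the strict monotonicity of $\rho({\rm diag}(\bm\alpha\circ\bm\beta)V^{-1})$ in $d_I$ (quoting an external reference) together with the limit \eqref{R-0-limit}, whereas you argue directly via Perron--Frobenius monotonicity of $\sigma_*$ in the diagonal part; your argument is more self-contained. Second, to upgrade the pointwise strict inequality $\mathcal{M}(m)<(\bm r/\bm\alpha)_M$ to a strict bound on the supremum, you invoke compactness of $[m_0,\infty]$ after a continuous extension, while the paper takes a maximizing sequence and treats the cases $L=\infty$ and $L\in[R_*,\infty)$ separately; these are two packagings of the same compactness principle.
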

\begin{proof} 
Note from \eqref{F}-\eqref{R-0} that $\mathcal{R}_0=N\rho({\rm diag}(\bm\alpha\circ\bm\beta)V^{-1})$. Hence, we have $\rho({\rm diag}(\bm\alpha\circ\bm\beta)V^{-1})={\mathcal{R}_0}/{N}$, and 
$$
l>\frac{1}{\mathcal{R}_0}\quad \text{if and only if}\quad lN>R_*:=\frac{1}{\rho({\rm diag}(\bm\alpha\circ\bm\beta)V^{-1})}.
$$ 
So by Lemma \ref{lem2}, \eqref{Eq1} has a positive solution if and only if $lN>R_*$. 

Let $\mathcal{N}$ be defined by \eqref{N-d_I-def}. To emphasize the dependence of $\mathcal{N}$ on $N$, we will write it as $\mathcal{N}(l, N)$, which is defined if $lN>R_*$. 
Let 
\begin{equation}\label{N-critic-def}
    N^*_{d_I}:=\sup_{lN>R_*}\mathcal{N}(l,N).
\end{equation}
We claim that 
\begin{equation}\label{N-critic-bounds}
    \max\Big\{\sum_{j\in\Omega}r_j,R_*\Big\}\le N^*_{d_I}<(\bm r/\bm \alpha)_M.
\end{equation}
It is clear from \eqref{lem2-eq2} that the lower bound for $N^*_{d_I}$ in \eqref{N-critic-bounds} holds. Since $\bm r\notin {\rm span}(\bm \alpha)$, it is easy to see that  $(\bm r/\bm\alpha)_M\bm\alpha$ is a strict supersolution of \eqref{Z-l=eq}. Hence,
\begin{equation}\label{DFF1}
   \bm Z^{l}\ll (\bm r/\bm \alpha)_M\bm\alpha,\quad \forall\ lN>R_*,
\end{equation}
where $\bm Z^{l}$ is the solution of \eqref{Z-l=eq}.
It follows that 
\begin{equation}\label{DFF2}
    \mathcal{N}(l,N)=\sum_{j\in\Omega}Z^l_j<(\bm r/\bm\alpha)_M,\quad \forall\ lN>R_*.
\end{equation}
Thus, $N^*_{d_I}\le(\bm r/\bm\alpha)_M$. 
To complete the proof of \eqref{N-critic-bounds}, it remains to show that the last inequality is strict.  Since $\bm r\notin {\rm span}(\bm\alpha)$, it follows from \cite[Theorems 2.6 and 2.7]{chen2020asymptotic} that $\rho({\rm diag}(\bm\beta\circ\bm\alpha)V^{-1})$ is strictly decreasing in $d_I$. Hence by \eqref{R-0-limit}, we have that
\begin{equation}\label{DF3}
    R_*=\frac{1}{ \rho({\rm diag}(\bm\beta\circ\bm\alpha)V^{-1})}<\frac{\sum_{j\in\Omega}\alpha_j\gamma_j}{\sum_{j\in\Omega}\alpha_j^2\beta_j}=\frac{\sum_{j\in\Omega}\alpha_j^2(r_j/\alpha_j)\beta_j}{\sum_{j\in\Omega}\alpha_j^2\beta_j}<(\bm r/\bm\alpha)_M.
\end{equation}
Let $\{(l^m,N^m)\}_{m\ge 1}$ be a sequence of positive numbers satisfying $l^mN^m>R_*$ for every $m\ge 1$ such that 
$$ 
\lim_{m\to\infty}\mathcal{N}(l^m,N^m)=N^*_{d_I}.
$$
Without loss of generality, we may suppose that $l^mN^m\to L\in[R_*,\infty]$ as $m\to\infty$.  We distinguish two cases.

\noindent{\bf Case 1.} $L=\infty$. Observe  that $\underline{\bm U}^{l}:=\frac{1}{ld_I}(lN-(\bm r/\bm \alpha)_M)\bm\alpha$ is a subsolution of \eqref{Eq1}. Hence 
$$
\frac{1}{d_Il^m}(l^mN^m-(\bm r/\bm \alpha)_M)\bm\alpha<\bm U^{l^m},\quad \forall\ m\gg1.
$$
It follows from \eqref{Z-l=eq}-\eqref{Z-l-bounds} that 
\begin{align*}
\|\bm Z^{l^m}-\bm r\|_{1}=\Big\| \mathcal{L}\bm Z^{l^m}/(l^m\bm \beta\circ\bm U^{l^m})\Big\|_{1}
\le & \frac{d_I\|\mathcal{L}\|_1\|\bm Z^{l^m}\|_1}{\bm\beta_m(l^mN^m-(\bm r/\bm \alpha)_M)\bm\alpha_m}\cr 
\le &  \frac{d_I\|\mathcal{L}\|_1\bm r_M\|\bm \alpha\|_1}{\bm\beta_m(l^mN^m-(\bm r/\bm \alpha)_M)\bm\alpha_m^2} \to 0\quad \text{as}\ m\to\infty.
\end{align*}
Thus, 
$$ 
N^{*}_{d_I}=\lim_{m\to\infty}\sum_{j\in\Omega}Z_j^{l^m}=\sum_{j\in\Omega}r_j=\sum_{j\in\Omega}\frac{r_j}{\alpha_j}\alpha_j<(\bm r/\bm \alpha)_M\sum_{j\in\Omega}\alpha_j=(\bm r/\bm \alpha)_M.
$$

\noindent{\bf Case 2.} $L\in[R_*,\infty)$. Set $\bm W^{l^m}:=l^{m}{\bm U}^{l^m}$ for each $m\ge 1$. Then, by \eqref{Eq1},
\begin{equation}\label{Eq1-prime}
0=d_I\mathcal{L}\bm W^{l^m}+\big(\bm\beta\circ(l^mN^m\bm\alpha-d_I\bm W^{l^m})-\bm\gamma)\circ\bm W^{l^m},\quad \forall\ m>1.
\end{equation}
It is easy to check that $\frac{l^mN^m}{d_I}\bm\alpha$ is a super solution, so by the comparison principle we have $\bm W^{l^m}\le \frac{l^mN^m}{d_I}\bm\alpha$ for all $m\ge 1.$ Since $l^mN^m\to L$ as $m\to \infty$,   passing to a subsequence if necessary, we may suppose that $\bm W^{l^m}\to \bm W^{\infty}$ as $m\to\infty$. Note that if $L=R_*$, then $\bm W^{\infty}=\bm 0$, while $\bm W^{\infty}\gg \bm 0$ if $L>R_*$. Now, if $L=R_*$ then 
$$
\bm Z^{l^m}=l^mN^m\bm\alpha-d_I\bm W^{l^m}\to R_*\bm\alpha\quad \text{as}\ m\to\infty.
$$
This together with \eqref{DF3} implies that 
$$
N^*_{d_I}=\lim_{m\to\infty}\sum_{j\in\Omega}Z^{l^m}_j=R_*<(\bm r/\bm\alpha)_M.
$$
On the other had, if $L>R_*$, then $\bm W^{l^m}\to \bm W^{\infty}\gg \bm 0$ and  $\bm Z^{l^m}\to \bm Z^{\infty}:= L\bm \alpha-d_I\bm W^{\infty}$ as $m\to\infty.$ Moreover, thanks to \eqref{Eq1-prime},
$$
0=d_I\mathcal{L}\bm W^{\infty}+(\bm\beta\circ(L\bm \alpha-d_I\bm W^{\infty})-\bm\gamma)\circ \bm W^{\infty}.
$$
Since $L>R_*$, taking $N=L$ and  $l=1$ in \eqref{Eq1}, we have that $\bm W^{\infty}=\bm U^{1}$ and $\bm Z^{\infty}=\bm Z^{1}$. Therefore, by \eqref{DFF2},
$$
N_{d_I}^*=\lim_{m\to\infty}\mathcal{N}(l^m,N^m)=\sum_{j\in\Omega}Z^{\infty}_j=\mathcal{N}(1,L)<(\bm r/\bm \alpha)_M.
$$
Combing cases 1 and 2, we obtain that $N^*_{d_I}<(\bm r/\bm\alpha)_M$, which completes the proof of \eqref{N-critic-bounds}.

Fix $ N>N^*_{d_I}$.  Hence, $\mathcal{R}_0={N}/{R_*}>1$.  Thus by Theorem \ref{T2-mass-action-EE-Existence}, \eqref{model-mass-action} has an  EE solution $(\bm S(\cdot,d_S),\bm I(\cdot,d_S))$ for all $d_S>0$.  For $d_S>0$, notice that   $(\bm S(\cdot, d_S),\bm I(\cdot,d_S))=(\bm Z^l,d_Sl\bm U^{l})$, where $l={\kappa^*}/{d_S}$ is depending on $d_S$ and $\kappa^*$ is  as in Lemma \ref{lem3}-{\rm (i)}. Since
$$
N=\mathcal{N}(l, N)+ld_S\sum_{j\in\Omega}U^{l}_j,
$$
we have 
$$
l=\frac{N-\mathcal{N}(l, N)}{d_S\sum_{j\in\Omega}U^l_j}\ge \frac{N-N^*_{d_I}}{d_S\sum_{j\in\Omega}U^l_j}\to \infty\quad \text{as}\ d_S\to0.
$$
We can then proceed by the similar arguments after \eqref{RT32} to establish that $({\bm S}(\cdot,d_S),{\bm I}(\cdot,d_S))\to ({\bm r},(N-\sum_{j\in\Omega}r_j)\bm\alpha)$ as $d_S\to0$.
\end{proof}
\begin{rk} Suppose that the hypotheses of Proposition \eqref{thm-ee-ds-2} hold. Let $N^*_{d_I}$ be as in Proposition \eqref{thm-ee-ds-2}. Then alternative-{\rm (iii)} of Theorem \ref{thm-ee-ds} holds for any $N>N^*_{d_I}$, in particular for $N^*_{d_I}<N<(\bm r/\bm\alpha)_M$. If the  matrix $\mathcal{L}=(L_{i,j})_{ij=1}^n$ is symmetric, then $\bm \alpha =\frac{1}{n}{\bm 1}$ and $(\bm r/\bm \alpha)_M=n\bm r_{M}$. So $N^*_{d_I}<n\bm r_M$, and Proposition \ref{thm-ee-ds-2}  improves \cite[Theorem 7-{\rm (iii)}]{li2023sis} even for the symmetric case.   
\end{rk}

\subsection{The case of $d_I\to 0$}
Then we consider the asymptotic profile of the EE solutions of \eqref{model-mass-action} as $d_I$ approaches zero. For convenience, define 
\begin{equation} \label{Omega-star}
\Omega^*:=\left\{i\in\Omega\ :\ {r_i}/{\alpha_i}=(\bm r/\bm \alpha)_m\right\},
\end{equation}
and define
${ L}^{\Omega^*}_{ij}={ L}_{ij} 
 $ for each $i,j\in \Omega^*$. Given a nonempty subset $\mathcal{O}\subset\Omega$ and a column vector $\bm X\in\mathbb{R}^n$, we let $\bm X^{\mathcal{O}}=(X_i)^T_{i\in \mathcal{O}}$ as the projection of $\bm X$ on the Euclidean space $\mathbb{R}^{|\mathcal{O}|}$.

\begin{tm}\label{thm-ee-di}   Suppose that {\bf (A1)}-{\bf (A3)} holds, $\bm r\notin{\rm span}(\bm \alpha)$, $N>(\bm r/\bm\alpha)_m$ and  $d_S>0$ is fixed. Then there is $d_I^0>0$ such that \eqref{model-mass-action} has a unique EE solution for every $0<d_I<d_I^0$. The  EE solution $(\bm S,\bm I)$ of \eqref{model-mass-action} for $0<d_I\ll 1$ satisfies 
\begin{equation}\label{eq-1-ee-di}
    \lim_{d_I\to 0}\|\bm S-(\bm r/\bm\alpha)_m\bm\alpha\|=0, \quad  \lim_{d_I\to0}\sum_{j\in\Omega}I_i=N-(\bm r/\bm \alpha)_m, \quad \text{and}\quad \lim_{d_I\to0}I_i=0\quad \forall\ i\in\Omega\setminus\Omega^*.
\end{equation}
Moreover, restricting to a subsequence if necessary, $I_i\to I^*_i$ as $d_I\to0$ for each $i\in\Omega^*$, 
where $\bm I^*\in \mathbb{R}_+^{|\Omega^*|}$ is a solution of 
\begin{equation}\label{eq-2-ee-di}
    \begin{cases}
        0=d_S\mathcal{L}^{\Omega^*}\bm I^{*}+\bm\beta^{\Omega^*}\circ\left( C^*_{d_S}\bm\alpha^{\Omega^*}-\bm I^{*}\right)\circ\bm I^{*},\cr 
        \sum_{i\in\Omega^*}I^*_i=N-\big(\bm r/\bm \alpha\big)_m,
    \end{cases}
\end{equation}
 where the constant $C^*_{d_S}$ satisfies
\begin{equation}\label{eq-3-ee-di}
\frac{N-\big(\bm r/\bm\alpha\big)_m}{\sum_{j\in\Omega^*}\alpha_j}\le C^*_{d_S}\le \frac{(N-(\bm r/\bm\alpha)_m)\bm\beta_m+d_S\|\mathcal{L}^{\Omega^*}\|_{1}}{\alpha_m\bm\beta_m}.
\end{equation}  
Furthermore,   
\begin{equation}\label{eq-4-ee-di}
   \lim_{d_S\to 0} \big\|\bm I^*- \frac{N-(\bm r/\bm \alpha)_m}{\sum_{j\in{\Omega}^*}\alpha_j}\bm\alpha^{{\Omega}^*}\big\|_1=0.
\end{equation}    
\end{tm}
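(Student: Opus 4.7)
The plan hinges on the algebraic identity from Lemma \ref{lem3}(i): any EE satisfies $d_S\bm S+d_I\bm I=\kappa^*N\bm\alpha$, which rewrites as
\begin{equation*}
\bm S=C_{d_I}\bm\alpha-(d_I/d_S)\bm I,\qquad C_{d_I}:=\kappa^*N/d_S.
\end{equation*}
\textbf{Existence and uniqueness} for small $d_I$ follow from Theorem \ref{T2-mass-action-EE-Existence} combined with \eqref{R-0-limit}: indeed $\mathcal{R}_0\to N/(\bm r/\bm\alpha)_m>1$ as $d_I\to 0$ (since $N>(\bm r/\bm\alpha)_m$), giving existence by part (i); and $(1-m_{d_I})d_I\le d_I\to 0<d_S$, so uniqueness holds by part (ii).

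For the \textbf{leading-order limit} \eqref{eq-1-ee-di}, compactness from $\sum(S_i+I_i)=N$ gives a subsequence along which $(\bm S,\bm I)\to(\bm S^*,\bm I^*)$ and $C_{d_I}\to C_0$, so $\bm S^*=C_0\bm\alpha$. Passing to the limit in $d_I\mathcal{L}\bm I+(\bm\beta\circ\bm S-\bm\gamma)\circ\bm I=\bm 0$ yields $(\bm\beta\circ\bm S^*-\bm\gamma)\circ\bm I^*=\bm 0$, so the support of $\bm I^*$ lies in $\{i:C_0\alpha_i=r_i\}$. To pin down $C_0$, I use the fact that the positive vector $\bm I$ is the Perron eigenvector of $d_I\mathcal{L}+{\rm diag}(\bm\beta\circ\bm S-\bm\gamma)$ with eigenvalue zero, i.e.\ $\sigma_*(d_I\mathcal{L}+{\rm diag}(\bm\beta\circ\bm S-\bm\gamma))=0$; as $d_I\to 0$ this limits to $\max_i\beta_i(C_0\alpha_i-r_i)=0$, forcing $C_0=(\bm r/\bm\alpha)_m$. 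Hence $\bm S^*=(\bm r/\bm\alpha)_m\bm\alpha$, $I_i^*=0$ for $i\notin\Omega^*$, and $\sum I_i^*=N-(\bm r/\bm\alpha)_m$ by conservation.

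The \textbf{rescaled equation} \eqref{eq-2-ee-di} comes from substituting the decomposition for $\bm S$ into the $\bm I$-equation and multiplying by $d_S/d_I$ to get
\begin{equation*}
d_S\mathcal{L}\bm I+\bm\beta\circ\big[(d_S/d_I)(C_{d_I}\bm\alpha-\bm r)-\bm I\big]\circ\bm I=\bm 0.
\end{equation*}
On $\Omega^*$, $C_{d_I}\alpha_i-r_i=(C_{d_I}-(\bm r/\bm\alpha)_m)\alpha_i$, so the relevant rescaled constant is $C^*_{d_S,d_I}:=(C_{d_I}-(\bm r/\bm\alpha)_m)d_S/d_I$. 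The \emph{main obstacle} is showing that $C^*_{d_S,d_I}$ stays bounded as $d_I\to 0$. I plan to establish this via two estimates: (a) for $i\notin\Omega^*$, since $\beta_iS_i-\gamma_i$ is bounded away from zero and negative for $d_I$ small, the identity $(\beta_iS_i-\gamma_i)I_i=-d_I(\mathcal{L}\bm I)_i$ forces $I_i=O(d_I)$; (b) summing the $\bm I$-equation over $\Omega$ (zero column sums of $\mathcal{L}$ kill the dispersal term) gives $\sum_i\beta_i(S_i-r_i)I_i=0$, which, after splitting $\Omega=\Omega^*\sqcup(\Omega\setminus\Omega^*)$ and inserting the $O(d_I)$ bound from (a), rearranges to a closed expression for $C^*_{d_S,d_I}$ with bounded numerator and denominator. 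Extracting a further subsequence $C^*_{d_S,d_I}\to C^*_{d_S}$ and noting that $I_j\to 0$ for $j\notin\Omega^*$ makes $(\mathcal{L}\bm I)_i\to(\mathcal{L}^{\Omega^*}\bm I^*)_i$ for $i\in\Omega^*$ then yields \eqref{eq-2-ee-di}.

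For the \textbf{bounds} \eqref{eq-3-ee-di}: the lower bound comes from a supersolution argument. Since $\mathcal{L}\bm\alpha=\bm 0$ and $L_{ij}\ge 0$ for $i\ne j$, we have $(\mathcal{L}^{\Omega^*}\bm\alpha^{\Omega^*})_i=-\sum_{j\notin\Omega^*}L_{ij}\alpha_j\le 0$, so $C^*_{d_S}\bm\alpha^{\Omega^*}$ is a supersolution of \eqref{eq-2-ee-di}, giving $\bm I^*\le C^*_{d_S}\bm\alpha^{\Omega^*}$; summing over $\Omega^*$ and using $\sum I_i^*=N-(\bm r/\bm\alpha)_m$ yields the lower bound. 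The upper bound comes from summing \eqref{eq-2-ee-di} and combining $\sum_{i\in\Omega^*}\beta_i\alpha_iI_i^*\ge\alpha_m\bm\beta_m(N-(\bm r/\bm\alpha)_m)$ with the matrix-norm estimate $\sum_{i\in\Omega^*}|(\mathcal{L}^{\Omega^*}\bm I^*)_i|\le\|\mathcal{L}^{\Omega^*}\|_1(N-(\bm r/\bm\alpha)_m)$. Finally, for \eqref{eq-4-ee-di}, the uniform bound \eqref{eq-3-ee-di} on $C^*_{d_S}$ lets me extract a further subsequence $d_S\to 0$ with $C^*_{d_S}\to C^{**}$ and $\bm I^*\to\bm I^{**}$; the $d_S\to 0$ limit of \eqref{eq-2-ee-di} degenerates to $\bm\beta^{\Omega^*}\circ(C^{**}\bm\alpha^{\Omega^*}-\bm I^{**})\circ\bm I^{**}=\bm 0$, and a Harnack-type estimate in the spirit of Lemma \ref{Harnck-lemma} applied to the prelimit ensures $\bm I^{**}$ is strictly positive on $\Omega^*$, forcing $\bm I^{**}=C^{**}\bm\alpha^{\Omega^*}$; the sum constraint then pins down $C^{**}=(N-(\bm r/\bm\alpha)_m)/\sum_{j\in\Omega^*}\alpha_j$.
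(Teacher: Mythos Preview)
Your overall strategy matches the paper's: both use Lemma \ref{lem3}(i) to write $\bm S=C_{d_I}\bm\alpha-(d_I/d_S)\bm I$, pin down $C_0=(\bm r/\bm\alpha)_m$ via the principal-eigenvalue condition $\sigma_*(d_I\mathcal{L}+{\rm diag}(\bm\beta\circ\bm S-\bm\gamma))=0$, rescale the $\bm I$-equation by $d_S/d_I$, and show the rescaled coefficient stays bounded so as to pass to the limit on $\Omega^*$. Your route to boundedness via the global identity $\sum_{i\in\Omega}\beta_i(S_i-r_i)I_i=0$ combined with the $O(d_I)$ bound off $\Omega^*$ is a legitimate alternative to the paper's approach, which works entirely with the $\Omega^*$-restricted equation and the remainder term $\bm F^{\Omega^*}=\sum_{j\notin\Omega^*}L_{ij}I_j$.

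There is, however, a genuine gap in your argument for \eqref{eq-4-ee-di}. The Harnack constant $c_{d,m_\infty}$ in Lemma \ref{Harnck-lemma} depends on the dispersal parameter $d$; in your application $d=d_S\to 0$, and there is no reason for $c_{d_S,m_\infty}$ to stay bounded, so a uniform lower bound on $\min_{i\in\Omega^*}I_i^*$ does not follow. Moreover, Lemma \ref{Harnck-lemma} requires irreducibility, and $\mathcal{L}^{\Omega^*}$ need not be irreducible. The paper sidesteps both issues by reading the $i$-th component of the rescaled equation as a scalar quadratic in $I_i$ and solving explicitly: $I_i=\tfrac12\big(Q_i+\sqrt{Q_i^2+(\text{nonneg.})}\big)\ge (Q_i)_+$ with $Q_i=C^*_{d_S,d_I}\alpha_i+(d_S/\beta_i)L_{ii}$. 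Since $C^*_{d_S}\ge\eta_1>0$ uniformly and the $L_{ii}$ term is only an $O(d_S)$ correction, this yields $I_i^*\ge\tfrac{\eta_1}{2}\alpha_i$ for all small $d_S$, uniformly in $i\in\Omega^*$, without any irreducibility or Harnack input.

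A secondary point: your supersolution argument for the lower bound in \eqref{eq-3-ee-di} correctly notes $(\mathcal{L}^{\Omega^*}\bm\alpha^{\Omega^*})_i\le 0$, but concluding $\bm I^*\le C^*_{d_S}\bm\alpha^{\Omega^*}$ from this requires a comparison principle for the limiting system on $\Omega^*$, which is not immediate when $\mathcal{L}^{\Omega^*}$ is reducible. The paper obtains the same inequality at the \emph{prelimit} level from the global bound $\bm S\ge(\bm r/\bm\alpha)_m\bm\alpha$ (see \eqref{Z-l-bounds}), which gives $\bm I=(d_S/d_I)(C_{d_I}\bm\alpha-\bm S)\le(d_S/d_I)(C_{d_I}-(\bm r/\bm\alpha)_m)\bm\alpha$ on all of $\Omega$, and then passes to the limit; this avoids any appeal to structure of $\mathcal{L}^{\Omega^*}$.
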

\begin{proof} {First, note that the assumption  $\bm r\notin{\rm span}(\bm \alpha)$ implies that the sets $\Omega^*$ and $\Omega\setminus\Omega^*$ are both nonempty.}  By \cite[Theorem 2.7]{chen2020asymptotic}, it holds that 
\begin{equation}\label{R-0-di-zero-limit}
\lim_{d_I\to 0}\mathcal{R}_0=N(\bm \beta\circ \bm \alpha/\bm \gamma)_M=\frac{N}{(\bm \alpha/\bm r)_m}.
\end{equation}
Since $N>(\bm\alpha/\bm r)_m$,  there is $d_I^1>0$ such that $\mathcal{R}_0>1$ for all $d_I\in (0, d_I^1)$. By Theorem \ref{T2-mass-action-EE-Existence}-{\rm (ii)}, \eqref{model-mass-action} has a unique EE solution for all $d_I\in (0, d_I^0)$, where $d_I^0:=\min\{d_I^1,d_S\}$. 

Let $(\bm S(\cdot, d_I),\bm I(\cdot, d_I))$ be the EE solution of \eqref{model-mass-action} for $0<d_I\le d_I^0$. For each $0<d_I\le d_I^0$, let $\kappa^*_{d_I}$ be as in Lemma \ref{lem3}-{\rm (i)}. Then 
\begin{equation}\label{RT70}
\kappa^*_{d_I}N\bm\alpha=d_S\bm S(\cdot,d_I)+d_I\bm I(\cdot,d_I),\quad \forall\ 0<d_I\le d_I^0.
\end{equation}
Since $d_I\bm I(\cdot,d_I)\to \bm 0$ as $d_I\to 0$ and $(\bm r/\bm \alpha)_m\bm\alpha\le \bm S(\cdot,d_I)\le (\bm r/\bm \alpha)_M\bm\alpha $  for any $0<d_I\le d_I^0$, passing to a subsequence if necessary,  we may suppose that 
$$
\kappa^*_{d_I}\to \kappa^*_0\in\left[\frac{d_S(\bm r/\bm \alpha)_m}{ N}, \frac{d_S(\bm r/\bm \alpha)_M}{N}\right] \quad\text{as}\quad d_I\to 0.
$$ 
By \eqref{RT70}, $\bm S(\cdot, d_I)\to \frac{\kappa^*_0{N}}{d_S}\bm\alpha$ as $d_I\to0$.  It then follows from \cite[Lemma 2.5-{\rm (ii)}]{chen2020asymptotic} that 
\begin{equation*}
    \lim_{d_I\to0}\sigma_*(d_I\mathcal{L}+{\rm diag}(\bm\beta\circ \bm S(\cdot, d_I)-\bm \gamma))=\big(\frac{\kappa^*_0{N}}{d_S}\bm \beta\circ\bm \alpha-\bm \gamma\big)_M.
\end{equation*}
Since $ \sigma_*(d_I\mathcal{L}+{\rm diag}(\bm\beta\circ \bm S(\cdot, d_I)-\bm \gamma))=0$ for all $0<d_I\le d_I^0$, we obtain that  $(\frac{\kappa^*_0{N}}{d_S}\bm \beta\circ\bm \alpha-\bm \gamma)_M=0$, i.e., 
$ {\kappa_0^*}=d_S(\bm r/\bm \alpha)_m/N.$ Since $\kappa^*_0$ is independent of the subsequence, we conclude that $\kappa^*_{d_I}\to  {d_S}(\bm r/\bm \alpha)_m/N$ and $\bm S(\cdot,d_I)\to (\bm r/\bm \alpha)_m\bm\alpha$ as $d_I\to 0$. From which we  have that $\sum_{i\in\Omega}I_i(\cdot, d_I)=N-\sum_{i\in\Omega}S_i(\cdot,d_I)\to N-(\bm r/\bm \alpha)_m$ as $d_I\to 0$. Noticing $d_I\mathcal{L}\bm I+\bm\beta\circ(\bm S(\cdot, d_I)-\bm r)\circ \bm I(\cdot, d_I)=\bm 0$, we have
$$
\lim_{d_I\to0}(\bm S(\cdot, d_I)-\bm r)\circ \bm I(\cdot, d_I)=\bm 0,
$$
which implies that $I_i(\cdot, d_I)\to 0$ as $d_I\to 0$ for every $i\in\Omega\setminus\Omega^*$. This proves \eqref{eq-1-ee-di}.  

Next, we determine the limit of $I_i(\cdot, d_I)$ as $d_I\to 0$ for $i\in\Omega^*$.
To this end, we set  $F^{\Omega^*}_i(\cdot,d_I)=\sum_{j\in\Omega\setminus\Omega^*}L_{ij}I_j(\cdot,d_I)$ for each $i\in\Omega^*$. By the equation of $\bm I$
and \eqref{RT70}, 
\begin{equation}\label{RT71}
    0=d_S\mathcal{L}^{\Omega^*}\bm I^{\Omega^*}(\cdot, d_I)+\bm\beta^{\Omega^*}\circ\left( \frac{d_S\Big(\frac{\kappa^*_{d_I}}{d_S}N-\big(\bm r/\bm\alpha\big)_m\Big)}{d_I}\bm \alpha^{\Omega^*}-\bm I^{\Omega^*}(\cdot,d_I)\right)\circ\bm I^{\Omega^*}(\cdot,d_I)+d_S\bm F^{\Omega^*}(\cdot, d_I).
\end{equation}
    Note that $ F_i^{\Omega^*}\to 0 $ as $d_I\to 0$ for each $i\in\Omega^*$.  By \eqref{RT70} and the fact that $\big(\bm r/\bm \alpha\big)_m\bm\alpha<\bm S$, we obtain that 
    $$
    \frac{d_S\Big(\frac{\kappa^*_{d_I}}{d_S}N-\big(\bm r/\bm\alpha\big)_m\Big)}{d_I}\bm \alpha^{\Omega^*}>\frac{d_S}{d_I}\Big(\frac{\kappa^*_{d_I}}{d_S}N\bm \alpha^{\Omega^*}-\bm S(\cdot,d_I)\Big)= \bm I^{\Omega^*},
    $$
  which implies that
    $$
    \frac{d_S\Big(\frac{\kappa^*_{d_I}}{d_S}N-\big(\bm r/\bm\alpha\big)_m\Big)}{d_I}>\frac{\sum_{j\in\Omega^*}I_j}{\sum_{j\in\Omega^*}\alpha_j}.
    $$
Since $\lim_{d_I\to0}\sum_{j\in\Omega^*}I_j(\cdot,d_I)=N-\big(\bm r/\bm \alpha\big)_m$,
    \begin{equation}\label{RT72}
        \liminf_{d_I\to 0}\frac{d_S\Big(\frac{\kappa^*_{d_I}}{d_S}N-\big(\bm r/\bm\alpha\big)_m\Big)}{d_I}\ge\eta_1:=\frac{N-\big(\bm r/\bm\alpha\big)_m}{\sum_{j\in\Omega^*}\alpha_j} .
    \end{equation}
  We rewrite  \eqref{RT71} as
    $$
   \frac{d_S\Big(\frac{\kappa^*_{d_I}}{d_S}N-\big(\bm r/\bm\alpha\big)_m\Big)}{d_I}\bm\alpha^{\Omega^*}\circ\bm I^{\Omega^*}= \bm I^{\Omega^*}(\cdot,d_I)\circ\bm I^{\Omega^*}(\cdot,d_I)-d_S(\mathcal{L}^{\Omega^*}\bm I^{\Omega^*}(\cdot,d_I)+\bm F^{\Omega^*}(\cdot,d_I))/\bm \beta^{\Omega^*}.
    $$
Therefore, 
    $$
\sum_{j\in\Omega^*}\Big((I_j^{\Omega^*})^2+\frac{d_S}{\bm\beta_m}\big(\|\mathcal{L}^{\Omega^*}\|_{1}\|\bm I^{\Omega^*}\|_1+F_j^{\Omega^*}\big)\Big)\ge \frac{d_S\Big(\frac{\kappa^*_{d_I}}{d_S}N-\big(\bm r/\bm\alpha\big)_m\Big)}{d_I}\sum_{j\in\Omega^*}\alpha_j I_j.
    $$
Since $\sum_{j\in\Omega^*}I_j^{\Omega^*}\to N-(\bm r/\bm\alpha)_m$ and $F_i\to 0$ as $d_I\to 0$ for each $i\in\Omega^*$, we have that
    \begin{align}\label{RT73}    \limsup_{d_I\to0}\frac{d_S\Big(\frac{\kappa^*_{d_I}}{d_S}N-\big(\bm r/\bm\alpha\big)_m\Big)}{d_I}\le& \frac{(N-(\bm r/\bm\alpha)_m)^2+\frac{d_S\|\mathcal{L}^{\Omega^*}\|_1}{\bm \beta_m}(N-(\bm r/\bm\alpha)_m) }{\bm\alpha_m(N-(\bm r/\bm\alpha)_m)}\cr
        =&\frac{(N-(\bm r/\bm\alpha)_m)\bm\beta_m+d_S\|\mathcal{L}^{\Omega^*}\|_1}{\bm\alpha_m\bm\beta_m}:=\eta_2.
    \end{align}
    Therefore, possible after passing to a subsequence, it follows from \eqref{RT71}, \eqref{RT72} and \eqref{RT73} that $ \bm I^{\Omega^*}\to \bm I^*$ as $d_I\to 0$, where $\bm I^*$ solves \eqref{eq-2-ee-di} and  $ C^*_{d_S}\in[\eta_1,\eta_2]$. 
    

    Finally, we prove \eqref{eq-4-ee-di}. To see this, we rewrite  \eqref{RT71} as
    $$
   \bm 0= \bm I^{\Omega^*}(\cdot,d_I)\circ\bm I^{\Omega^*}(\cdot,d_I)-\frac{d_S\Big(\frac{\kappa^*_{d_I}}{d_S}N-\big(\bm r/\bm\alpha\big)_m\Big)}{d_I}\bm\alpha^{\Omega^*}\circ\bm I^{\Omega^*}-d_S(\mathcal{L}^{\Omega^*}\bm I^{\Omega^*}(\cdot,d_I)+\bm F^{\Omega^*}(\cdot,d_I))/\bm \beta^{\Omega^*},
    $$
    and set  
    $$
    Q_i:=\frac{d_S\Big(\frac{\kappa^*_{d_I}}{d_S}N-\big(\bm r/\bm\alpha\big)_m\Big)}{d_I}\alpha_i^{\Omega^*}-\frac{d_S}{\beta_i}L_{ii},\ \ \forall\, i\in\Omega^*.
    $$ 
By the quadratic formula,  we have   
\begin{equation*}
    I_i^{\Omega^*}= \frac{1}{2}\left[ Q_i+\sqrt{Q_i^2+4\frac{d_S}{\beta_i}\Big(\sum_{j\in\Omega^*\setminus\{i\}}L_{ij}I_i^{\Omega^*}+F_i^{\Omega^*}\Big)}\right]\ge \frac{1}{2}(Q_i+|Q_i|),\quad \forall\ i\in\Omega^*.
\end{equation*}
Hence for each $i\in\Omega^*,$
$$
I^*_i=\lim_{d_I\to 0}I_i^{\Omega^*}\ge \lim_{d_I\to0}\frac{1}{2}(Q_i+|Q_i|)=\frac{1}{2}\Big(C^*_{d_S}\alpha_i-\frac{d_S}{\beta_i}L_{ii}+\big|C^*_{d_S}\alpha_i-\frac{d_S}{\beta_i}L_{ii}\big|\Big).
$$
 Since $\inf_{d_S>0}C^*_{d_S}\ge \eta_1>0$,  we can choose $0<d_S^{0}\ll 1$ such that 
\begin{equation}\label{RT73-3}
    I_i^*\ge \frac{\eta_1}{2}\alpha_{i},\quad \forall\ i\in\Omega^*,\ 0<d_S<d_S^0.
\end{equation}
By $d_S\mathcal{L}^{\Omega^*}\bm I^*\to 0$ as $d_S\to 0$ and \eqref{eq-3-ee-di}, passing to a subsequence if necessary, it follows from \eqref{RT73} and the first equation in \eqref{eq-2-ee-di} that there is a positive number  $C^*_0$ such that $\bm I^*\to C^*_0\bm \alpha^{\Omega^*}$. Thanks to the second equation in \eqref{eq-2-ee-di}, we get that $C^*_0=(N-(\bm r/\bm\alpha)_m)/\sum_{j\in\Omega^*}\alpha_j$. This shows that $C^*_0$ is independent of the chosen subsequence. Therefore,  \eqref{eq-4-ee-di} holds.
\end{proof}

\begin{rk}\begin{itemize}
\item[\rm (i)]Suppose that $N>\big(\bm r/\bm\alpha\big)_m$ and $\mathcal{L}^{\Omega^*}$ is irreducible. Then the positive constant $C^*_{d_S}$ and the vector $ \bm I^*$ solving \eqref{eq-2-ee-di} are uniquely determined. Moreover, $I^*_i>0$ for each $i\in\Omega^*$ and $\bm I^{\Omega^*}\to \bm I^*$ as $d_I\to 0$. If in addition $\bm \alpha^{\Omega^*} $ is an eigenvector associated to $\sigma_*(\mathcal{L}^{\Omega^*})$, we have  $C_{d_S}^*=(N-\big(\bm r/\bm\alpha\big)_m)/(\sum_{j\in\Omega^*}\alpha_j)$ that is independent of $d_S$, and $\bm I^*=C_{d_S}^*\bm\alpha^{\Omega^*}$.

\item[\rm (ii)] When the dispersal rate of the susceptible population is small and the dispersal rate of the infected population is even smaller, \eqref{eq-4-ee-di} implies that the infected population will concentrate exactly  on all the most high-risk patches. This result seems to be new even for the case when    $\mathcal{L}$ is symmetric. 

\item[\rm (iii)] Theorem \ref{thm-ee-di} extends and improves \cite[Theorem 8]{li2023sis}, which is for the case when $\mathcal{L}$ is symmetric. 
\end{itemize}
    
\end{rk}

\subsection{The case of $d_I\to 0$ and $d_I/d_S\to \sigma\in [0, \infty]$}
We complete our investigation with the study of the profiles of the EE when both dispersal rates are small.

\begin{tm}\label{T6-ee-di-ds-small}  Suppose that {\bf (A1)}-{\bf (A3)} holds, $\bm r\notin{\rm span}(\bm \alpha) $, $N>(\bm r/\bm\alpha)_m$ and  $\sigma>0$ is fixed. The EE solution $(\bm S,\bm I)$ of \eqref{model-mass-action} for $0<d_I\ll 1$ and $|\frac{d_I}{d_S}-\sigma|\ll 1$ satisfies
\begin{equation}\label{Eq1-T6-ee-di-ds-small}
    \lim_{d_I+|\frac{d_I}{d_S}-\sigma|\to 0}\big\|\bm S-\bm S^{\sigma}\big\|_1=0\quad \text{and}\quad \lim_{d_I+|\frac{d_I}{d_S}-\sigma|\to 0}\|\bm I-\bm I^{\sigma}\|_1=0,
\end{equation}
where $( S^{\sigma}_i, I^{\sigma}_i):=(\min\{l^{\sigma}N\alpha_i, r_i\},\frac{1}{\sigma}(l^{\sigma}N\alpha_i- r_i)_+)$ for each $i\in\Omega$, and $l^{\sigma}$ is the unique positive solution of the algebraic equation
\begin{equation}\label{Eq2-T6-ee-di-ds-small}
    N=\sum_{j\in\Omega}\big[\min\{l^{\sigma}N\bm\alpha_j,\bm r_j\}+\frac{1}{\sigma}(l^{\sigma}N\bm\alpha_j-\bm r_j)_+\big].
\end{equation}
Furthermore, the following conclusions on $(\bm S^{\sigma},\bm I^{\sigma})$ and $l^{\sigma}$ hold.
\begin{itemize}
    \item[\rm (i)] As $\sigma\to 0$, then $l^{\sigma}\to\frac{(\bm r/\bm\alpha)_m}{N}$, $S^{\sigma}\to (\bm r/\bm\alpha)_m\bm\alpha$, $I_j^{\sigma}\to 0$ for  $j\in\Omega\setminus\Omega^*$, and $I_j^{\sigma}\to \frac{N-(\bm r/\bm\alpha )_m}{\sum_{i\in\Omega^*}\alpha_i}\alpha_j$ for  $j\in\Omega^*$, where $\Omega^*$ is defined as in \eqref{Omega-star}.
    \item[\rm (ii)] As $\sigma\to\infty$, the following statements hold.
    \begin{itemize}
        \item[\rm (ii-1)] If $N<\sum_{j\in\Omega}r_j$, then $l^{\sigma}\to l^{\infty}$, where $l^{\infty}\in\Big(\frac{(\bm r/\bm\alpha)_m}{N},\frac{(\bm r/\bm\alpha)_M}{N}\Big)$ satisfies  the algebraic equation $N=\sum_{j\in\Omega}\min\{l^{\infty}N\alpha_j,r_j\}$, $\bm S^{\sigma}\to \min\{l^{\infty}N\bm\alpha,\bm r\} $, and $\bm I^{\sigma}\to \bm 0.$ 
        
        \item[\rm (ii-2)] If $N=\sum_{j\in\Omega}r_j$, then $l^{\sigma}\to \frac{(\bm r/\bm\alpha)_M}{N}$, $\bm S^{\sigma}\to \bm r$, and $\bm I^{\sigma}\to \bm 0$. 
        
        \item[\rm (ii-3)] If $N>\sum_{j\in\Omega}r_j$, then  $l^{\sigma}=\frac{\sigma(N-\sum_{j\in\Omega}r_j)+\sum_{j\in\Omega}r_j }{N}$ for $\sigma\gg 1$,  $\bm S^{\sigma}\to \bm r$ and $ \bm I^{\sigma}\to (N-\sum_{j\in\Omega}r_j)\bm\alpha$.
    \end{itemize}
\end{itemize}
    
\end{tm}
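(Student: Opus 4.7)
The main tool is the reduction from Lemma \ref{lem3}: any EE has the form $\bm S = lN\bm\alpha - (d_I/d_S)\bm I$ with $l := \kappa^*/d_S > 1/\mathcal{R}_0$. The conservation law $\sum_j(S_j+I_j)=N$ then reads
\[
lN = N + \big((d_I/d_S) - 1\big)\sum_j I_j,
\]
squeezing $l$ between $\min\{1, d_I/d_S\}$ and $\max\{1, d_I/d_S\}$. Since $\mathcal{R}_0 \to N/(\bm r/\bm\alpha)_m > 1$ as $d_I \to 0$ by \eqref{R-0-di-zero-limit}, EE solutions exist for all small $d_I$ by Theorem \ref{T2-mass-action-EE-Existence}-(i). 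Uniqueness of $l^\sigma$ solving \eqref{Eq2-T6-ee-di-ds-small} is immediate from the fact that its right-hand side is continuous, piecewise-linear, strictly increasing from $0$ to $\infty$ in $l$.

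Next, I would extract subsequential limits. Given any sequence with $d_I \to 0$ and $d_I/d_S \to \sigma$, boundedness of $(\bm S,\bm I,l)$ yields a subsequential limit $(\bm S^*, \bm I^*, l^*)$ with $l^* \in [\min\{1,\sigma\},\max\{1,\sigma\}]$. The identity $\bm S + (d_I/d_S)\bm I = lN\bm\alpha$ passes to $\bm S^* + \sigma\bm I^* = l^*N\bm\alpha$. Since $d_I\mathcal{L}\bm I \to \bm 0$ and the equilibrium equation is $d_I\mathcal{L}\bm I + \bm\beta\circ(\bm S-\bm r)\circ\bm I = \bm 0$, the limit satisfies $(S_i^* - r_i)I_i^* = 0$ for every $i\in\Omega$.

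The main obstacle is that this pointwise condition does not pin down $S_i^*$ when $I_i^* = 0$; \emph{a priori} one could have $S_i^* > r_i$, which would spoil the explicit formula. To rule this out, I would isolate the $i$-th equilibrium equation as
\[
\big(d_I L_{ii} + \beta_i(S_i - r_i)\big) I_i = -d_I \sum_{j \neq i} L_{ij} I_j.
\]
Any EE satisfies $\bm I \gg \bm 0$ by \textbf{(A1)} and the comparison principle, and irreducibility of $\mathcal{L}$ provides some $j \neq i$ with $L_{ij} > 0$, so the right-hand side is strictly negative. Hence $\beta_i(S_i - r_i) < -d_I L_{ii} = d_I|L_{ii}|$, and letting $d_I \to 0$ yields $S_i^* \le r_i$ for every $i$. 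Combining with $\bm S^* + \sigma\bm I^* = l^*N\bm\alpha$ and $(S_i^*-r_i)I_i^* = 0$ forces $S_i^* = \min\{l^*N\alpha_i, r_i\}$ and $I_i^* = (l^*N\alpha_i - r_i)_+/\sigma$. The constraint $\sum_j(S_j^* + I_j^*) = N$ then makes $l^*$ solve \eqref{Eq2-T6-ee-di-ds-small}, so $l^* = l^\sigma$ by uniqueness; independence of the subsequence delivers the full convergence \eqref{Eq1-T6-ee-di-ds-small}.

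The asymptotic statements (i)--(ii) are purely algebraic consequences of \eqref{Eq2-T6-ee-di-ds-small}. For $\sigma \to 0$, I would first show $l^\sigma N \to (\bm r/\bm\alpha)_m$ (if $l^\sigma N$ stayed bounded away from this value, the positive-part terms would blow up), and then extract the refined expansion $l^\sigma N - (\bm r/\bm\alpha)_m \sim \sigma(N-(\bm r/\bm\alpha)_m)/\sum_{j\in\Omega^*}\alpha_j$ by substituting back, which pins down $I_j^\sigma$ for $j\in\Omega^*$. For $\sigma\to\infty$ I would split on the sign of $N - \sum_j r_j$: when $N > \sum_j r_j$, conservation forces $l^\sigma N\alpha_j > r_j$ for every $j$ once $\sigma$ is large enough, collapsing \eqref{Eq2-T6-ee-di-ds-small} to the explicit affine relation of (ii-3); when $N \le \sum_j r_j$, $l^\sigma$ stays bounded and the positive-part terms vanish in the limit, leaving $\sum_j \min\{l^\infty N\alpha_j, r_j\} = N$, whose unique solution (by strict monotonicity of the left-hand side on $((\bm r/\bm\alpha)_m/N, (\bm r/\bm\alpha)_M/N]$) yields (ii-1) and (ii-2).
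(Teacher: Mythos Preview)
Your proposal is correct and reaches the same conclusion, but the core step differs from the paper's proof. The paper rewrites the $i$-th equilibrium equation as a quadratic in $I_i$ and applies the quadratic formula (equation \eqref{Y2} in the paper) to write $I_i$ explicitly; taking the limit $d_I+|d_I/d_S-\sigma|\to 0$ then gives $I_i\to\frac{1}{\sigma}(l^*N\alpha_i-r_i)_+$ directly, and the formula for $S_i^*$ follows from $\bm S=lN\bm\alpha-(d_I/d_S)\bm I$. You instead pass to subsequential limits first, obtain the complementary-slackness condition $(S_i^*-r_i)I_i^*=0$, and then use irreducibility to extract the one-sided bound $S_i^*\le r_i$ from the sign of $(d_IL_{ii}+\beta_i(S_i-r_i))I_i=-d_I\sum_{j\ne i}L_{ij}I_j<0$; these two facts together with $\bm S^*+\sigma\bm I^*=l^*N\bm\alpha$ pin down the limit formulas. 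Your route is more conceptual and avoids the quadratic-formula bookkeeping, while the paper's route is shorter once the formula is written down and does not need the separate irreducibility step. A second minor difference is in bounding $l$: the paper uses a subsolution $\underline{\bm I}=\frac{d_S}{d_I}(lN-(\bm r/\bm\alpha)_M)_+\bm\alpha$ of \eqref{Y1}, whereas you read the bound directly off the conservation identity $lN=N+((d_I/d_S)-1)\sum_jI_j$, which is simpler. Your treatment of parts {\rm(i)}--{\rm(ii)} matches the paper's in substance.
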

\begin{proof} First, it is easy to see that the function $G_{\sigma}: [0,\infty)\to [0,\infty) $ defined by 
\begin{equation*}
    G_{\sigma}(l)=\sum_{j\in\Omega}\Big[\min\{lN\alpha_j,r_j\}+\frac{1}{\sigma}(lN\alpha_j-r_j)_+\Big],\quad \forall\ l\ge 0
\end{equation*}
is continuous,  strictly increasing, and satisfies 

\begin{equation*}
    G_{\sigma}(0)=0 \quad \text{and}\quad \lim_{l\to\infty}G_{\sigma}(l)=\infty.
\end{equation*}
Thus there is a unique $l^{\sigma}>0$ such that $G_{\sigma}(l^{\sigma})=N$.  Note also that 
\begin{equation}\label{DFF2-1}
    G_{\sigma}(l)=\begin{cases}
       lN, & \text{if}\quad 0\le l\le \frac{(\bm r/\bm\alpha)_m}{N},\cr
       \sum_{j\in\Omega}r_j +\frac{1}{\sigma}(lN-\sum_{j\in\Omega}r_j), & \text{if}\ l\ge \frac{(\bm r/\bm \alpha )_M}{N}.
    \end{cases}
\end{equation}

By the assumption $N>(\bm r/\bm \alpha)_m$ and \eqref{R-0-di-zero-limit}, there is $d_I^0>0$ such that $\mathcal{R}_0>1$ for all $0<d_I<d_I^0$. Hence by Theorem \ref{T2-mass-action-EE-Existence},  \eqref{model-mass-action} has at least one EE solution  $(\bm S,\bm I)$  for all $d_S>0$ and $0<d_I<d_I^0$. By Lemma \ref{lem3}, there is $l^{d_I,d_S}>{1}/{\mathcal{R}_0}$ such that $(\bm S,\bm I)=(l^{d_I,d_S}(N\bm\alpha-d_I\bm U^{l^{d_I,d_s}}),d_Sl^{d_I,d_S}\bm U^{l})$. Multiplying \eqref{Eq1} by $d_Sl^{d_I,d_S}$, we have 
\begin{equation}\label{Y1}
    \bm 0=d_I\mathcal{L}\bm I +\bm\beta\circ\big((l^{d_I,d_S}N\bm\alpha-\bm r)-\frac{d_I}{d_S}\bm I\big)\circ\bm I.
\end{equation}

It is easy to check that $\underline{\bm I}:=\frac{d_S}{d_I}(l^{d_I,d_S}N-(\bm r/\bm \alpha)_M)_+\bm\alpha$ is a subsolution of \eqref{Y1}, then 
$$(l^{d_I,d_S}N-(\bm r/\bm \alpha)_M)_+\bm\alpha  \le\frac{d_I}{d_S}\bm I, $$
    which implies that 
    $$
    (l^{d_I,d_S}N-(\bm r/\bm \alpha)_M)_+\le \frac{d_I}{d_S}\sum_{j\in\Omega}I_j\le\frac{d_I}{d_S} N.
    $$
    Hence,
    \begin{equation*}
       \frac{1}{\mathcal{R}_0}< l^{d_I,d_S}\le \frac{(\bm r/\bm\alpha)_M}{N}+\frac{d_I}{d_S}.
    \end{equation*}
    Therefore, possibly after passing to a subsequence, we may suppose that there is \begin{equation}\label{Y0}
    \frac{(\bm r/\bm\alpha)_m}{N}\le l^*\le\frac{(\bm r/\bm\alpha)_M}{N}+\sigma 
    \end{equation}
    such that  $ l^{d_I,d_S}\to l^{*}$ as $d_I+|\frac{d_I}{d_S}-\sigma|\to 0$. Observe from \eqref{Y1} that 
    $$
    0=d_I\sum_{j\in\Omega}L_{ij}I_j +\big(\beta_i(l^{d_I,d_S}N\alpha_i-r_i)-d_I\sum_{j\in\Omega}L_{ji}\big)I_i -\frac{d_I}{d_S}\beta_iI_i^2,\quad i\in\Omega.
    $$
   By the quadratic formula,
    \begin{small}
    \begin{equation}\label{Y2}
        I_i=\frac{\sqrt{\big(\beta_i(l^{d_I,d_S}N\alpha_i-r_i)-d_I\sum_{j\in\Omega}L_{ji}\big)^2+4\beta_i\frac{d_I}{d_S}d_I\sum_{j\in\Omega}L_{ij}I_j}+\big(\beta_i(l^{d_I,d_S}N\alpha_i-r_i)-d_I\sum_{j\in\Omega}L_{ji}\big)}{2\beta_i\frac{d_I}{d_S}},\quad i\in\Omega.
    \end{equation}
    \end{small}
    Letting $d_I+|\frac{d_I}{d_S}-\sigma|\to 0$ in\eqref{Y2}, we obtain that 
    \begin{equation}\label{Y3}
        \lim_{d_I+|\frac{d_I}{d_S}-\sigma|\to0}I_i=\frac{1}{\sigma}\big(l^*N\alpha_i-r_i\big)_+,\quad i\in\Omega.
    \end{equation}
    This in turn implies that 
    \begin{equation}\label{Y4}
        \lim_{d_I+|\frac{d_I}{d_S}-\sigma|\to0}S_i=\lim_{d_I+|\frac{d_I}{d_S}-\sigma|\to0}\big(l^{d_I,d_I}N\alpha_i-\frac{d_I}{d_S}I_i)=l^*N\alpha_i-(l^{*}N\alpha_i-r_i)_+=\min\{l^*N\alpha_i,r_i\},\quad i\in\Omega.
    \end{equation}
    Since $N=\sum_{j\in\Omega}(S_i+I_i)$, we deduce from \eqref{Y3}-\eqref{Y4} that $G_{\sigma}(l^*)=N$. Thus, $l^*=l^{\sigma}$, which implies that $l^*$ is independent of the chosen subsequence. Therefore, $l^{d_I,d_S}\to l^{\sigma}$ as $d_I+|\frac{d_I}{d_S}-\sigma|\to 0$. It then follows from \eqref{Y3}-\eqref{Y4} that \eqref{Eq1-T6-ee-di-ds-small} holds.

    {\rm (i)} In view of \eqref{Y0}, restricting to a subsequence if necessary, we may assume $l^\sigma\to l^0$ for some $l^0\ge \frac{(\bm r/\bm\alpha)_m}{N}$ as $\sigma\to 0$.
    By \eqref{Eq2-T6-ee-di-ds-small}, $(l^{\sigma}N\alpha_j-r_{j})_+\le \sigma N$ for each $ j\in\Omega$. Taking $\sigma\to 0$, we have $(l^{0}N\alpha_j-r_{j})_+\le 0$
    for each $j\in\Omega$, which implies $l^0\le \frac{(\bm r/\bm\alpha)_m}{N}$. Therefore, $l^{\sigma}\to \frac{(\bm r/\bm\alpha)_m}{N}$ as $\sigma\to 0$. 
    By \eqref{Y4}, 
    $$
    S_i^{\sigma}\to \min\{(\bm r/\bm \alpha)_m\alpha_i,r_i\}=(\bm r/\bm\alpha)_m\alpha_i\quad \text{as} \quad \sigma\to0, \quad\forall\, i\in\Omega.
    $$
    This in turn implies that 
    $$
    \sum_{j\in\Omega}I_j=N-\sum_{j\in\Omega}S_j\to N-(\bm r/\bm \alpha)_m \quad\text{as}\quad \sigma\to 0.
    $$ 

    If $j\in\Omega\setminus\Omega^*$, then $(\bm r/\bm\alpha)_m\alpha_j<r_j$ and $I_i^{\sigma}\to 0$ as $\sigma\to 0$  by \eqref{Y3}. Next, observe that  
    \begin{equation}\label{Y5}
    I_j^{\sigma}=\frac{1}{\sigma}(l^{\sigma}N\alpha_j-r_j)_+=\frac{(l^{\sigma}N-(\bm r/\bm\alpha)_m)_+}{\sigma}\alpha_j,\quad \forall\ j\in\Omega^*.
    \end{equation}
    Hence,
    \begin{equation}\label{Y6}
\sum_{j\in\Omega^*}I_j^{\sigma}=\frac{(l^{\sigma}N-(\bm r/\bm\alpha)_m)_+}{\sigma}\sum_{j\in\Omega^*}\alpha_j.
    \end{equation}
    Since $\sum_{j\in\Omega\setminus\Omega^*}I_{j}^{\sigma}\to 0$ and $\sum_{j\in\Omega}I_j^{\sigma}\to N-(\bm r/\bm\alpha)_m$ as $\sigma\to 0$, we get from \eqref{Y6} that 
    $$
    \frac{(l^{\sigma}N-(\bm r/\bm\alpha)_m)_+}{\sigma}\to\frac{N-(\bm r/\bm\alpha)_m}{\sum_{j\in\Omega ^*}\alpha_j}\quad  \text{as}\quad \sigma\to0.
    $$ 
    We can invoke  \eqref{Y5} to conclude that 
    $$
    I_j^{\sigma}\to \frac{N-(\bm r/\bm\alpha )_m}{\sum_{i\in\Omega^*}\alpha_i}\alpha_j \quad \text{as} \quad\sigma\to 0,\quad \forall j\in\Omega^*.
    $$

    {\rm (ii)} We consider $\sigma\to \infty$ and show that {\rm (ii-1)}-{\rm (ii-3)} holds.

    {\rm (ii-1)} Suppose that $N<\sum_{j\in\Omega}r_j$. Then,  $$G_{\sigma}(l^{\sigma})=N<\sum_{j\in\Omega}r_j<G_{\sigma}\big( \frac{(\bm r/\bm\alpha)_M}{N}\big)$$
    and 
    $$
    G_{\sigma}\big(\frac{(\bm r/\bm\alpha)_m}{N}\big)= (\bm r/\bm\alpha)_m<N= G_{\sigma}(l^{\sigma}).
    $$ 
    It then follows from the monotonicity of $G_\sigma$ that 
    $$
    \frac{(\bm r/\bm\alpha)_m}{N}< l^{\sigma}<\frac{(\bm r/\bm\alpha)_M}{N}, 
    \quad\forall\;\sigma>0.
    $$ 
    Therefore,  
    $$
    I_i^{\sigma}=\frac{(l^{\sigma}N-r_i)_{+}}{\sigma}\alpha_i\to 0 
\quad \text{as}\quad \sigma\to \infty, \quad \forall\; i\in\Omega.
$$
Passing to a subsequence if necessary, there is $l^{\infty}\in[\frac{(\bm r/\bm\alpha)_m}{N},\frac{(\bm r/\bm\alpha)_M}{N}]$ such that $l^{\sigma}\to l^{\infty}$ and $\bm S^{\sigma}\to \min\{l^{\infty}N\bm \alpha,\bm r\}$ as $\sigma\to\infty$. Taking $\sigma\to \infty$ in $G_{\sigma}(l^{\sigma})=N$, we have 
    \begin{equation}\label{DFF2-2}
    N=\sum_{j\in\Omega}\min\{l^{\infty}N\alpha_i,r_j\}.
    \end{equation}
Since $N>(\bm r/\bm \alpha)_m$ and $\bm r\notin{\rm span}(\bm \alpha)$, 
 there is a unique $l^{\infty}\in \Big(\frac{(\bm r/\bm\alpha)_m}{N},\frac{(\bm r/\bm\alpha)_M}{N}\Big)$ satisfying \eqref{DFF2-2}.  So $l^{\infty}$ is independent of the chosen subsequence, which yields the desired result.

    {\rm (ii-2)} Suppose that $N=\sum_{j\in\Omega}r_j$. Similar to {\rm (ii-1)}, we can show that $l^{\sigma}\to l^{\infty}$, $\bm I^{\sigma}\to 0$, and $\bm S^{\sigma}\to \min\{l^{\infty}N\bm\alpha,\bm r\}$ as $\sigma\to\infty$, where $l^{\infty}$ is the unique positive solution of \eqref{DFF2-2} in $\Big[\frac{(\bm r/\bm \alpha)_m}{N},\frac{(\bm r/\bm\alpha)}{N}\Big]$. Now since $N=\sum_{j\in\Omega}r_j$,  we must have  $l^{\infty}=\frac{(\bm r/\bm\alpha)_M}{N}$, which yields the desired result.

    {\rm (ii-3)} Suppose that $N>\sum_{j\in\Omega}r_j$. Observing that
    $$
  N=  G_{\sigma}(l^{\sigma})\le \sum_{j\in\Omega}r_j+\frac{l^{\sigma}N}{\sigma}, \quad \forall\ \sigma>0,
    $$
   we have
    $$
    l^{\sigma}\ge \frac{\sigma(N-\sum_{j\in\Omega}r_j)}{N}\to \infty\quad \text{as}\quad \sigma\to\infty.
 $$    
    Thus by \eqref{DFF2-1},  
    $$
    N=\sum_{j\in\Omega}r_j+\frac{1}{\sigma}(l^{\sigma}N-\sum_{j\in\Omega}r_j),\quad \forall\,\sigma\gg 1.
    $$ 
    Hence, 
    $$
    l^{\sigma}=\frac{\sigma (N-\sum_{j\in\Omega}r_j)+\sum_{j\in\Omega}r_j}{N}, \quad\forall \, \sigma\gg 1.
    $$
    As a result, for each $j\in\Omega$, 
    $$
     I^{\sigma}_j=\frac{1}{\sigma}\Big( \Big(\sigma(N-\sum_{i\in\Omega}r_i)+\sum_{i\in\Omega}r_i\Big)\alpha_j-r_{j}\Big)_+
    \to \Big(N-\sum_{i\in\Omega}r_i\Big)\alpha_j\quad\text{as}\quad \sigma\to\infty$$ and 
    $$
    S_j^{\sigma}=\min\Big\{\Big(\sigma\Big(N-\sum_{i\in\Omega}r_i\Big)+\sum_{i\in\Omega}r_i\Big)\alpha_j,r_j\Big\}\to r_j\quad \text{as}\quad \sigma\to\infty.
    $$

\end{proof}

\section{Discussions}

We studied the dynamics of solutions of an  epidemic model \eqref{model-mass-action} with mass-action transmission mechanism in discrete-space environment. We defined the basic reproduction number $\mathcal{R}_0$ (formula \eqref{R-0}) and  studied the threshold dynamics of the model in terms of $\mathcal{R}_0$. In particular, when $\mathcal{R}_0\le 1$, we established results on the global stability of the DFE (Theorem \ref{T1-mass-action-DEF-stability}), and the existence and multiplicity of EE solutions (Theorems \ref{T2-mass-action-EE-Multiple}-\ref{theorem-ds}). When $\mathcal{R}_0>1$, we obtained results on the existence and uniqueness of EE solution (Theorem \ref{T2-mass-action-EE-Existence}). The global stability of the EE solution is  established under some additional hypotheses (Theorem \ref{T1-mass-action-EE-stability}).  In the second part of the manuscript, we  studied the impact of  restricting population movement on the dynamics of the disease by investigating the asymptotic profiles of the EE solutions as $d_S$ or/and $d_I$ approach zero (Theorems \ref{thm-ee-ds}, \ref{thm-ee-di}, and \ref{T6-ee-di-ds-small}). This study may help in taking informed decisions to implementing effective disease control strategies. 

Our results are novel in two aspects. On one hand, we drop the assumption that the connectivity matrix between the patches is symmetric. As a consequence, some of our results generalize  previous  results obtained by Li and Peng \cite{li2023sis}. On the other hand, some of our results are new even when the connectivity matrix is symmetric.
Firstly, the results on the global stability of DFE in Theorem \ref{T1-mass-action-DEF-stability}-{\rm (i)}\& {\rm (ii)} seem to be new. Secondly, the results on the existence of multiple EE solutions  are new. In simple epidemic models such as model \eqref{model-mass-action} with one patch, it is usually expected that the DFE is globally stable if $\mathcal{R}_0<1$.  Our Theorem \ref{T2-mass-action-EE-Multiple} indicates  that model \eqref{model-mass-action} with more than one patch may have multiple EE solutions and  the disease can persist  even if $\mathcal{R}_0<1$. This result highlights a major difference on the predictions of the disease dynamics from models using the standard transmission mechanism:  in model \eqref{model-si}, the disease always extinct if $\mathcal{R}_0<1$ \cite{allen2007asymptotic,chen2020asymptotic}. Another implication of this result is that the disease dynamics can be more complicated in patchy environment.

Our results are also interesting when using the dispersal rate $d_S$ or total population $N$ as bifurcation parameters. Notice that $\mathcal{R}_0$ is independent of $d_S$.  If $\mathcal{R}_0<1$, Theorem \ref{T1-mass-action-DEF-stability}-{\rm (ii)} states that the DFE is globally asymptotically stable if $d_S$ is sufficiently large; Theorem \ref{T2-mass-action-EE-Multiple} indicates  that it may be possible for the disease to persist  if $d_S$ is sufficiently small. If  $\mathcal{R}_0>1$,  there is some critical number $d_I^*=(1-m_{d_I})d_I$, where  $m_{d_I}\in (0, 1]$  depends on the dispersal rate $d_I$, such that the (unique) EE solution exists if and only if  $\mathcal{R}_0>1$  (Theorem \ref{T2-mass-action-EE-Existence}). Interestingly, $d_I^*$ is independent of $N$  and equals zero if $\bm r\in {\rm span}(\bm \alpha)$.  Moreover, by Theorem \ref{theorem-ds} and Remark \ref{R-1}, we have  $d^*_I>0$ if $\sum_{j\in\Omega}\gamma_j/\beta_j<(\sum_{j\in\Omega}\alpha_j\gamma_j)/(\sum_{j\in\Omega}\beta_i\alpha_j^2)$ and $d_I$ is  large.  It appears to be a difficult problem to understand the dependence of $d_I^*$ on $d_I$. It is worth pointing out that when the connectivity matrix $\mathcal{L}$ is symmetric, \cite[Theorem 4-{\rm (i)} $\&$ {\rm (iii)} ]{li2023sis} established the existence and uniqueness of the EE when $\mathcal{R}_0>1$ and $d_S>(1-|\Omega|/(\sum_{j\in\Omega}\beta_j)^{1/3}(\sum_{j\in\Omega}\beta_j^{-1/2})^{2/3})d_I$. Hence, when $\mathcal{L}$ is symmetric we have an apriori upper bound for $d_I^*$:
$$
d_I^*\le\Big(1-|\Omega|/(\sum_{j\in\Omega}\beta_j)^{1/3}(\sum_{j\in\Omega}\beta_j^{-1/2})^{2/3}\Big)d_I.
$$
We also note that $\mathcal{R}_0$ is strictly decreasing in total population $N$. If $N$ is sufficiently small ($\mathcal{R}_0\le 1$ in this case),  Theorem \ref{T1-mass-action-DEF-stability}-{\rm (i)} shows that the DFE is globally asymptotically stable. If $N\in (\sum_{j\in\Omega}\gamma_j/\beta_j<(\sum_{j\in\Omega}\alpha_j\gamma_j)/(\sum_{j\in\Omega}\beta_i\alpha_j^2))$ and $\mathcal{R}_0<1$, model \eqref{model-mass-action} has multiple EE solutions. When limiting the movement of susceptible people, $N$ also plays an important role according to Theorem \ref{thm-ee-ds}:  the disease can be eliminated only when $N<\sum_{j\in\Omega}r_j$. This observation is consistent with the results in \cite{WuZou,castellano2022effect,li2023sis}.

When the disease eventually persists, it is critical to develop and implement adequate control strategies to alleviate the impact of the disease. One of the common disease control strategy is to restrict population movement, which was the case during the last global Covid-19 pandemic. One way to assess the effectiveness of such disease control strategies from a mathematical point of view is to study the profiles of the EE  with respect to small dispersal rates of the population.  In this direction, we first fixed the dispersal rate of the infected population and then examined the limit of EE as $d_S\to 0$. Our result in Theorem \ref{thm-ee-ds} indicates that if the total size of the population is below some critical number, then the disease can be significantly controlled by lowering the dispersal rate of the susceptible population. However, if the total population size exceeds this critical number, then disease may still persist even if the dispersal rate of the susceptible population is reduced. In fact, Proposition \ref{thm-ee-ds-2} confirms that the disease will persist if the total population size is large and the dispersal rate of the susceptible population is significantly small. Next, we fixed the diffusion rate of the susceptible population and then investigated the profiles of the EE  as $d_I\to 0$. Theorem \ref{thm-ee-di} suggests that if the dispersal rate of the susceptible population is small and the dispersal rate of the infected is significantly smaller, then the infected population will concentrate exactly on the  highest-risk patches.  Third, we restricted simultaneously the movement rates of both susceptible and infected people and  examined the profiles of the EE solutions in Theorem \ref{T6-ee-di-ds-small}. This result shows that the ratio of the dispersal rates play an important role on the dynamics of the disease.

\bibliographystyle{plain}
\bibliography{epidem}

\end{document}